\newtheorem{Theorem}{Theorem}[section]
\newtheorem{Proposition}[Theorem]{Proposition}
\newtheorem{Corollary}[Theorem]{Corollary}
\newtheorem{Lemma}[Theorem]{Lemma}
\newtheorem{Remark}[Theorem]{Remark}
\newtheorem{Assumption}[Theorem]{Assumption}
\newcommand{\R}{{\mathbb R}}
\newcommand{\N}{{\mathbb N}}
\newcommand{\Z}{{\mathbb Z}}
\newcommand{\F}{{\mathcal F}}
\newcommand{\E}{{\mathbb E}}
\title{The Markov approximation of the periodic multivariate Poisson autoregression}
\author{Mahmoud Khabou\footnote{Corresponding author: m.khabou@imperial.ac.uk}, Edward A. K. Cohen and Almut E. D. Veraart \\ \\
Department of Mathematics, \\ Imperial College London, \\ 180 Queen's Gate, \\ London, SW7 2AZ, \\ United Kingdom}
\date{}
\begin{document}

\maketitle
\abstract{This paper introduces a periodic multivariate Poisson autoregression with potentially infinite memory, with a special focus on the network setting. Using contraction techniques, we study the stability of such a process and provide upper bounds on how fast it reaches the periodically stationary regime. We then propose a computationally efficient Markov approximation using the properties of the exponential function and a density result. Furthermore, we prove the strong consistency of the maximum likelihood estimator for the Markov approximation and empirically test its robustness in the case of misspecification. Our model is applied to the prediction of weekly Rotavirus cases in Berlin, demonstrating superior performance compared to the existing PNAR model.}

\noindent
{\it Keywords: Multi-variate count time series, periodicity, Markov approximation, strong consistency, likelihood estimation}\\
{\it MSC:}
62M10, 
62F12 
\section{Introduction}
With the recent surge in the availability of integer-valued data, there has been a growing interest in the modelling and inference of count time series. Some early contributions to count time series include discrete ARMA (DARMA) processes, see \cite{JL1978a,JL1978b}, and  the $\{0,1\}-$valued $g-$functions introduced by Berbee \cite{Berbee}. Also, the idea of using thinning operations, see \cite{SvH1979}, for model construction is now very popular; e.g.~the  thinning-based INteger AutoRegressive models of order $p$ (INAR($p$))  were introduced by Al-Osh and Alzaid \cite{AA}, and were subsequently extended to the multivariate case by Latour \cite{Latour} and to infinite order by Kirchner \cite{Kirchner}.

More recently, there has been a growing interest in observation driven models; processes for which the dynamics is determined by their past values and a random component. One of the first examples of count observation models is the INteger Generalised AutoRegressive Conditional Heteroskedastic (INGARCH($p,q$)) model introduced in \cite{FLO}, which will serve as a building block for the models studied in this paper. A subset of these models, referred to as Poisson autoregressions, have been thoroughly studied in the literature \cite{DFT,FoTj, FRT}, where sufficient stability conditions have been established and statistical inference methods have been tested.

In \cite{mahmoud,coutin2024functionalapproximationmarkedhawkes}, Poisson autoregressions were shown to be discrete-time versions of Hawkes processes, a class of self-exciting (or inhibiting) continuous-time count processes. Since, in practice, data is often recorded on regular discrete-time intervals (\textit{e.g.} high-frequency financial data), Poisson autoregressions can be seen as a count-data-adapted version of Hawkes processes, thus retaining their usefulness in many fields such as finance \cite{EGG,BJM,ELL}, neurosciences \cite{BONNET2022109550,FPE,RRT} and social networks \cite{CS,BBGM}. Poisson autoregressions have also been applied to crime data \cite{kaur2024latentspacemodelmultivariate} and epidemiology \cite{kaur2024dynamiclatentspacetime}.

Theses applications often involve a number of interacting components (e.g. spiking neurons in the brain, shared posts on a social network) which raises the need for multivariate models. For the thinning based models, \cite{Latour}  studied a multivariate INAR($p$) process which has since been used by \cite{Kirchner_multi} for the approximate estimation of multivariate Hawkes processes using conditional least squares. For the observation driven models, Fokianos \textit{et al.} \cite{FSTD} studied both the stability and estimation of multivariate Poisson autoregressions using a Markov chain perturbation approach.  The stability condition for such autoregressions has been significantly improved by Debaly and Truquet \cite{DT}. We refer the reader to \cite{almut} for a study of multivariate (continuous-time) trawl processes and to  \cite{Fokianos_multi} for a survey on multivariate count series.

One of the challenges of general multivariate models is that they involve interaction terms between all of the components and hence the number of parameters increases rapidly with the number of particles. This means that simulation and inference can be quite infeasible for systems of a large number of components. This motivated more parsimonious approaches, which led to the incorporation of a network structure (a graph with an adjacency matrix). Building on the Network AutoRegressive (NAR) studied by Zhu \textit{et al.} \cite{Zhu} and their generalisation to $r-$stage neighbours (Knight \textit{et al.} \cite{knight2016modellingdetrendingdecorrelationnetwork}), Armillotta and Fokianos \cite{ArFok} proposed the Poisson NAR (PNAR) model for network count series. The model can be described as follows: Consider a network of $d$ nodes whose neighbourhood structure is described by an adjacency matrix $M=(m_{ij})_{i,j=1,\cdots,d}$, where $m_{ij}=1$ if there is a directed edge from node $i$ to node $j$. To each node $i$ is associated a time series of counts $(Y^{(i)}_t)_{t\in \N}$ that evolves according to the dynamics 
$$Y^{(i)}_t|\mathcal F^Y_{t-1} \sim \text{Pois}(\lambda^{(i)}_t),$$
$\mathcal F^Y$ here being the filtration associated with the network count variables. The autoregressive aspect comes from the fact that the intensity $\lambda^{(i)}_t$ is modelled as
\begin{equation}
\label{eq:PNAR}
\lambda^{(i)}_t=\mu+ \sum_{k=1}^{q}\alpha_{k}Y^{(i)}_{t-k}+ \sum_{k=1}^{q}\beta_k \frac{1}{\sum_{j=1}^d m_{ij}}\sum_{j=1}^d m_{ij}Y_{t-k}^{(j)},
\end{equation}
where regression coefficients $(\alpha_k)_{k\in1,...,q}$ are called the momentum kernel and coefficients $(\beta_k)_{k\in1,...,q}$ are called the network kernel. Assuming that these kernels are positive, the occurrence of a non-zero count $Y^{(i)}_{t-1}$ will increase the intensity $\lambda^{(i)}_t$ (as well as $\lambda^{(j)}_t$ for any node $j$ impacted by node $i$), which in turn means that $Y^{(i)}_{t}$ (and $Y^{(j)}_t$ for nodes $j$ impacted by $i$) are more likely to take larger values. The stability conditions for this process, both at the $t\to +\infty$ and $d \to +\infty$ limits, are provided in \cite{ArFok}.

We notice nevertheless that many real-life networks exhibit a seasonal (or periodic) behaviour that cannot be captured by the PNAR($q$) model because its coefficients are time invariant. For example,  the spiking behaviour of neurons changes drastically between night and day, or the number of flu cases varies across the seasons. As highlighted in \cite{BG}, systematically neglecting periodicity in financial time series leads to a loss in forecasting efficiency. We also note that classical differencing techniques used to remove seasonality before modeling are unsuitable in the count setting as the resulting differenced process will have negative values. 

We therefore propose a new model for count network autoregressions whose coefficients also vary periodically in time, that is, where the regression coefficients $(\alpha_k)_{k\in 1,\cdots, q}$ and $(\beta_k)_{k\in 1,\cdots, q}$ are replaced by the sequences $(\alpha^{(t)}_k)_{k\in 1,\cdots, q}$ and $(\beta^{(t)}_k)_{k\in 1,\cdots, q}$ that are $p-$periodic in $t$, for some integer $p$ that represents the period of the seasonality. For instance, $p=12$ months for the monthly number of storms in a given geographical area, or $p=24$ for the hourly number of posts on a social network. Because of this variation, classical concepts such as stationarity and ergodicity no longer apply and are replaced by the analogous periodic counterparts. Such concepts have been initially studied in the signal processing literature \cite{BoylesGardner,GARDNER2006639} under the name of ``cyclo-stationarity'' and ``cyclo-ergodicity'', but have since caught the attention of the time series community. To capture the seasonalities in volatility, \cite{BG} proposed a periodic ARCH model which has been used by \cite{RZ} to model the variations of gas prices. For a general book on periodic time series we refer the reader to \cite{FP}, and to \cite{Aknouche} for an article on periodic time series applied to stochastic volatility. More recently, Aknouche \textit{et al.} \cite{ABD} studied the probabilistic properties of a periodic Poisson INGARCH $(1,1)$ model, which paved the way to the study of periodic count series \cite{almohaimeed, Almohaimeed_2023,BB, SPS}.

Another property of the PNAR$(q)$ model \eqref{eq:PNAR} is its finite (and in practice short, \textit{cf.} \cite{ArFok}) memory. The fact that the intensity depends on the $q$ last counts guarantees that its correlation decays quickly and ensures that simulation from the model and computation of the likelihood function of a sample of length $T$ is $O(qT)$. However, empirical studies show that neuron spike processes \cite{locherbach}, as well as order book dynamics \cite{HBB,BJM}, have an infinite memory that sometimes decays slowly as a power law. This renders the standard PNAR$(q)$ model inadequate for capturing the long-memory properties of these processes. Of course, the immediate solution would be to work with a PNAR$(\infty)$ model, but this solution comes at a cost: the simulation and likelihood computation for a sample of length $T$ is $O(T^2)$, and non-parametric estimation is effectively impossible because there is an infinite amount of coefficients to estimate. In this paper, we use the separability property of the exponential kernel to propose a Markov approximation of the PNAR$(\infty)$ model which ensures the cost of simulation and likelihood computation is linear. 

The article is organised as follows: in Section \ref{sec:model} we introduce a general multivariate periodic Poisson autoregression, for which we distinguish two types of periodicity. Expanding the results proven in \cite{DT,DW}, we give sufficient conditions for the stability of multivariate periodic autoregressions in Section \ref{section:stability}. These are then applied to the periodic multivariate Poisson autoregression to yield upper bounds on the speed of convergence of the aforementioned process to its periodically stationary regime. In Section \ref{sec:approx} we prove the continuity of the multivariate Poisson autoregression with respect to its kernel, which we then use alongside a density lemma to give a Markov approximation. Section \ref{sec:inference} deals with inference for infinite memory autoregressions, in which we first show the strong consistency of the maximum likelihood estimator (MLE) for models with exponential polynomial kernels, and then empirically examine its performance for misspecified processes. In Section \ref{sec:ral_data} we apply our model to the forecast of the weekly number of cases of Rotavirus in Berlin between 2001 and 2015, comparing it to the existing PNAR model introduced in \cite{ArFok}.
\section{The model}
\label{sec:model}
\subsection{Notations}
Throughout this article, $\Z$, $\N$ and $\N^*$ denote the sets integers, non-negative integers and positive integers, respectively. Superscripts are denoted between brackets to be distinguished from powers.

Let $\left({N}_t(\cdot) \right)_{t \in \N^*}=\left({N}^{(1)}_t(\cdot),\cdots,N^{(d)}_t(\cdot) \right)_{t \in \N^*}$ be a family of independent and identically distributed \textit{(iid)} unit intensity Poisson processes of dimension $d$, with $\mathcal F=\left(\mathcal F_t \right)_{t\in \N^*}$ their associated filtration. More specifically, for a given $X=(X^{(1)},\cdots,X^{(d)})\in \R_+^d$, $N_t(X)$ counts the number of points in $[0,X^{(1)}]\times \cdots \times [0,X^{(d)}]$. Unlike \cite{FSTD} who use copula Poisson processes, we assume that the different components of $N_t$ are independent. For two vectors $X=(X^{(1)},\cdots,X^{(d)})$ and $Y=(Y^{(1)},\cdots,Y^{(d)})$ in $\R^+$, $|X|$ denotes the vector $(|X^{(1)}|,\cdots,|X^{(d)}|)$ and $\preceq $ denotes the partial order $X\preceq Y \iff X^{(i)} \leq Y^{(i)}$ for all $i=1,\cdots,d$. The relation $\preceq$ is naturally extended to matrices.
The vector of $\R ^d$ that contains $1$ in every component is denoted by $\mathds 1$. 
Let $\psi:\R \mapsto \R_+$ be an $L-$Lipschitz function, assumed to be increasing. For $X\in \R^d, \psi(X):=\left(\psi(X^{(1)}),\cdots,\psi(X^{(d)})\right)$. We refer to $\psi$ as the \textit{jump-rate} function.\\
In what follows, $p\in \N^*$ is a fixed period. When we say that a sequence $(u_t)_{t\in \Z}$ is periodic, we mean $u_{t+p}=u_t$ for all $t\in \Z$. 

Our model describes a $d-$dimensional count time series $\left(Y_t\right)_{t\in \N}$, that is a time series that takes values in $\N^d$. We assume that $Y$ is measurable with respect to $\mathcal F$ and we denote its filtration by $\mathcal F ^Y$. For the count series $Y$, the intensity $\left(\lambda_t\right)_{t \in \N}$ denotes $\lambda_t = \E \left [Y_t\big | \mathcal F^Y_{t-1} \right]$.

We will now present autoregressive count series with two types of periodicity. 
\subsection{Type I periodic Poisson autoregression}
Given an initial  vector of counts $Y_0\in \N^d$, the multivariate Poisson autoregression is constructed recursively:
\begin{equation}
\label{def:TypeI}
    \begin{cases}
        Y_t&=N_t\left(\lambda_t\right),\\
        \lambda_t &= \psi \left(\mu_t+\sum_{k=1}^{t-1}\phi^{(t)}_{t-k}Y_k\right)\\
        &=\psi \left(\mu_t+\sum_{k=1}^{t-1}\phi^{(t)}_{k}Y_{t-k}\right)
    \end{cases}
        \text {for } t \in \N^*,
\end{equation}
where $(\mu_t)_{t \in \Z}$ is periodic family of baseline pre-intensities and $(\phi^{(t)}_k)_{t \in \Z, k \in \N^*}$ is a periodic (in $t$) family of $d\times d$ matrices that encodes the impact of the counts of lag $k$ at time $t$, called the \textit{kernel}. If the terms of $\phi^{(t)}_k$ are positive, a non-zero number of counts $Y_{t-k}$ increases the sum in \eqref{def:TypeI}, resulting in an increase in $\lambda_{t+1}$, \textit{ceteris paribus}, thus giving a higher likelihood of observing a non-zero vector of counts $Y_{t+1}$. This means that the network is overall self-exciting. Similarly, if the kernel matrices have negative entries, the network becomes self-inhibiting overall. Writing \eqref{def:TypeI} component-wise yields 
\begin{equation}
    \begin{cases}
        Y^{(i)}_t&=N^{(i)}_t\left(\lambda^{(i)}_t\right),\\
        \lambda^{(i)}_t &= \psi \left(\mu^{(i)}_t+\sum_{k=1}^{t-1}\sum_{j=1}^d\left(\phi^{(t)}_{t-k}\right)_{ij}Y^{(j)}_k\right)\\
        &=\psi \left(\mu^{(i)}_t+\sum_{k=1}^{t-1}\sum_{j=1}^d(\phi^{(t)}_{k})_{ij}Y^{(j)}_{t-k}\right)
    \end{cases}
        \text {for } t \in \N^*,
\end{equation}
showing $(\phi^{(t)}_{k})_{ij}$ quantifies the impact of node $j$ on node $i$ at time $t$ at lag $k$. If $(\phi^{(t)}_{k})_{ij}=0$, then node $j$ has no \textit{direct} impact on node $i$, although it can impact it indirectly via other nodes. 

The model can be seen as a non-linear, infinite memory and multivariate generalisation of the periodic INGARCH$(p,0)$ process recently introduced by \cite{Almohaimeed_2023}. We refer the reader to \cite{aknouche:tel-04553687} for a survey on periodic ARCH time series (in French). 

Throughout this article, we focus on the network setting, with a deterministic and constant neighbourhood structure. Based on continuous-valued network autoregressive networks models \cite{JSSv096i05, Zhu}, Armillotta and Fokianos introduced a network count autoregressive model \cite{ArFok} called the PNAR model. We deal here with a periodic and infinite memory extension of the aforementioned autoregression.

We consider a network with a fixed adjacency matrix $M=(m_{ij})_{i,j=1,\cdots,d}$, that is $m_{ij}=1$ if there is a directed edge from node $i$ to node $j$, and $m_{ij}=0$ otherwise. We impose that a node is not connected to itself, that is $m_{ii}=0$ for all $i=1,\cdots,d$. For a node $i$, the out-degree is defined as the total number of nodes that influence $i$, i.e. $n_i=\sum_{j=1}^d m_{ji}$. The Type I periodic network Poisson autoregression is given by 
\begin{equation*}
    \begin{cases}
        Y^{(i)}_t&=N^{(i)}_t(\lambda_t^{(i)})\\
        \lambda^{(i)}_t&=\psi \left(\mu^{(i)}_t+\sum_{k=1}^{t-1}\alpha^{(t)}_{t-k}Y^{(i)}_k+\beta^{(t)}_{t-k}\frac{1}{n_i}\sum_{j=1}^d m_{ij}Y^{(j)}_k\right)
    \end{cases},
\end{equation*}
where $(\alpha^{(t)}_k)$ and $(\beta^{(t)}_k)$, the momentum and network kernels, respectively, are scalar sequences that are periodic in $t$.

To cast this autoregression in vector form, we define $W=\text{diag}(n_1^{-1},\cdots, n_d^{-1})M$ to be the normalised adjacency matrix and take $$\phi^{(t)}_{t-k}=\alpha^{(t)}_{t-k}I_d + \beta^{(t)}_{t-k}W$$
in \eqref{def:TypeI}. The model can be further extended to allow for the interaction with indirect neighbours (neighbours of neighbours and so on) following the work of \cite{JSSv096i05}. We refer the reader to \cite{liu2023newmethodsnetworkcount} for a recent survey on the different methods for network count time series.

Note, the periodicity introduced in \eqref{def:TypeI} is in the current time $t$ and not the lag $k$. This is not the case for the Type II periodicity, which we now introduce.
\subsection{Type II periodic Poisson autoregression}
We now consider a model for which the season of the lagged counts is what matters in determining the effects on the autoregression. 
Given a first vector of counts $Y_0\in \N^d$, the multivariate Poisson autoregression is constructed recursively:
\begin{equation}
\label{def:TypeII}
    \begin{cases}
        Y_t&=N_t\left(\lambda_t\right),\\
        \lambda_t &= \psi \left(\mu_t+\sum_{k=1}^{t-1}\phi^{(k)}_{t-k}Y_k\right)\\
        &=\psi \left(\mu_t+\sum_{k=1}^{t-1}\phi^{(t-k)}_{k}Y_{t-k}\right)
    \end{cases}
        \text {for } t \in \N.
\end{equation}
As is the case for Type I seasonality, $(\mu_t)_{t \in \Z}$ is a periodic family of baseline pre-intensities and $(\phi^{(t)}_k)_{t \in \Z, k \in \N^*}$ is a periodic (in $t$) family of $d\times d$ matrices. Unlike Type I periodicity, Type II periodicity has not been thoroughly studied in the literature. However, the two types are still different. To illustrate the difference, we consider a 12-node network generated according to the Stochastic Block Model (SBM) with two communities, the first containing 4 nodes and the second containing 8 nodes. The probabilities $p_{ij}$ of a node from community $i$ forming an edge with a node from community $j$ are $p_{11} = 0.8$, $p_{22}=0.7$ and $p_{12} = p_{21} = 0.1$. The kernels are
\begin{equation}
\label{eq:example_kernel}
    \begin{cases}
        \alpha^{(t)}_k&=0.3 \frac{8\cdot \mathds 1_{t=0\text{ mod }[4]}+0.2 \cdot\mathds 1_{t\neq 0\text{ mod }[4]}}{k^2}\mathds 1_{k\leq 10},\\
        \beta^{(t)}_k&=0.2 e^{-3k}\left(1+\cos(t \pi/2)\right),
    \end{cases}
\end{equation}
and baseline pre-intensity is $\mu= 0.4$.

 A simulation of the Poisson autoregression with periodicity of Type I for the third node is given in Figure \ref{fig:type_I}.
\begin{figure}[h!]
    \centering
    \includegraphics[width=1\linewidth]{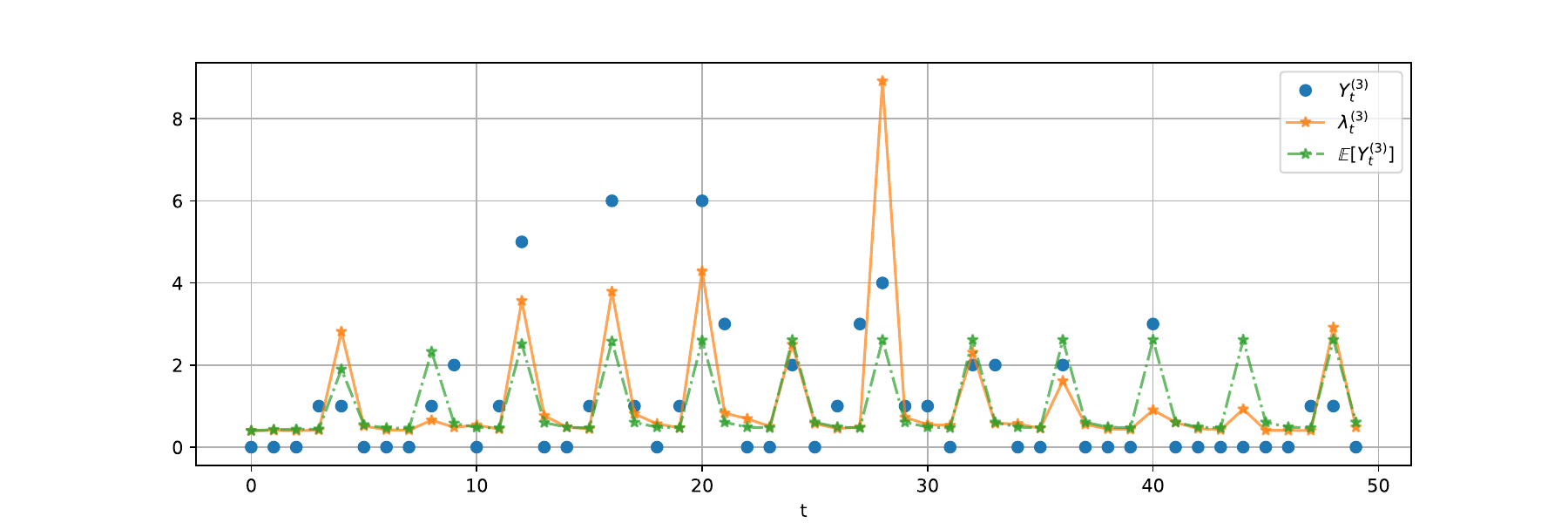}
    \caption{The counts drop significantly when $t$ is not divisible by 4.}
    \label{fig:type_I}
\end{figure}
Type II periodicity, on the other hand, means that the impact depends on the past seasons rather than the current season, as illustrated in Figure \ref{fig:type_II}. The two types of periodicity differ in the following way:
\begin{enumerate}
    \item For Type I periodicity, the network remembers its past and regresses on it now according to weights that depend on the current season. 
    \item For Type II periodicity, the network remembers its past activity weighted by coefficients that depend the past seasons, then aggregates it.
\end{enumerate}
 To the best of our knowledge, the difference between the two types of periodicity has not been explicitly studied in the literature. However, we point out that Maillard and Wintenberger \cite{MW} mentioned that their model of autoregression with random coefficients can either be applied to $\phi^{(t)}_{t-k}$ (analogous to Type I periodicity) or $\phi^{(k)}_{t-k}$ (analogous to Type II periodicity).
\begin{figure}[h!]
    \centering
    \includegraphics[width=1\linewidth]{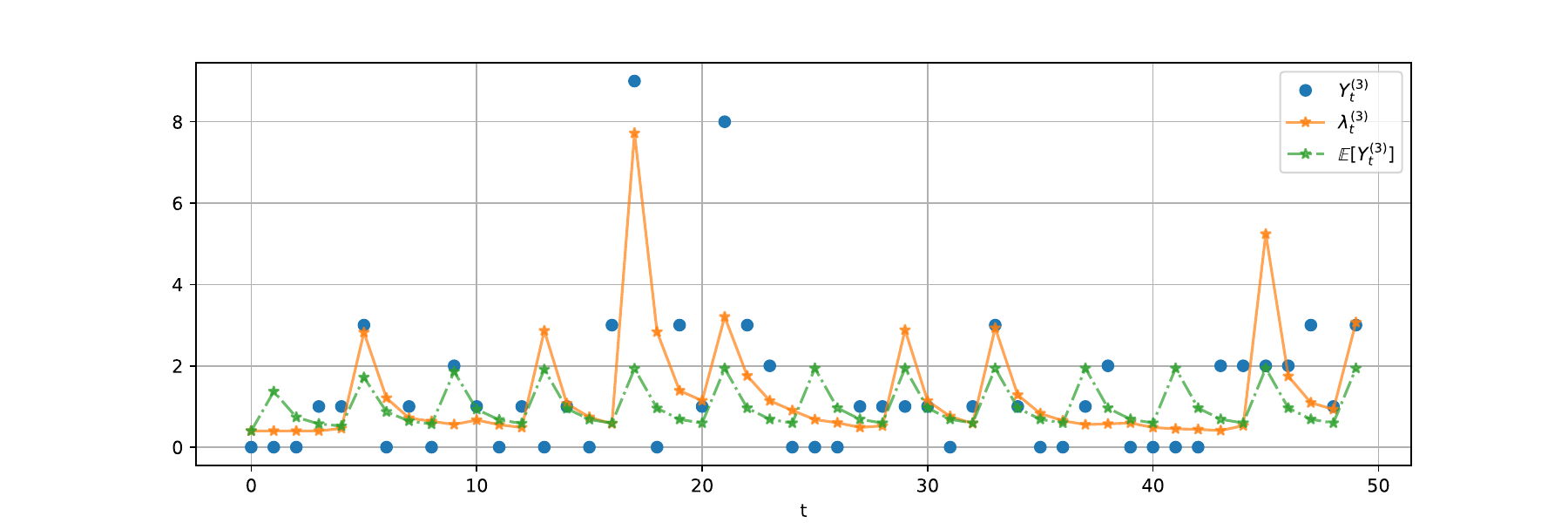}
    \caption{The peak of the average number of counts no longer coincides with the instants $t=0\text{ mod}[4]$, but occurs right after. We can also see that it decreases more slowly from its peak than in the Type I periodicity.}
    \label{fig:type_II}
\end{figure}

\begin{Remark}
    We point out that we treat the intensity $\lambda_t$ as the projection of the count variable $Y_t$ on the $\sigma-$algebra $\mathcal F^Y_{t-1}$. However, since the knowledge of the underlying randomness $\mathcal F_{t-1}=\sigma \left( N_1,\cdots,N_{t-1}\right)$ yields the information on the counts $\mathcal F^Y_{t-1}$, we have that $$\lambda_t=\E \left[Y_t|\mathcal F_{t-1}\right].$$
    Throughout the paper, we prefer conditioning on $\mathcal F^Y_{t-1}$ as it contains the information of the last observed counts. However, when we deal with couplings from the same underlying randomness (\textit{cf.} the next section), conditioning on $\mathcal F_{t-1}$ is preferred.  
\end{Remark}
\section{Periodic stability of the model}
\label{section:stability}
We seek to prove that both types of multivariate periodic Poisson autoregression converge to a periodically stationary and periodically ergodic solution if they are started from an arbitrary point. Proving this result by the direct application of the contraction argument presented in \cite{DW} is not optimal for our model, and this is for two reasons:
\begin{enumerate}
    \item The Lipschitz condition (3.1) in the aforementioned reference is given with respect to a reference Orlicz norm on the Banach space, which can be too strong for autoregressions on $\R^d$. Indeed, a sequence of matrices $(A_k)_{k\in \N}$ can satisfy $\rho(\sum_{k\geq 1}A_k)<1$ while having $\sum_{k \geq 1} \|A_k\|>1$ for the usual operator norms.
    \item Since our iterations are periodic, the contraction should be verified on a period, which means that the Lipschitz condition should be obtained for $p$ iterations of the autoregression function.
\end{enumerate}
This is why we resort to proving the different results for the periodic finite memory multivariate autoregressions, which we then extend to the infinite memory setting.
\subsection{General results for periodic infinite memory autoregressions}
\label{sec:general_results}
We study the periodic multivariate infinite autoregression 
\begin{equation}
    \label{eq:inf_reg}
    X_t=f_t \left(X_{t-1},X_{t-2},\cdots; \zeta_t \right),
\end{equation}
where $(f_t)_{t \in \Z}$ is a periodic sequence of functions from $(\R^d)^{\N}$ to $\R^d$ and $(\zeta_t)_{t\in \Z}$ is an independent and periodically distributed (\textit{i.p.d}) sequence of random variables defined on some measurable space $E$, that is $\zeta_{t+np}\stackrel{d}{=}\zeta_{t}$ for any $t \in \Z$ and $n\in \N$.

We seek to generalise the results proven in \cite{DT} in three directions: (i) the autoregression functions considered here are periodic; (ii) the autoregression can have infinite memory; (iii) the solution started at a random point is shown to converge towards the stationary regime with a given speed. To do so, we require the following stability assumptions.
\begin{Assumption}
\label{ass:contraction}
    There exists a family of nonnegative matrices $(A_k)_{k \in \N^*}$ satisfying the inequality $\rho\left( \sum_{k=1}^{+\infty}A_k\right)<1$ such that for all $v=1,\cdots,p$ we have
    $$\E \left [|f_v(x_1,x_2,\cdots;\zeta_v)-f_v(x'_1,x'_2,\cdots;\zeta_v)|_{ } \right] \preceq \sum_{k=1}^{+\infty} A_k |x_k-x'_k|_{ },$$
    for any $(x_1,x_2,\cdots)$ and $(x'_1,x'_2,\cdots)$ in $(\R^d)^{\N}$. Furthermore, assume that 
    $$\E [|f_v(0,0\cdots;\zeta_v)|_{ }] <+\infty.$$
\end{Assumption}

We follow the proofs in \cite{DT}, generalising them for the case of a periodic process with infinite memory. We start by proving periodic stationarity for the finite memory approximation. 
The $m-$truncated regression is defined by the equation 
\begin{equation}
    \label{eq:finite_reg}
    X^{(m)}_t=f_t \left(X^{(m)}_{t-1},X^{(m)}_{t-2},\cdots,X^{(m)}_{t-mp},0,\cdots; \zeta_t \right).
\end{equation}
We now introduce a periodic multivariate contraction condition that depends on the order $m$.
\begin{Assumption}
    \label{ass:contraction_periodic}
    There exists $p$ families of non-negative matrices $(A^{(v)}_k)_{k\in \N^*}$ such that for all $v=1,\cdots,p$ we have
    $$\E \left [|f_v(x_1,x_2,\cdots,x_{mp},0,\cdots;\zeta_v)-f_v(x'_1,x'_2,\cdots,x'_{mp},0,\cdots;\zeta_v)|_{ } \right] \preceq \sum_{k=1}^{mp} A^{(v)}_k |x_k-x'_k|_{ }.$$
    For $v=1,\cdots,p$, let $$\Gamma_v = \begin{pmatrix}
        A^{(v)}_1 & A^{(v)}_2 & \cdots &A^{(v)}_{mp-1} &A^{(v)}_{mp}\\
    1 & 0 & \cdots & 0 &0 \\
    \vdots & \ddots & \ddots & \vdots & \vdots \\
    0 & 0 & \cdots & 1 &0 
    \end{pmatrix}$$
    be the companion matrix of $(A^{(v)}_k)_{k=1,\cdots,mp}$. Assume that $\rho \left (\Gamma_p \Gamma_{p-1}\cdots \Gamma_1 \right)<1.$ Furthermore, assume that 
    $$\E [|f_v(0,0\cdots;\zeta_v)|_{ }] <+\infty.$$
\end{Assumption}
Note that Assumption \ref{ass:contraction} is stronger than Assumption \ref{ass:contraction_periodic}, in the sense that the first condition imposes that the function $f_t$ is contractive along every season, whereas the second only imposes that we have a contraction over a period; \textit{cf.} \cite{ABD} for a discussion about this condition in the context of a simpler autoregression. 

More rigorously, if Assumption \ref{ass:contraction} holds, then we have that for any given $m \in \N^*$, $\rho\left(\sum_{k=1}^{mp}A_k\right)<1 $. Lemma 1 in \cite{DT} guarantees that $\rho(\Gamma_p\Gamma_{p-1}\cdots \Gamma_1) = \rho(\Gamma^p)<1$ where 
$$\Gamma= \Gamma_v = \begin{pmatrix}
        A_1 & A_2 & \cdots &A_{mp-1} &A_{mp}\\
    1 & 0 & \cdots & 0 &0 \\
    \vdots & \ddots & \ddots & \vdots & \vdots \\
    0 & 0 & \cdots & 1 &0 
    \end{pmatrix}.$$

Before proving the stability results for the finite memory approximation, we recall the definitions of periodic stationarity and periodic weak dependence. We say that the process $(X_t)_{t\in \Z}$ is periodically stationary (resp.~periodically weakly dependent) if the seasonally embedded vector \cite{Glad,TG} $$\left(Z_n=\left(X_{np+p},X_{np+p-1},\cdots,X_{np+1}\right)\right)_{n\in \Z}$$
is stationary (resp.~weakly dependent \cite{DP}) in the usual sense. In particular, periodic stationarity means that the distribution of 
$(X_t)_{np+p}$ is invariant under any shift that is a multiple of the period $p$. We refer the reader to \cite{Aknouche} for an in-depth discussion of those concepts.  
\begin{Proposition}
\label{prop:truncate}
    Let $m\in\N^*$ and $(\zeta)_{t\in \Z}$ be an i.p.d family of random variables. Under Assumption \ref{ass:contraction_periodic}, the regression \eqref{eq:finite_reg} has a unique periodically stationary and periodically weakly dependent solution $(\tilde X^{(m)}_t)_{t\in \Z}$. Moreover, if for a given fixed history $(x_{0},x_{-1},\cdots)$, we set $ X^{(m)}_t=x_{t}$, for $t\leq 0$, and 
    $$ X^{(m)}_t=f_t \left( X^{(m)}_{t-1}, X^{(m)}_{t-2},\cdots, X^{(m)}_{t-mp},0,\cdots; \zeta_t \right), \quad \text{for }t>0,$$
    then there exists $C>0$ and $r\in (0,1)$ such that
    $$\E \left [|\tilde X^{(m)} _t - X^{(m)}_t|_{ }\right] \preceq C r^{t}, \quad \text {for $t \geq 0$}.$$
\end{Proposition}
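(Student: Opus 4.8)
The plan is to reduce the $m$-truncated infinite-memory recursion to a \emph{finite-dimensional, periodically driven Markov recursion} via the seasonally-embedded companion form, and then apply a contraction argument on a full period. First I would set $N=mp$ and introduce, for $t\in\Z$, the stacked vector $U_t=(X^{(m)}_t,X^{(m)}_{t-1},\dots,X^{(m)}_{t-N+1})^\top\in(\R^d)^{N}$. With this notation the recursion \eqref{eq:finite_reg} becomes $U_t=F_t(U_{t-1};\zeta_t)$ for a periodic family $(F_t)$, where the first block of $F_t$ is $f_t$ applied to the components of $U_{t-1}$ and the remaining blocks just shift coordinates. Composing over one period, $V_n:=U_{np}$ satisfies a genuine \emph{time-homogeneous} recursion $V_n=G(V_{n-1};\xi_n)$ with $\xi_n=(\zeta_{np+1},\dots,\zeta_{(n+1)p})$ i.i.d., because the $\zeta$'s are i.p.d.; this is exactly the seasonally embedded vector $Z_n$ from the definition recalled before the statement.

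Next I would establish the key contraction estimate on the period map. Iterating the Lipschitz bound in Assumption~\ref{ass:contraction_periodic} $p$ times and tracking the coordinate-shift structure, one gets
$$\E\bigl[\,|G(v;\xi)-G(v';\xi)|\,\bigr]\preceq \Gamma_p\Gamma_{p-1}\cdots\Gamma_1\,|v-v'|$$
componentwise, where $\Gamma_v$ is the companion matrix defined in the assumption. Since $\rho(\Gamma_p\cdots\Gamma_1)<1$ by hypothesis, there is a positive left eigenvector $w$ (or, after passing to a suitable power $\ell$ so that $\|(\Gamma_p\cdots\Gamma_1)^\ell\|$ is small in a weighted $\ell^1$-norm) giving a strict contraction: $\|\E|G(v;\xi)-G(v';\xi)|\|_w\le r_0\|v-v'\|_w$ for some $r_0\in(0,1)$, at least after $\ell$ steps. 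I would also use $\E[|f_v(0,\dots;\zeta_v)|]<\infty$ to check $\E[|G(0;\xi)|]<\infty$, so that a fixed reference point has finite moment.

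Existence and uniqueness of a stationary $(\tilde V_n)$ then follows by the standard iterated-function-system / backward-coupling argument: the backward iterates $G(\cdot;\xi_0)\circ G(\cdot;\xi_{-1})\circ\cdots\circ G(\cdot;\xi_{-n+1})$ applied to any fixed point form an a.s.\ Cauchy sequence in $L^1$ (using the contraction after every $\ell$ steps plus the finite-moment bound for the telescoping terms), hence converge to a limit $\tilde V_0$ independent of the starting point; shifting gives the stationary process, and unstacking recovers $(\tilde X^{(m)}_t)$, which is periodically stationary by construction. Periodic weak dependence is inherited because the coupling is exponentially fast: explicitly, comparing the stationary solution with the one started from history $(x_0,x_{-1},\dots)$, both driven by the \emph{same} $\zeta$'s, the contraction gives $\E|\tilde V_n-V_n|\preceq (\Gamma_p\cdots\Gamma_1)^{n}|\tilde V_0-V_0|$, and since $\E|\tilde V_0-V_0|<\infty$ we obtain $\E|\tilde V_n-V_n|\preceq C\,\tilde r^{\,n}$ for any $\tilde r\in(\rho(\Gamma_p\cdots\Gamma_1),1)$; translating $n=\lfloor t/p\rfloor$ back to the original time index and absorbing the at-most-$p$ intermediate steps (each contributing a bounded multiplicative factor via the same Lipschitz bound) yields $\E|\tilde X^{(m)}_t-X^{(m)}_t|\preceq C r^{t}$ with $r=\tilde r^{1/p}$, which is the claim; the $\theta$-weak-dependence coefficients are then read off from this geometric coupling rate exactly as in \cite{DT,DP}.

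The main obstacle I anticipate is purely spectral-algebraic rather than probabilistic: the bound $\rho(\Gamma_p\cdots\Gamma_1)<1$ does not directly give a one-step contraction in any fixed norm, so one must either pass to the $\ell$-th power of the period map and work in a Perron-eigenvector-weighted $\ell^1$-norm, or invoke Gelfand's formula to pick a norm in which $\Gamma_p\cdots\Gamma_1$ is contractive; handling the componentwise ($\preceq$) inequalities consistently through this change of norm, and making sure the "extra" intermediate-season steps between multiples of $p$ are controlled, is the delicate bookkeeping. Everything else — the IFS existence/uniqueness and the coupling rate — is then a routine adaptation of the arguments in \cite{DT}.
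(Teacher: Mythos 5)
Your proposal follows essentially the same route as the paper's proof: embed the truncated recursion into the $mp$-dimensional companion/stacked form, compose the random maps over one period to get an i.i.d.-driven iteration whose mean Lipschitz matrix is $\Gamma_p\cdots\Gamma_1$, and use $\rho(\Gamma_p\cdots\Gamma_1)<1$ together with Gelfand's formula to obtain a contraction after finitely many steps, from which stationarity, uniqueness, weak dependence and the geometric coupling rate follow. The only difference is presentational: where you sketch the backward-coupling/IFS argument directly, the paper delegates those steps to Theorem~2 of \cite{WS}, the coupling bound of \cite{DP} and Theorem~2.6.1 of \cite{Straumann}.
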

\begin{proof}
  The proof can be found in Appendix \ref{sec:prop:truncate}.
\end{proof}
The generalisation of the existence of a unique periodically stationary solution to \eqref{eq:finite_reg} is established in the following theorem. We also prove, in the proposition that follows, along the lines of \cite{DW}, that the process started from an arbitrary history will converge towards the periodically stationary solution.
\begin{Theorem}
\label{thm:infinite}
    Let $(\zeta_t)_{t\in \Z}$ be an i.p.d. family of random variables and $(f_t)_{t\in \Z}$ be a periodic sequence of functions from $(\mathbb R ^d)^{\N}$ to $\R^d$ satisfying Assumption \ref{ass:contraction}. There exists a unique periodically stationary and periodically weakly dependent time series $(\tilde X_t)_{t\in \Z}$ that solves 
    $$\tilde X_t=f_t(\tilde X_{t-1},\tilde X_{t-2},\dots; \zeta_t).$$
\end{Theorem}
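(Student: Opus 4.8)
The plan is to construct the periodically stationary solution $(\tilde X_t)$ as the $L^1$-limit, as $m\to\infty$, of the truncated periodically stationary solutions $(\tilde X^{(m)}_t)$ supplied by Proposition~\ref{prop:truncate}. This is legitimate because Assumption~\ref{ass:contraction} implies Assumption~\ref{ass:contraction_periodic} for every $m\in\N^*$ (take $A^{(v)}_k=A_k$ and invoke Lemma~1 of \cite{DT}, as recorded above, to get $\rho(\Gamma_p\cdots\Gamma_1)=\rho(\Gamma^p)<1$), so each $(\tilde X^{(m)}_t)_{t\in\Z}$ exists, is periodically stationary and periodically weakly dependent, is a measurable function of $(\zeta_s)_{s\le t}$, and is driven by the common noise $(\zeta_t)$. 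First I would establish a moment bound uniform in $m$: putting $x'=0$ in Assumption~\ref{ass:contraction} gives $\E[|f_v(x;\zeta_v)|]\preceq b+\sum_{k\ge1}A_k|x_k|$ with $b:=\max_{1\le v\le p}\E[|f_v(0,0,\dots;\zeta_v)|]<\infty$; since $\E[|\tilde X^{(m)}_t|]$ is periodic in $t$ hence bounded, the truncated recursion yields $\E[|\tilde X^{(m)}_t|]\preceq b+\sum_{k=1}^{mp}A_k\E[|\tilde X^{(m)}_{t-k}|]$, and, taking $\sup_t$ and using that $\rho(\sum_kA_k)<1$ makes $I-\sum_kA_k$ invertible with nonnegative inverse, $\sup_{m,t}\E[|\tilde X^{(m)}_t|]\preceq(I-\sum_{k\ge1}A_k)^{-1}b=:B<\infty$.

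Next I would show the sequence is Cauchy. For $m'>m$ the two truncated recursions at time $t$ differ only in that the lags $k$ with $mp<k\le m'p$ are replaced by $0$ in the $m$-truncated one; applying Assumption~\ref{ass:contraction} after conditioning on $\sigma(\zeta_s:s\le t-1)$ (which is independent of $\zeta_t$ and with respect to which all iterates $\tilde X^{(m)}_{t-k},\tilde X^{(m')}_{t-k}$ are measurable), then taking expectations, gives for $V_t:=\E[|\tilde X^{(m')}_t-\tilde X^{(m)}_t|]$,
\[ V_t \preceq \sum_{k=1}^{mp} A_k V_{t-k} + \Big(\sum_{k>mp}A_k\Big)B. \]
As $V_t\preceq 2B$, taking $\sup_t$ and inverting $I-\sum_kA_k$ gives $\sup_tV_t\preceq(I-\sum_kA_k)^{-1}(\sum_{k>mp}A_k)B\to0$, since the series $\sum_kA_k$ converges. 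Hence $(\tilde X^{(m)}_t)_m$ is $L^1$-Cauchy uniformly in $t$; call $\tilde X_t$ its limit (still a measurable function of $(\zeta_s)_{s\le t}$).

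Then I would verify $\tilde X$ solves the infinite-memory equation and inherits the structure. Conditioning on $\sigma(\zeta_s:s\le t-1)$ again, Assumption~\ref{ass:contraction} bounds $\E[|f_t(\tilde X^{(m)}_{t-1},\dots,\tilde X^{(m)}_{t-mp},0,\dots;\zeta_t)-f_t(\tilde X_{t-1},\tilde X_{t-2},\dots;\zeta_t)|]$ by $\sum_{k=1}^{mp}A_k\E[|\tilde X^{(m)}_{t-k}-\tilde X_{t-k}|]+(\sum_{k>mp}A_k)B\to0$, so passing to the limit in \eqref{eq:finite_reg} yields $\tilde X_t=f_t(\tilde X_{t-1},\tilde X_{t-2},\dots;\zeta_t)$ a.s., with $\E[|\tilde X_t|]\preceq b+(\sum_kA_k)B<\infty$. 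Periodic stationarity transfers from the $\tilde X^{(m)}$: $L^1$-convergence of coordinates forces convergence in distribution of every finite-dimensional marginal of the embedded chain $Z^{(m)}_n=(\tilde X^{(m)}_{np+p},\dots,\tilde X^{(m)}_{np+1})$, and stationarity survives the distributional limit. For periodic weak dependence I would either re-run the coupling argument of \cite{DW,DT} directly on the infinite-memory recursion (the contraction forces the influence of the pre-$(-r)$ noise on $X_0$ to decay, giving summable $\theta$-coefficients for the embedded chain) or pass to the limit the uniformly controlled coefficients of the truncations; see \cite{DP}. For uniqueness, if $(\tilde X_t),(\tilde Y_t)$ are two periodically stationary solutions, then $D_t:=\E[|\tilde X_t-\tilde Y_t|]\preceq 2B$, the same conditioning argument gives $D_t\preceq\sum_{k\ge1}A_kD_{t-k}$, so $\sup_tD_t\preceq(\sum_kA_k)^n\sup_tD_t\to0$ and $\tilde X_t=\tilde Y_t$ a.s.

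I expect the main obstacle to be that the contraction is spectral, $\rho(\sum_kA_k)<1$, rather than a bound on an operator norm, exactly the point stressed in the remarks preceding the theorem: one must work throughout with the Perron–Frobenius structure of the nonnegative matrix $\sum_kA_k$ — invertibility of $I-\sum_kA_k$ with nonnegative inverse, and $(\sum_kA_k)^n\to0$ — instead of summing a geometric series in norm. A secondary technical point is that Assumption~\ref{ass:contraction} is stated for deterministic inputs, so every application to the random iterates has to be routed through conditioning on $\sigma(\zeta_s:s\le t-1)$, together with a dominated-convergence control of the tail $\sum_{k>mp}A_k$, whose decay to $0$ rests on the convergence of $\sum_kA_k$.
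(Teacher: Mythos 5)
Your proposal is correct and follows essentially the same route as the paper: existence of the truncated periodically stationary solutions via Proposition \ref{prop:truncate}, a uniform $L^1$ bound of the form $(I-\sum_k A_k)^{-1}b$ using the nonnegative inverse, a Cauchy estimate in which the tail $\sum_{k>mp}A_k$ controls the truncation error, and passage to the $L^1$ limit. The only cosmetic differences are that you bound $\sup_t V_t$ directly where the paper invokes its convolution Lemma \ref{lmm:convolution}, and you spell out the limiting, stationarity and uniqueness steps that the paper delegates to the arguments of \cite{DW}.
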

\begin{proof}
The proof can be found in Appendix \ref{sec:thm:infinite}
\end{proof}
We now give an upper bound on the speed of decay of the distance between a solution started with a given history $X$ and the stationary regime $\tilde X$. Throughout this section, $*$ denotes the convolution product $(a * b)_t=\sum_{k=1}^{t-1}a_kb_{t-k}$ for the nonnegative sequences of matrices $(a_k)$ and $(b_k)$ defined on $\N^*$. By associativity, we can define recursively $a^{*1}=a$ and $a^{*(n+1)}=a*a^{*n}$. Given the matrix sequence $(A_k)_{k\in \mathbb N^*}$ from Assumption \ref{ass:contraction}, we define $B=\sum_{n\geq 1}A^{*n}$, which is in $\ell_1(\N^*)$. We also define the matrix remainder sequence $U_t=\sum_{k=t}^{+\infty} A_k$, which clearly tends to zero as $t\to +\infty$. Set $(x_0,x_{-1},x_{-2},\cdots)$ to be a bounded sequence in $\R^d$ and define $(X_t)_{t\in \N}$ recursively by
\begin{equation}
\label{eq:given_history}
 X_t=f_t \left( X_{t-1},\cdots, X_1,x_0,x_{-1},\cdots; \zeta_t\right).
\end{equation}
\begin{Proposition}
\label{prop:speed}
    Assume that \ref{ass:contraction} holds and let $\tilde X$ be the unique periodically stationary solution of \eqref{eq:inf_reg}. Let $ X$ be the solution of the regression \eqref{eq:given_history} with a given bounded history $(x_0,x_{-1},\cdots)$. We then have that for any $t \in \N$
    $$\E \left [|X_t-\tilde X_t|_{ } \right]\preceq \left(\sum_{k=1}^{t} B_kU_{t-k}\right)C \xrightarrow[t\to +\infty] {}0,$$
    where $C$ is a nonnegative constant vector. More specifically,
    \begin{itemize}
        \item If for some $\beta >0$ we have $A_k = O(e^{-\beta k})$, then there exists a $\delta \in (0,\beta)$ such that  $$\E \left [|\tilde X_ t -X_t|_{ } \right] \preceq  C e^{-\delta t}, \quad \text{for all $t \in \N^*$}.$$
        \item If for some $\beta >0$ we have $A_k = O(k^{-2(1+\beta)})$, then
        $$\E \left [|\tilde X_ t -X_t|_{ } \right] \preceq  \frac{C}{t},  \quad \text{for all $t \in \N^*$}.$$
    \end{itemize}
\end{Proposition}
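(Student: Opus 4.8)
The plan is to control the error $\E[|X_t - \tilde X_t|]$ by iterating the Lipschitz bound in Assumption \ref{ass:contraction} and carefully tracking the contribution of the unknown history. First I would compare $X_t$ (the solution started from the bounded history $(x_0, x_{-1}, \dots)$) with $\tilde X_t$ (the periodically stationary solution of \eqref{eq:inf_reg}) by applying the Lipschitz inequality at season $v = t \bmod p$: writing $e_t = \E[|X_t - \tilde X_t|]$, one gets $e_t \preceq \sum_{k=1}^{\infty} A_k \, e_{t-k}$, where $e_{t-k}$ for $t-k \le 0$ is replaced by $\E[|x_{t-k} - \tilde X_{t-k}|]$, which is bounded by some fixed constant vector (since the history is bounded and $\tilde X$ is periodically stationary, hence has bounded first moment by Assumption \ref{ass:contraction}). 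The key observation is that for indices $t - k > 0$ the recursion closes on itself, whereas the ``boundary'' terms with $t - k \le 0$ are the only source of the residual error; these are exactly the terms weighted by $A_k$ with $k \ge t$, i.e. by the tail $U_t = \sum_{k \ge t} A_k$.

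Next I would unfold this recursion. Splitting $e_t \preceq \sum_{k=1}^{t-1} A_k e_{t-k} + U_t C_0$ for a suitable constant vector $C_0$, and iterating, the homogeneous part generates convolution powers $A^{*n}$ while each iteration peels off one more $U$-tail term. Summing the geometric-type series in the sequence algebra $\ell_1(\N^*)$ — which is legitimate because $\rho(\sum_k A_k) < 1$ guarantees $B = \sum_{n \ge 1} A^{*n} \in \ell_1(\N^*)$ — yields the bound $e_t \preceq \big(\sum_{k=1}^t B_k U_{t-k}\big) C$ for a nonnegative constant vector $C$ (absorbing $C_0$ and the $n=0$ term $U_t$ itself, noting $U_0 = \sum_{k\ge 1} A_k$ dominates $U_t$ so the $n=0$ term fits the same form with $B_0 := \mathrm{Id}$ conventionally, or one writes it separately). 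That the right-hand side tends to $0$ follows since $B \in \ell_1$, $U_{t-k} \to 0$, and $U$ is uniformly bounded (by $U_0$), so dominated convergence for series applies to $\sum_k B_k U_{t-k}$.

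For the two quantitative rates I would estimate the convolution $\sum_{k=1}^t B_k U_{t-k}$ under the respective decay hypotheses on $A_k$. In the exponential case $A_k = O(e^{-\beta k})$, one first checks $U_t = O(e^{-\beta t})$, then that $B = \sum_n A^{*n}$ inherits an exponential tail $B_k = O(e^{-\beta' k})$ for some $\beta' \in (0,\beta)$ — this is a standard fact, provable by choosing $\delta < \beta$ small enough that $\sum_k e^{\delta k} A_k$ still has spectral radius $< 1$, so that $\sum_k e^{\delta k}|B_k|$ converges — and finally a convolution of two sequences each $O(e^{-\delta k})$ is again $O((t+1)e^{-\delta t}) = O(e^{-\delta' t})$ after shrinking $\delta'$ slightly. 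In the polynomial case $A_k = O(k^{-2(1+\beta)})$, I would use that $B_k$ then decays like $O(k^{-2(1+\beta)})$ as well (the dominant term being $A^{*1} = A$, since convolutions of summable polynomially-decaying sequences decay at the slower of the two rates, here unchanged), while $U_t = O(t^{-1-2\beta}) = O(t^{-1})$; the convolution $\sum_{k=1}^t B_k U_{t-k}$ of a sequence that is $O(k^{-2(1+\beta)})$ (hence $\ell_1$) against one that is $O(k^{-1-2\beta})$ and monotone-ish is then $O(t^{-1-2\beta}) + O(t^{-2(1+\beta)}\cdot t) = O(t^{-1})$ — splitting the sum at $k = t/2$ and bounding each half separately.

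The main obstacle I anticipate is the bookkeeping in the second and third steps: justifying the interchange of the iteration with the (vector, matrix-valued) infinite sums, and proving cleanly that the $\ell_1$ sequence $B$ inherits the decay rate of $A$. The exponential inheritance needs the ``perturbed spectral radius'' argument (replacing $A_k$ by $e^{\delta k}A_k$ and invoking continuity of the spectral radius), and the polynomial inheritance needs the elementary but slightly fiddly lemma that if $a, b \in \ell_1(\N^*)$ with $a_k = O(k^{-s})$, $b_k = O(k^{-s})$, $s > 1$, then $(a*b)_k = O(k^{-s})$ — proved by the split-at-$k/2$ technique. Everything else (the single Lipschitz step, boundedness of the history error, the convergence to $0$) is routine given Assumption \ref{ass:contraction} and the already-established existence of $\tilde X$ in Theorem \ref{thm:infinite}.
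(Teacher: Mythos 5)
Your overall strategy is the paper's: one application of the Lipschitz bound, isolating the pre-history contribution as the tail $U_t=\sum_{k\ge t}A_k$, unfolding the recursion into the renewal-type bound $\E[|X_t-\tilde X_t|]\preceq\bigl(\sum_{k\le t}B_kU_{t-k}\bigr)C$ with $B=\sum_{n\ge1}A^{*n}$, and then estimating that convolution under each decay hypothesis. The first statement and the exponential case are sound and essentially identical to Appendix A.3: the paper also reweights by $e^{\delta k}$ and uses continuity of $\delta\mapsto\rho\bigl(\sum_k e^{\delta k}A_k\bigr)$ at $0$ to get $\sum_k e^{\delta k}B_k<\infty$ (it then avoids even the harmless extra factor $t$ you pick up by keeping the weights inside the convolution).

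The gap is in the polynomial case, at the claim that $B_k=O(k^{-2(1+\beta)})$. The pairwise lemma you invoke (if $a,b\in\ell_1$ are $O(k^{-s})$ then $a*b=O(k^{-s})$) does not control the full series $B=\sum_{n\ge1}A^{*n}$: applying it inductively, the constant $D_n$ in $A^{*n}_k\preceq D_nk^{-s}$ satisfies roughly $D_{n+1}\le 2^{s}\bigl(\|S^n\|D_1+\|S\|D_n\bigr)$ with $S=\sum_kA_k$, and $\rho(S)<1$ only gives geometric decay of $\|S^n\|$, not $2^{s}\|S\|<1$; hence the $D_n$ may grow geometrically and $\sum_nD_n$ need not converge. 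Making "$B$ inherits the polynomial rate of $A$" rigorous requires a Kesten-type uniform bound on convolution powers of subexponentially decaying sequences (and a matrix-valued version of it), which is much heavier than the fiddly lemma you describe. The paper sidesteps this entirely: it proves only the first-moment bound $\sum_k kB_k<\infty$, via the exact identity $M_{n+1}=S^nM_1+M_nS$ for $M_n=\sum_k kA^{*n}_k$, whose summability follows from the geometric decay of $S^n$ alone; this yields $B_k\preceq Ck^{-1}$, which together with $U_t\preceq Ct^{-(1+2\beta)}$, $U\in\ell_1(\N^*)$ and your own split of the convolution at $t/2$ already gives the claimed $O(1/t)$ rate. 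So the conclusion is salvageable, but the route through $B_k=O(k^{-2(1+\beta)})$, as you justify it, does not go through.
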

\begin{proof}
  The proof can be found in Appendix \ref{sec:prop:speed}
\end{proof}
While Proposition \ref{prop:truncate} can be seen as a matrix-adapted generalisation of Proposition 3.1 in \cite{DW} to autoregressions with periodic coefficients, it also has the merit of providing more explicit upper bounds on the speed of convergence of the process started from a given initial history to its periodically stationary limit. 

This section concludes with a result on the almost sure vanishing of the difference between the periodically stationary trajectory and the trajectory started with an arbitrary history, provided that the kernel vanishes exponentially fast. 
\begin{Corollary}
\label{cor:almost_sure}
 Assume that \ref{ass:contraction} holds and let $\tilde X$ be the unique periodically stationary solution of \eqref{eq:inf_reg}. Let $X$ be the solution of the regression \eqref{eq:given_history} with a given bounded history $(x_0,x_{-1},\cdots)$. Furthermore, assume that the for some $\beta >0$ we have that $A_k=O(e^{-\beta k})$. Then, almost surely, there exists a constant $C>0$ and a $\delta>0$ such that for all $t \in \N$
 $$|\tilde X_t-X_t| \preceq C e^{-\delta t}.$$
\end{Corollary}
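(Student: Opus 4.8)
The natural strategy is to upgrade the bound in expectation from Proposition \ref{prop:speed} to an almost-sure bound by a Borel--Cantelli argument, which is the standard route for passing from $L^1$ rates to pathwise rates. By Proposition \ref{prop:speed}, under the exponential decay assumption $A_k=O(e^{-\beta k})$ there exist a nonnegative constant vector $C$ and $\delta\in(0,\beta)$ with $\E[|\tilde X_t-X_t|]\preceq C e^{-\delta t}$ for all $t\in\N^*$. Fix any $\delta'\in(0,\delta)$ and consider the events $E_t=\{|\tilde X_t-X_t|\not\preceq C e^{-\delta' t}\}$. By the Markov inequality applied componentwise, $\PP(E_t)\le \sum_{i=1}^d \PP(|\tilde X_t^{(i)}-X_t^{(i)}|> C^{(i)} e^{-\delta' t}) \le \sum_{i=1}^d \frac{\E[|\tilde X_t^{(i)}-X_t^{(i)}|]}{C^{(i)}e^{-\delta' t}} \le d\, e^{-(\delta-\delta')t}$, which is summable in $t$. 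By the first Borel--Cantelli lemma, almost surely only finitely many $E_t$ occur; hence there is a (random) $T_0$ with $|\tilde X_t-X_t|\preceq C e^{-\delta' t}$ for all $t\ge T_0$.

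It then remains to absorb the finitely many exceptional indices $t<T_0$ into the constant. For $t\in\{0,1,\dots,T_0-1\}$ set $C'^{(i)}=\max\bigl(C^{(i)},\ \max_{0\le t<T_0} |\tilde X_t^{(i)}-X_t^{(i)}|\, e^{\delta' t}\bigr)$; this is a finite (random) vector since it is a maximum of finitely many finite quantities (note $\tilde X_t$ and $X_t$ are almost surely finite random vectors in $\R^d$, both being well-defined solutions of the respective recursions). With $C'$ in place of $C$ and $\delta'$ in place of $\delta$, the bound $|\tilde X_t-X_t|\preceq C' e^{-\delta' t}$ holds for every $t\in\N$, which is exactly the claim.

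The only genuinely delicate point is the measurability/finiteness bookkeeping: one must check that the events $E_t$ are measurable (clear, as $X_t,\tilde X_t$ are $\mathcal F_t$-measurable) and that the random constant $C'$ is almost surely finite, which follows because $T_0$ is almost surely finite and each $|\tilde X_t-X_t|$ is almost surely finite. Everything else is a routine Borel--Cantelli argument; no new contraction estimates are needed beyond Proposition \ref{prop:speed}. One should also remark that the constant and exponent here are random — unlike in Proposition \ref{prop:speed} — which is intrinsic to almost-sure statements of this kind and worth noting explicitly in the statement or the proof.
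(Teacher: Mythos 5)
Your proposal is correct and follows essentially the same route as the paper: apply Markov's inequality to the $L^1$ rate $\E[|\tilde X_t-X_t|]\preceq C e^{-\delta t}$ from Proposition \ref{prop:speed}, sum over $t$, invoke Borel--Cantelli, and absorb the finitely many exceptional indices into an almost surely finite random constant. The paper phrases this by showing $e^{\frac{\delta}{2}t}|\tilde X_t-X_t|\to 0$ almost surely (working componentwise without loss of generality), which is the same argument with $\delta'=\delta/2$; your explicit remark that the constant is random is a fair clarification of the statement, not a deviation.
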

\begin{proof}
The proof can be found in Appendix \ref{sec:almost_sure}.\end{proof}

\subsection{Application to multivariate periodic Poisson autoregressions}
We now apply the results of Section \ref{sec:general_results} to give sufficient conditions for the existence of periodically stationary, ergodic and weakly dependent multivariate Poisson autoregressions. We prove that, if the kernel matrices of models \eqref{def:TypeI} and \eqref{def:TypeII} are bounded by sequences of matrices satisfying Assumption \ref{ass:contraction} or \ref{ass:contraction_periodic}, then the Poisson autoregression converges to its periodically stationary, ergodic and weakly dependent version. These periodically stationary, ergodic and weak dependent autoregressions are solutions of the autoregressions 
\begin{equation}
\label{def:TypeI_s}
    \begin{cases}
        \tilde Y_t&=N_t\left(\tilde \lambda_t\right),\\
        \tilde \lambda_t &= \psi \left(\mu_t+\sum_{k=-\infty}^{t-1}\phi^{(t)}_{t-k}\tilde Y_k\right)\\
        &=\psi \left(\mu_t+\sum_{k=1}^{+\infty}\phi^{(t)}_{k}\tilde Y_{t-k}\right)
    \end{cases}
        \text {for } t \in \Z,
\end{equation}
for Type I periodicity, and 
\begin{equation}
\label{def:TypeII_s}
    \begin{cases}
        \tilde Y_t&=N_t\left(\tilde\lambda_t\right),\\
        \tilde \lambda_t &= \psi \left(\mu_t+\sum_{k=-\infty}^{t-1}\phi^{(k)}_{t-k}\tilde Y_k\right)\\
        &=\psi \left(\mu_t+\sum_{k=1}^{+\infty}\phi^{(t-k)}_{k}\tilde Y_{t-k}\right)
    \end{cases}
        \text {for } t \in \Z,
\end{equation}
for Type II periodicity. Note that we here use the same Poisson processes $N$ used for the construction of \eqref{def:TypeI} and \eqref{def:TypeII}.

We now state a sufficient stability condition for the periodic multivariate Poisson autoregression.
\begin{Assumption}
\label{ass:stability}
    There exists a family of matrices $(A_k)_{k\in \N^*}$ with non-negative coefficients satisfying 
    $$L|\phi^{(v)}_k| \preceq A_k \text{ for any }v=1,\cdots,p,\quad  k\in \N^*  \text{ and} \quad \rho\left(\sum_{k=1}^{+\infty} A_k \right)<1.$$
\end{Assumption}
This assumption is satisfied for instance if the kernel at each season $\phi^{(v)}$ is an attenuation of a global kernel $A$, \textit{e.g.} 
$$\phi^{(v)}_k=\frac{\sin(2\pi v/p)}{L}A_k.$$

\begin{Proposition}
\label{prop:stationary}
Let $(\phi_k^{(t)})_{t\in \Z, k \in \N^*}$ be a family of kernel matrices periodic in $t$ and let $\psi$ be an $L-$Lipschitz non-negative jump-rate function. 
    Assume that $(Y_t)_{t\in \N}$ is a multivariate Poisson autoregression that follows the recursion \eqref{def:TypeI} or \eqref{def:TypeII}. If Assumption \ref{ass:stability} holds, then
    \begin{enumerate}
        \item Both equations \eqref{def:TypeI_s} and \eqref{def:TypeII_s} admit a unique solution $(\tilde Y)_{t\in \Z}$ that is strictly periodically stationary, ergodic and weakly dependent.
        \item For $Y$ constructed according to recursion \eqref{def:TypeI} or \eqref{def:TypeII} (that is, with an empty history), we have that $$\E [|\tilde Y_t-Y_t|]\xrightarrow[t\to +\infty] {}0.$$
        More specifically, if $(A_k)_{k\in \N^*}$ decays exponentially (respectively at a speed $k^{-2(1+\beta)}$ for some $\beta >0$) then the convergence happens at least at an exponential rate (respectively at least at the rate $k^{-1}$). 
        \item For any $r\geq 1$ and $t\in \N^*$, we have that $\E[Y^r_t]\preceq C$ where $C$ is a non-negative constant that does not depend on $t$. 
    \end{enumerate}
\end{Proposition}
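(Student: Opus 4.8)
The plan is to realise both recursions \eqref{def:TypeI} and \eqref{def:TypeII} as instances of the general periodic infinite-memory autoregression \eqref{eq:inf_reg}, so that Parts~1 and~2 become immediate consequences of Theorem~\ref{thm:infinite}, Proposition~\ref{prop:speed} and Corollary~\ref{cor:almost_sure}, and then to establish the uniform moment bounds of Part~3 by a separate bootstrap argument, since these are not covered by the general theory.

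For the embedding I set $\zeta_t=N_t$, which is i.i.d.\ and hence i.p.d., and
$$f_t(x_1,x_2,\dots;N_t)=N_t\Bigl(\psi\bigl(\mu_t+\textstyle\sum_{k\ge1}\phi^{(t)}_k x_k\bigr)\Bigr)$$
for Type~I, and the same with $\phi^{(t-k)}_k$ in place of $\phi^{(t)}_k$ for Type~II (with the inner sum set to $0$ when it does not converge absolutely — a case on which the Lipschitz bound of Assumption~\ref{ass:contraction} is easily seen to hold, since by Assumption~\ref{ass:stability} the relevant right-hand side is then infinite). Periodicity in $t$ of $(f_t)$ is immediate from that of $(\mu_t)$ and of $\phi$, and one checks directly that the abstract solution produced by the general theory does satisfy the Poisson structure, with $\tilde\lambda_t=\E[\tilde Y_t\mid\mathcal F_{t-1}]=\psi(\mu_t+\sum_k\phi^{(t)}_k\tilde Y_{t-k})$. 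The one point requiring verification is that Assumption~\ref{ass:stability} implies Assumption~\ref{ass:contraction} for this $f_t$: since the two evaluations $f_v(x;N_v)$, $f_v(x';N_v)$ are driven by the same unit-intensity Poisson process and the arguments of $N_v$ are nonnegative, a direct computation gives the coordinatewise identity $\E|N_v(a)-N_v(b)|=|a-b|$ (each coordinate of $N_v(a)-N_v(b)$ is, up to sign, Poisson with mean $|a^{(i)}-b^{(i)}|$), so with the $L$-Lipschitz property of $\psi$ and $L|\phi^{(v)}_k|\preceq A_k$,
$$\E\bigl|f_v(x;N_v)-f_v(x';N_v)\bigr|\preceq L\Bigl|\textstyle\sum_{k\ge1}\phi^{(v)}_k(x_k-x'_k)\Bigr|\preceq\sum_{k\ge1}A_k|x_k-x'_k|,$$
while $\E|f_v(0;N_v)|=|\psi(\mu_v)|<\infty$. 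Theorem~\ref{thm:infinite} then yields Part~1 up to ergodicity; periodic ergodicity follows because the periodically stationary solution $\tilde Y_t$ is, by its construction as an iterated limit, a fixed measurable functional of $(N_s)_{s\le t}$, so its seasonal embedding $(Z_n)$ is a fixed measurable image of the $p$-block shift of the i.i.d.\ sequence $(N_s)$, which is ergodic. For Part~2, the process $Y$ built from \eqref{def:TypeI}/\eqref{def:TypeII} is exactly the solution of \eqref{eq:given_history} with the bounded zero history, so Proposition~\ref{prop:speed} gives $\E|Y_t-\tilde Y_t|\preceq(\sum_{k=1}^t B_kU_{t-k})C\to0$, the exponential rate when $A_k=O(e^{-\beta k})$ (almost surely via Corollary~\ref{cor:almost_sure}) and the rate $1/t$ when $A_k=O(k^{-2(1+\beta)})$.

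Part~3 I would prove by induction on $r\in\N^*$ (real $r\ge1$ being trivial since $0\le Y^r\le Y^{\lceil r\rceil}$ for integer-valued $Y$). Two ingredients are needed. First, the Lipschitz property of $\psi$ together with $L|\phi^{(v)}_k|\preceq A_k$ give the coordinatewise bound $\lambda_t\preceq c_0+\sum_{k=1}^{t-1}A_k Y_{t-k}$ with $c_0^{(i)}=\psi(0)+L\sup_v|\mu^{(i)}_v|$. Second, since $Y^{(i)}_t\mid\mathcal F_{t-1}\sim\mathrm{Pois}(\lambda^{(i)}_t)$, the Touchard formula gives $\E[(Y^{(i)}_t)^r\mid\mathcal F_{t-1}]=\sum_{j=1}^r S(r,j)(\lambda^{(i)}_t)^j\le(\lambda^{(i)}_t)^r+C_r\bigl(1+(\lambda^{(i)}_t)^{r-1}\bigr)$. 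For $r=1$ this reads $\E[Y_t]=\E[\lambda_t]\preceq c_0+\sum_{k=1}^{t-1}A_k\E[Y_{t-k}]$; writing $R=\sum_{k\ge1}A_k$, so that $(I-R)^{-1}=\sum_{n\ge0}R^n\succeq0$ because $\rho(R)<1$, and $v_1=(I-R)^{-1}c_0$, one obtains $\E[Y_t]\preceq v_1$ for all $t$ by induction on $t$, using $v_1=c_0+Rv_1\succeq c_0+\sum_{k=1}^{t-1}A_k v_1$. Assuming the bound up to order $r-1$ (which by the same Minkowski argument also bounds $\E[(\lambda^{(i)}_t)^{r-1}]$ uniformly in $t$, so the lower-order term above is at most a constant), taking $L^r$-norms in the two ingredients and applying Minkowski's inequality gives $\|Y^{(i)}_t\|_r\le\|\lambda^{(i)}_t\|_r+c'_r\le c_0^{(i)}+c'_r+\bigl(\sum_{k=1}^{t-1}A_k\|Y_{t-k}\|_r\bigr)^{(i)}$, i.e.\ the same linear recursive inequality with a larger constant vector $\tilde c$, and the identical supersolution comparison with $v_r=(I-R)^{-1}\tilde c$ yields $\sup_t\E[(Y^{(i)}_t)^r]\le(v_r^{(i)})^r<\infty$.

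The main obstacle is Part~3: because the conditional Poisson moments are polynomials in $\lambda_t$, a one-shot $L^r$ fixed-point argument does not close (the contraction factor $\rho(\sum_k A_k)<1$ ends up multiplied by a Bell-number constant). The induction on $r$ is what rescues it — the top-degree term is handled by the linear $L^r$ recursion while every lower-degree term is absorbed into the additive constant using the already-proven lower-moment bounds — and the nonnegativity of $(I-\sum_k A_k)^{-1}$, equivalently $\rho(\sum_k A_k)<1$, is precisely what makes the supersolution $v_r$ dominate the iteration.
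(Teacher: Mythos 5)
Your proposal is correct, and for Parts 1 and 2 it follows essentially the paper's own route: the same embedding $f_t(x;N_t)=N_t(\psi(\mu_t+\sum_k\phi^{(t)}_k x_k))$ (resp.\ $\phi^{(t-k)}_k$ for Type II), the same verification that Assumption \ref{ass:stability} plus the Lipschitz property of $\psi$ and the identity $\E|N_v(a)-N_v(b)|=|a-b|$ yield Assumption \ref{ass:contraction}, and then an appeal to Theorem \ref{thm:infinite}, Proposition \ref{prop:truncate}/\ref{prop:speed} for existence, uniqueness and the convergence rates; your explicit ergodicity argument (the stationary solution is a fixed measurable functional of the i.i.d.\ noise, so its seasonal embedding inherits ergodicity from the block shift) is a useful addition that the paper leaves implicit.

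Where you genuinely diverge is Part 3. The paper does run a one-shot $L^r$ argument: it invokes the bound from Lemma 2 of Debaly--Truquet, $\E[N(\lambda)^r\mid\lambda]\le(1+\delta)\lambda^r+C_{r,\delta}$, chooses $\delta$ so that $\rho\bigl((1+\delta)^{1/r}\sum_k A_k\bigr)<1$, and then closes the recursion $\E^{1/r}[Y^r_t]\preceq(1+\delta)^{1/r}\sum_k A_k\,\E^{1/r}[Y^r_{t-k}]+C\mathds 1$ via Minkowski and Lemma \ref{lmm:convolution}. So your remark that a one-shot argument ``does not close'' is slightly overstated: the Bell/Stirling constants are absorbed into $C_{r,\delta}$ at the price of an arbitrarily small inflation $(1+\delta)^{1/r}$ of the contraction, which the strict inequality $\rho(\sum_k A_k)<1$ tolerates. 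Your alternative — induction on the integer moment order, Touchard's formula to isolate the top-degree term, the lower-moment bounds from the induction hypothesis to absorb everything else into the additive constant, and the supersolution comparison with $v_r=(I-\sum_k A_k)^{-1}\tilde c$ — is equally valid and has the merit of not perturbing the contraction at all, at the cost of being restricted to integer $r$ first (your reduction $Y^r\le Y^{\lceil r\rceil}$ handles general $r\ge1$). Both arguments rest on the same two pillars: the linear domination $\lambda_t\preceq c_0+\sum_k A_k Y_{t-k}$ and the nonnegativity of $(I-\sum_k A_k)^{-1}$.
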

\begin{proof}
The proof can be found in Appendix \ref{sec:prop:stationary}.
\end{proof}
The same proof can be used to deduce the convergence towards a periodic stationary and weakly dependent solution for the multivariate periodic Poisson autoregression with finite memory under the less restricting stability condition \ref{ass:contraction_periodic}. 
\begin{Proposition}
    Given $m \in \N^*$, let $(\phi^{(t)}_k)_{t\in \Z, k\in \{1,\cdots,mp\}}$ be a family of kernel matrices periodic in $t$ and let $\psi$ be an $L-$Lipschitz non-negative jump-rate function. Assume that $(Y_t)_{t\in \N}$ is a multivariate Poisson autoregression of Type I following recursion \eqref{def:TypeI}. For $v\in\{1,\cdots,p\}$, let 
    $$\Gamma_v = \begin{pmatrix}
        |\phi^{(v)}_1| & |\phi^{(v)}_2| & \cdots &|\phi^{(v)}_{mp-1}| &|\phi^{(v)}_{mp}|\\
    1 & 0 & \cdots & 0 &0 \\
    \vdots & \ddots & \ddots & \vdots & \vdots \\
    0 & 0 & \cdots & 1 &0 
    \end{pmatrix}.$$
    If $\rho(\Gamma_1 \cdots \Gamma_p)<1$, then 
        \begin{enumerate}
        \item Equation \eqref{def:TypeI_s} admits a unique solution $(\tilde Y)_{t\in \Z}$ that is strictly periodically stationary, ergodic and weakly dependent.
        \item For any $r\geq 1$ and $t\in \N^*$, we have that $\E[Y^r_t]\preceq C$, where $C$ is a non-negative constant that does not depend on $t$. 
    \end{enumerate}
\end{Proposition}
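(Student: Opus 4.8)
The plan is to recognise the finite‑memory Type~I Poisson autoregression as a particular case of the abstract $mp$‑truncated recursion \eqref{eq:finite_reg} and to invoke Proposition~\ref{prop:truncate}; after that only periodic ergodicity and the moment bounds need a separate (and essentially routine) treatment, along the lines of the proof of Proposition~\ref{prop:stationary}. Concretely, I would take as innovation $\zeta_t:=N_t(\cdot)$, the unit‑intensity $d$‑dimensional Poisson process, which is i.p.d.\ because the $N_t$ are i.i.d.; take as state $X_t:=Y_t\in\N^d\subset\R^d$; and define, for $v=1,\dots,p$,
\[
 f_v(x_1,\dots,x_{mp},0,\dots;\zeta):=\zeta\!\left(\psi\!\Big(\mu_v+\textstyle\sum_{k=1}^{mp}\phi^{(v)}_k x_k\Big)\right).
\]
With $\phi^{(t)}_k=0$ for $k>mp$, the stationary system \eqref{def:TypeI_s} is then literally \eqref{eq:finite_reg} for this $f$, and the recursion \eqref{def:TypeI} with empty history is its forward iteration.

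Next I would verify Assumption~\ref{ass:contraction_periodic}. Coupling $N_v(\lambda)$ and $N_v(\lambda')$ through the \emph{same} Poisson process gives, coordinatewise, $\E|N^{(i)}_v(\lambda^{(i)})-N^{(i)}_v(\lambda'^{(i)})|=|\lambda^{(i)}-\lambda'^{(i)}|$, since the difference is, up to sign, Poisson with mean $|\lambda^{(i)}-\lambda'^{(i)}|$; hence $\E|N_v(\lambda)-N_v(\lambda')|=|\lambda-\lambda'|$ componentwise. Combining this with the coordinatewise $L$‑Lipschitz bound for $\psi$ and the triangle inequality yields $\E|f_v(x_1,\dots,x_{mp},0,\dots;\zeta_v)-f_v(x'_1,\dots,x'_{mp},0,\dots;\zeta_v)|\preceq\sum_{k=1}^{mp}L|\phi^{(v)}_k|\,|x_k-x'_k|$, so Assumption~\ref{ass:contraction_periodic} holds with $A^{(v)}_k:=L|\phi^{(v)}_k|$ (equivalently, after rescaling $\psi$ so that $L=1$); the associated companion matrices are the $\Gamma_v$ of the statement and the hypothesis $\rho(\Gamma_1\cdots\Gamma_p)<1$ is exactly the spectral‑radius condition required there. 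Integrability at the origin is immediate since $\E|f_v(0,0,\dots;\zeta_v)|=\psi(\mu_v)<\infty$. Proposition~\ref{prop:truncate} then delivers the unique periodically stationary, periodically weakly dependent solution $(\tilde Y_t)_{t\in\Z}$ of \eqref{def:TypeI_s}, which is automatically $\N^d$‑valued because $N_t$ returns integer counts and $\tilde\lambda_t\ge0$.

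For the remaining two points I would argue as in the proof of Proposition~\ref{prop:stationary}. For periodic ergodicity: the construction behind Proposition~\ref{prop:truncate} realises $\tilde Y_t$ as a measurable functional of $(\zeta_t,\zeta_{t-1},\dots)$, so the seasonally embedded vector $Z_n=(\tilde Y_{np+p},\dots,\tilde Y_{np+1})$ is a measurable function of the i.i.d.\ innovation‑blocks $(\zeta_{np+p},\dots,\zeta_{np+1})$ and their past, i.e.\ a causal Bernoulli shift of an i.i.d.\ sequence, hence ergodic. For the moment bounds: conditionally on $\mathcal F^Y_{t-1}$, $Y_t$ is Poisson with mean $\lambda_t\preceq\psi(0)\,\mathds 1+L|\mu_t|+L\sum_{k=1}^{mp}|\phi^{(t)}_k|Y_{t-k}$ (using that $\psi$ is increasing, $L$‑Lipschitz and $Y_{t-k}\succeq0$); working in an operator norm for which the product of companion matrices has norm strictly below $1$ (such a norm exists because its spectral radius is $<1$) and combining Minkowski's inequality with standard bounds on the centred moments of a Poisson variable, an induction on $r$ gives $\E[Y_t^r]\preceq C$ uniformly in $t$, with the inhomogeneous terms produced by $\psi(0)$, $\mu_t$ and the already‑controlled lower‑order moments.

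I expect the only genuinely delicate step to be this last moment estimate. Since stability is assumed only over a full period — a product of $p$ companion matrices, no single $\Gamma_v$ being Schur‑stable — the associated moment recursion contracts only every $p$ steps, so one must iterate a period at a time, absorb the within‑period growth into constants, and carry out the Poisson‑moment induction and the nonlinearity of $\psi$ simultaneously while keeping the forcing terms bounded; care is also needed with the ordering of the $\Gamma_v$. Everything else is the same bookkeeping as in the infinite‑memory case treated in Propositions~\ref{prop:truncate}--\ref{prop:stationary}.
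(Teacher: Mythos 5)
Your proposal is correct and follows essentially the same route the paper intends: the paper gives no separate proof, remarking only that ``the same proof'' applies, i.e.\ cast the finite-memory Type I model as the truncated recursion \eqref{eq:finite_reg}, verify Assumption \ref{ass:contraction_periodic} with $A^{(v)}_k=L|\phi^{(v)}_k|$ via the same-Poisson-process coupling, invoke Proposition \ref{prop:truncate} for existence, uniqueness, periodic stationarity and weak dependence (with ergodicity from the causal Bernoulli-shift representation), and obtain the moment bounds by the computations in the proof of Proposition \ref{prop:stationary}, adapted to contraction over a full period. Your side remarks about the Lipschitz constant $L$ and the ordering $\Gamma_1\cdots\Gamma_p$ versus $\Gamma_p\cdots\Gamma_1$ correctly flag minor inconsistencies in the paper's statement rather than gaps in your argument.
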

Assumption \ref{ass:stability} is not optimal in two ways:
\begin{enumerate}
    \item It implies contraction along all seasons and not the weaker contraction over a period. For example, for the periodic INGARCH($1,1$) model proposed in \cite{ABD} --- which is equivalent to a Type II periodic Poisson autoregression with an exponential kernel $\phi^{(t)}_k=\nu_{t+1} e^{-\frac{k}{\tau}}$ as we shall see in Remark \ref{remark:abd} --- the sufficient stability condition is 
    $$Le^{-\frac{1}{\tau}}\prod_{v=1}^p (1+\nu_v)<1.$$
    Whereas, Assumption \ref{ass:stability} would necessitate the stronger condition
    $$Le^{-\frac{1}{\tau}}(1+\nu_v)<1, \quad \text{for all }v=1,\cdots,p.$$
    \item Just like their continuous-time counterparts, Hawkes processes, the instability of Poisson autoregressions stems from auto-/cross-excitation. Thus, assuming that $\psi$ is non decreasing, the negative values of $\phi^{(v)}$ should not have an impact on the stability, and we expect that Assumption \ref{ass:stability} could be relaxed to a condition on the positive part $(\phi^{(v)})_+$ rather than the absolute value $|\phi^{(v)}|$. This has been proven for Hawkes processes with $\psi(x)=(x)_+$ in \cite{CGMT}, using non-trivial renewal techniques. 
\end{enumerate}
    To illustrate the convergence results of Proposition \ref{prop:stationary}, we simulate a linear Type II $4-$periodic Poisson autoregression (network of size $d=12$) with kernels of the same form as \eqref{eq:example_kernel}.
    We compare the version with empty history (that is, following 
    \eqref{def:TypeI}) to the stationary solution (that is, following \eqref{def:TypeI_s}). The trajectories are shown on Figure \ref{fig:stationary}.

    \begin{figure}[h!]
        \centering
        \includegraphics[width=1\linewidth]{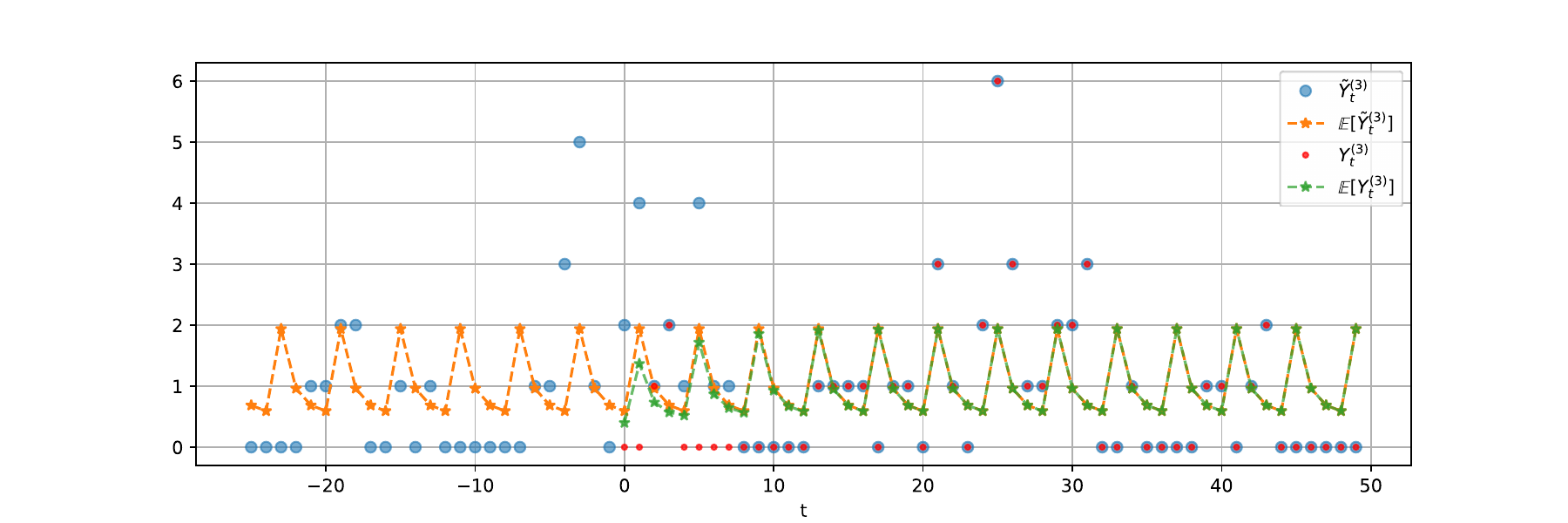}
        \caption{The counts $Y$ differ occasionally from the stationary solution $\tilde Y$ for $t\in \{0,\cdots,7\}$ but then the two become identical. Similarly, after a transitory period the curve $\E [Y^{(3)}_t]$ joins the strictly periodic curve $\E[\tilde Y^{(3)}_t]$.}
        \label{fig:stationary}
    \end{figure}

    For our example, the kernel matrices are dominated by $Ce^{-3k}$ for some positive matrix $C$, hence we expect that the difference between the empty history time series $Y$ and its periodically stationary version $\tilde Y$ decay at least exponentially fast. To illustrate this, we plot the logarithm of $\E |\tilde Y_t - Y_t|$ as a function of time on Figure \ref{fig:decay}.

    \begin{figure}[h!]
        \centering
        \includegraphics[width=1\linewidth]{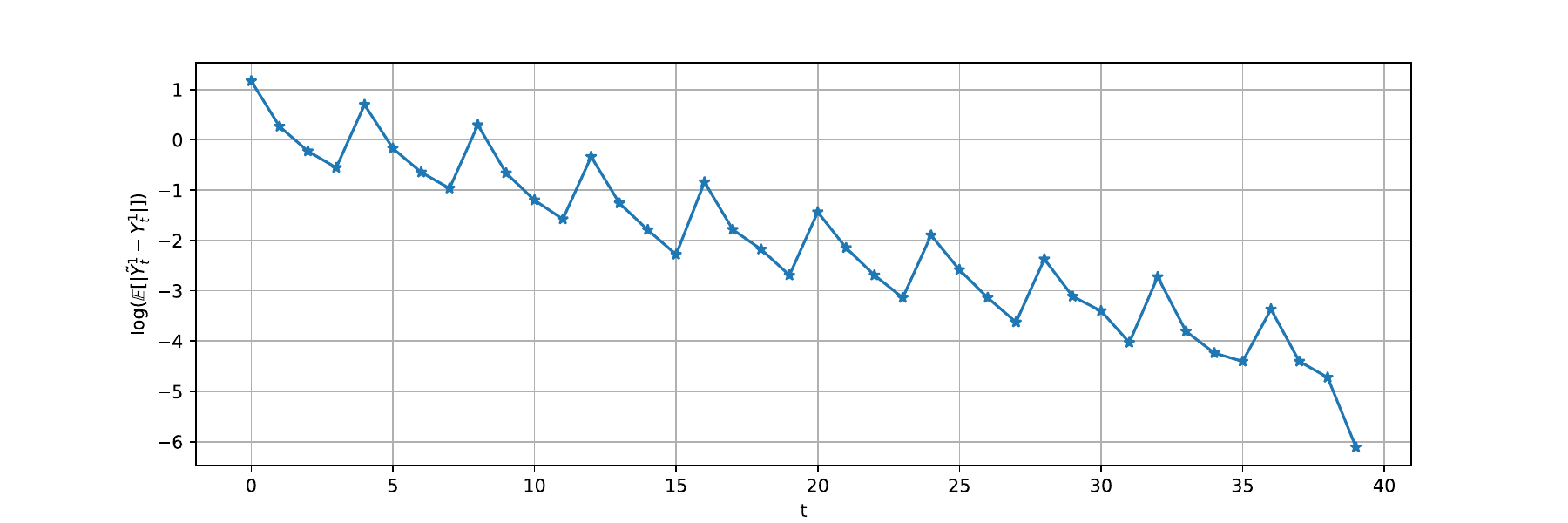}
        \caption{The linear decay of $\log \E |\tilde Y_t - Y_t|$. The empty history count process $Y$ joins its periodically stationary version $\tilde Y$ exponentially fast. The expected value is approximated by averaging $N_{MC}=2500$ trajectories.}
        \label{fig:decay}
    \end{figure}

 \section{The universal Markov approximation of Poisson autoregressions} 
 \label{sec:approx}
 In this section, we propose a universal Markov approximation of the periodic multivariate Poisson autoregression that is parametrically more parsimonious; this is particularly interesting for our model. For instance, if the regression involves $m$ lags in the past, then the total number of coefficients of the regression matrices is $d^2 m p$, which can be too high. In the network setting (\cite{ArFok,knight2016modellingdetrendingdecorrelationnetwork}) we can reduce the number of interaction terms leading to a more parsimonious model. We seek here to find a way to reduce the lag $m$ in case the process has a long or infinite memory. We first prove continuity results on the kernel, which means that for two autoregression kernels that are close enough in some metric, the two resulting Poisson autoregressions are close.
 \subsection{Continuity of the Poisson autoregression with respect to the kernels}
Let $(\phi^{(t)}_k)_{t\in \Z, k \in \N^*}$ and $(\bar \phi^{(t)}_k)_{t\in \Z, k \in \N^*}$ be two matrix kernels such that 
\begin{equation}
\label{ineq:dom}
\max(|\phi^{(t)}_k|,|\bar \phi^{(t)}_k|) \preceq A_k, \text{ for any }t\in \{1,\cdots,p\} \text{ and }k\in \N^*,
\end{equation}
for a family of non-negative matrices satisfying the stability condition $\rho \left( L\sum_{k\geq 1}A_k\right)<1$. \\
We build two periodic multivariate Poisson autoregressions (of either Type I or Type II) $(Y)_{t\in \N}$ and $(\bar Y_t)_{t\in \N}$ using kernels $\phi$ and $\bar \phi$, respectively. The two processes are constructed using the same underlying randomness, that is the same Poisson processes $(N_t)_{t\in \Z}$. We now give a control on the distance between $Y$ and $\bar Y$ as a function of the distance between $\phi$ and $\bar \phi$. 
\begin{Proposition}
    \label{prop:continuity}
    Let $\psi$ be an $L-$Lipschitz non-negative function and $(\mu_t)_{t\in \N}$ be a periodic family of vectors in $\R^d$. 
    Let $\phi$ and $\bar \phi$ be two kernels satisfying (\ref{ineq:dom}) for a family of matrices $(A_k)_{k \in \N^*}$ such that $\rho \left (L\sum_{k\geq 1} A_k \right)<1$.\\
    Given a family $N$ of \textit{iid} unit intensity Poisson processes:
    \begin{enumerate}
        \item If $Y$ (resp. $\bar Y$) is constructed according to equation \eqref{def:TypeI} using kernel $\phi$ (resp. $\bar \phi$), then for any $t \in \N$ and any $r\geq 1$
        $$\E^{1/r} \left[ |Y_t-\bar Y_t|^r\right] \preceq C\left(\max_{v=1,\cdots,p}\|\phi^{(v)}-\bar \phi^{(v)}\|_1\right)^{1/r} \mathds 1.$$
        \item If $Y$ (resp. $\bar Y$) is constructed according to equation \eqref{def:TypeII} using kernel $\phi$ (resp. $\bar \phi$), then for any $t \in \N$ and any $r \geq 1$
        $$\E^{1/r} \left[ |Y_t-\bar Y_t|\right] \preceq C \left(\sum_{v=1}^p\|\phi^{(v)}-\bar \phi^{(v)}\|_1 \right)^{1/r}\mathds 1,$$
    \end{enumerate}
    where $C$ is a positive constant that does not depend on $t$, $\|\phi^{(v)}-\bar \phi^{(v)}\|_1=\sum_{k\geq 1}|\phi^{(v)}_k-\bar \phi^{(v)}_k|$ and $\mathds 1$ is the $d-$dimensional vector whose coefficients are $1$.
\end{Proposition}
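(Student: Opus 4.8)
The plan is to couple $Y$ and $\bar Y$ through the common Poisson family $N$ and run a renewal estimate, doing the first moment first and bootstrapping to order $r$. Since $\lambda_t,\bar\lambda_t$ are $\mathcal F_{t-1}$-measurable, conditionally on $\mathcal F_{t-1}$ each coordinate $N^{(i)}_t(\lambda^{(i)}_t)-N^{(i)}_t(\bar\lambda^{(i)}_t)$ is, up to a sign, a Poisson variable of mean $\delta^{(i)}_t:=|\lambda^{(i)}_t-\bar\lambda^{(i)}_t|$; hence, coordinatewise, $\E[|Y_t-\bar Y_t|^r\mid\mathcal F_{t-1}]=T_r(\delta_t)$, where $T_r$ is the Touchard polynomial, i.e.\ the degree-$r$ polynomial with $T_r(0)=0$ and leading coefficient $1$ characterised by $T_r(\theta)=\E[P^r]$ for $P$ Poisson of mean $\theta$; in particular $\E[|Y_t-\bar Y_t|\mid\mathcal F_{t-1}]=\delta_t$. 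Writing $\Delta^{(r)}_t:=\bigl(\E|Y_t-\bar Y_t|^r\bigr)^{1/r}$ and $m^{(r)}_t:=\bigl(\E|\lambda_t-\bar\lambda_t|^r\bigr)^{1/r}$ (vectors of $\R^d$, powers and ensuing inequalities taken componentwise), these behave as $L^r$-norms coordinate by coordinate, so Minkowski's inequality applies to them. For the write-up one may treat the kernel distance as a scalar, the matrix case being entrywise identical.

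The $L$-Lipschitz property of $\psi$ and the triangle inequality, with \eqref{ineq:dom}, give (for Type I; Type II is identical after replacing the superscript $t$ by $t-k$)
$$|\lambda_t-\bar\lambda_t|\ \preceq\ L\sum_{k\ge 1}A_k\,|Y_{t-k}-\bar Y_{t-k}|\ +\ L\,Q_t,\qquad Q_t:=\sum_{k\ge 1}\bigl|\phi^{(t)}_k-\bar\phi^{(t)}_k\bigr|\,|\bar Y_{t-k}|.$$
As $k$ varies the lagged season runs over a full period, so $\sum_{k\ge1}\bigl|\phi^{(\cdot)}_k-\bar\phi^{(\cdot)}_k\bigr|$ is dominated by $\varepsilon:=\max_{v}\|\phi^{(v)}-\bar\phi^{(v)}\|_1$ in the Type I case and by $\varepsilon:=\sum_{v}\|\phi^{(v)}-\bar\phi^{(v)}\|_1$ in the Type II case. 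A convexity (Jensen) bound on the normalised weights of $|\phi^{(\cdot)}-\bar\phi^{(\cdot)}|$, combined with the uniform moment bound $\E[(\bar Y^{(j)}_s)^r]\preceq C$ from Proposition \ref{prop:stationary}(3) (which applies to $\bar Y$ too, its kernel also being dominated by $A_k$), yields $\bigl(\E Q_t^r\bigr)^{1/r}\preceq C\,\varepsilon\,\mathds 1$ for every $r\ge1$, uniformly in $t$. We may assume the kernel distances are at most one, so that $\varepsilon\preceq\varepsilon^{1/r}$ (otherwise the claimed bound follows directly from Proposition \ref{prop:stationary}(3) applied to $Y$ and $\bar Y$).

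For $r=1$ this closes at once: taking expectations and using $\E[|Y_t-\bar Y_t|\mid\mathcal F_{t-1}]=\delta_t$ gives $\Delta^{(1)}_t\preceq L\sum_{k\ge1}A_k\,\Delta^{(1)}_{t-k}+LC\varepsilon\mathds 1$ with $\Delta^{(1)}_0=0$; since $\rho\bigl(L\sum_{k\ge1}A_k\bigr)<1$, the nonnegative matrix $\bigl(\Id-L\sum_{k\ge1}A_k\bigr)^{-1}=\sum_{n\ge0}\bigl(L\sum_{k\ge1}A_k\bigr)^n$ is well defined, and an induction on $t$ gives $\Delta^{(1)}_t\preceq\bigl(\Id-L\sum_{k\ge1}A_k\bigr)^{-1}LC\varepsilon\mathds 1$ uniformly in $t$, whence also $m^{(1)}_t\preceq C'\varepsilon\mathds 1$. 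For $r\ge2$ I induct on $r$, assuming $\Delta^{(j)}_t,m^{(j)}_t\preceq C_j\varepsilon^{1/j}\mathds 1$ for all $s,t$ and all $j<r$ (equivalently $\E|Y_t-\bar Y_t|^j,\E|\lambda_t-\bar\lambda_t|^j\preceq C_j'\varepsilon\mathds 1$). The decisive algebraic point is to write $T_r(\theta)=\theta^r+\sum_{j=1}^{r-1}c_{r,j}\theta^j$ with $c_{r,j}\ge0$: the lower-order part contributes $\sum_{j<r}c_{r,j}\E\delta_t^j\preceq C\varepsilon\mathds 1$ by the induction hypothesis, while $\E\delta_t^r=(m^{(r)}_t)^r$; hence $\E|Y_t-\bar Y_t|^r\preceq(m^{(r)}_t)^r+C\varepsilon\mathds 1$, so $\Delta^{(r)}_t\preceq m^{(r)}_t+C^{1/r}\varepsilon^{1/r}\mathds 1$. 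Separately, Minkowski's inequality applied coordinatewise to the displayed decomposition of $\lambda_t-\bar\lambda_t$ gives $m^{(r)}_t\preceq L\sum_{k\ge1}A_k\,\Delta^{(r)}_{t-k}+L\bigl(\E Q_t^r\bigr)^{1/r}\preceq L\sum_{k\ge1}A_k\,\Delta^{(r)}_{t-k}+LC\varepsilon\mathds 1$, with no constant worse than $L$ in front of the convolution. Combining, $\Delta^{(r)}_t\preceq L\sum_{k\ge1}A_k\,\Delta^{(r)}_{t-k}+C_r\varepsilon^{1/r}\mathds 1$ with $\Delta^{(r)}_0=0$, and the $r=1$ resolvent argument yields $\Delta^{(r)}_t\preceq C'_r\varepsilon^{1/r}\mathds 1$; plugging back in bounds $m^{(r)}_t$ similarly, closing the induction. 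The two assertions of the proposition are precisely these bounds, with $\max_v$ for Type I and $\sum_v$ for Type II.

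The step I expect to be the main obstacle is ensuring the recursion stays a contraction with the \emph{given} spectral radius $\rho\bigl(L\sum_{k\ge1}A_k\bigr)<1$. A naive inequality $T_r(\theta)\le B_r(\theta+\theta^r)$ (with $B_r$ the $r$-th Bell number) would insert a factor $B_r^{1/r}>1$ in front of the convolution for $\Delta^{(r)}_t$, which our hypothesis does not control; peeling off only the single leading monomial $\theta^r$ and routing it through Minkowski — so that just the harmless lower-order monomials, already $O(\varepsilon)$ by induction, are discarded — is what keeps the scheme self-consistent. The exponent $1/r$ in the statement is then intrinsic: $\E|Y_t-\bar Y_t|^r$ is only of order $\varepsilon$ rather than $\varepsilon^r$, because the two coupled chains agree except on an event of probability $O(\varepsilon)$, on which their discrepancy is of order one. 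The remaining work — the Jensen bound on $Q_t$, the reduction to small kernel distances, and the entrywise handling of the vector/matrix inequalities under $\preceq$ — is routine.
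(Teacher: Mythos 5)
Your proposal is correct and follows essentially the same route as the paper's proof: coupling through the common Poisson processes, identifying the conditional moments of $|Y_t-\bar Y_t|$ as the Poisson (Touchard/Stirling) polynomial in $|\lambda_t-\bar\lambda_t|$, inducting on $r$ while peeling off only the leading monomial and routing it through Minkowski and the Lipschitz decomposition, and closing the recursion with the resolvent of $L\sum_{k\geq 1}A_k$ (the paper's Lemma \ref{lmm:convolution}), with the seasonal kernel distance bounded by the $\max_v$ (Type I) or $\sum_v$ (Type II) norm. The only cosmetic differences are that you handle the $\varepsilon$ versus $\varepsilon^{1/r}$ comparison explicitly, whereas the paper absorbs it into the constant, and you invoke a Jensen-type bound where the paper simply uses Minkowski for the remainder term.
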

\begin{proof}
The proof can be found in Appendix \ref{sec:prop:continuity}.
\end{proof}
We point out that the constant vector $C$ can be explicitly expressed in terms of $\left(I-L\sum_{k\geq} A_k\right)$, $\psi$ and $\max_{v=1,\cdots,p} \mu_v$. We also point out that the result still holds for the strictly periodically stationary processes. We now propose the Poisson autoregression with an exponential polynomial as a universal approximation, but first, we study some of its properties. 
\subsection{The Markov properties of the Poisson autoregression with an exponential polynomial kernel}
\label{section:markov}
To fix the ideas, we consider a linear Poisson autoregression with a constant baseline intensity and kernel, that is 
\begin{equation}
    \label{eq:constant}
    \begin{cases}    
        Y_t&=N_t \left( \lambda_t\right)\\
        \lambda_t&=\mu + \sum_{k=1}^{t-1}\phi_{t-k} Y_k,
    \end{cases}
\end{equation}
where $\mu \in \R_+^d$ and $\phi$ is a family of non-negative matrices such that $\rho(\sum_{k\geq 1}\phi_k)<1$. Clearly, $(Y_t)_{t\in \N}$ is not a Markov chain in general, as the distribution of $Y_t$ depends on the entire history until $t-1$. This is problematic for instance for the computation of the likelihood, which for a sequence of observation of length $T$ would cost $O(T^2)$ operations. A Markov approximation proposed in \cite{DFT} based on the results proven in \cite{DW} would be the sequence $(Y^{(m)}_t,\cdots, Y^{(m)}_{t-m})_{t\in \N}$, where  
\begin{equation*}
    \begin{cases}
        Y^{(m)}_t&=N_t \left( \lambda^{(m)}_t\right)\\
        \lambda^{(m)}_t&=\mu + \sum_{k=t-m}^{t-1}\phi_{t-k} Y^{(m)}_k.
    \end{cases}
    \end{equation*}
    The Markov property helps reduce the cost of computation of the likelihood to $O(mT)$.
    As we saw in Proposition \ref{prop:continuity}, to guarantee that such an approximation is good, one must ensure that the remainder $\sum_{k>m}\phi_k$ is small. In case the autoregression has a long memory, we would need a large $m$, that can be of order $T$, which does not lead to a significant reduction in the cost of computation.
    
    The alternative Markov approximation is based on the following observation: If the kernel is of the form $$\phi_k=G e^{-\frac{k}{\tau}},$$
    where $G$ is a non-negative matrix and $\tau>0$, then the intensity of \eqref{eq:constant} can be expressed as
    \begin{align*}
        \lambda_t&=\mu + \sum_{k=1}^{t-1}G e^{-\frac{t-k}{\tau}} Y_k\\
        &=\mu + e^{-\frac{1}{\tau}}\left(\sum_{k=1}^{t-2}G e^{-\frac{t-1-k}{\tau}} Y_k + G Y_{t-1}\right)\\
        &=(1-e^{-\frac{1}{\tau}})\mu + e^{-\frac{1}{\tau}} \lambda_{t-1} + e^{-\frac{1}{\tau}}G Y_{t-1}.
    \end{align*}
Hence, if the kernel is a geometric sequence, then the vector $(Y,\lambda)$ is a standard Markov chain. This is exactly the linear multivariate count autoregression model studied in \cite{FSTD}, and its stationarity and ergodicity are proven using standard Markov techniques on a perturbed approximation of the chain. This process is also a multivariate version of the INGARCH $(1,1)$ model \cite{FLO}. In fact, we have just proven that the INGARCH $(1,1)$ process is identical to the INGARCH $(+\infty,0)$ with exponential regression coefficients. 
\begin{Remark}
\label{remark:abd}
    The linear periodic INGARCH$(1,1)$ model introduced in \cite{ABD} is equivalent to the Type II periodic Poisson autoregression with an exponential kernel. Indeed, if $\phi^{(t)}_k=\nu_{t+1} e^{-\frac{k}{\tau}}$ then 
    \begin{align*}
        \lambda_t= &\mu+\sum_{k=1}^{t-1}\nu_{k+1}e^{-\frac{t-k}{\tau}}Y_k\\
        =&\mu+ \left(e^{-\frac{1}{\tau}}\sum_{k=1}^{t-2}\nu_{k+1}e^{-\frac{t-1-k}{\tau}}Y_k + \nu_te^{-\frac{1}{\tau}}Y_{t-1}\right)\\
        =&\mu (1-e^{-\frac{1}{\tau}}) + e^{-\frac{1}{\tau}}\lambda_{t-1} + \nu_t e^{-\frac{1}{\tau}}Y_{t-1}.
    \end{align*}
\end{Remark}

We now show that the exponential polynomial kernels of the form
$$\phi_k=\sum_{m=1}^{q} G^{(m)} e^{-m\frac{k}{\tau}},$$
where $(G^{(m)})_{m=1,\cdots,q}$ is a family of matrices, also imply that the time series are Markov, up to adding some auxiliary processes. In this case, the intensity is of the form 
\begin{align*}
    \lambda_t&=\mu + \sum_{k=1}^{t-1} \sum_{m=1}^{q} G^{(m)} e^{-m\frac{t-k}{\tau}} Y_k\\
    &=\mu +  \sum_{m=1}^{q} G^{(m)} \sum_{k=1}^{t-1}e^{-m\frac{t-k}{\tau}} Y_k\\
    &=\mu +  \sum_{m=1}^{q} G^{(m)}\xi^{(m)}_t,
\end{align*}
where $\xi_t^{(m)}:=\sum_{k=1}^{t-1}e^{-m\frac{t-k}{\tau}} Y_k$ is the $m$-th auxiliary process. As before, we have that
$$\xi^{(m)}_t=e^{-\frac{m}{\tau}}\left (\xi^{(m)}_{t-1}+Y_{t-1}\right),$$
for $m\in \{1,\cdots,q\}$ and $t>1$. Hence, \eqref{eq:constant} is equivalent to 
\begin{equation*}
    \begin{cases}
    Y_t&=N_t \left(\mu+ \sum_{m=1}^{q} G^{(m)}\xi^{(m)}_t\right)\\
    \xi^{(m)}_t&=e^{-\frac{m}{\tau}} \xi^{(m)}_{t-1} + e^{-\frac{m}{\tau}} Y_{t-1}, \quad m\in \{1,\cdots,q\},
    \end{cases}
\end{equation*}
clearly implying that $(Y, \xi^{(1)},\cdots,\xi^{(q)})$ is a Markov chain. One of the advantages of the Markov property is that the cost of simulation and likelihood computation for a sample of length $T$ is of order $O(qT)$, which in the case of $q<<T$ leads to a significant reduction in computation time.  We now state our universal approximation result for periodic Poisson autoregressions with Markov chains.
\begin{Theorem}
\label{thm:4.3}
    Let $(\phi^{(t)}_k)_{t\in \Z, k\in \N^*}$ be a family of matrix kernels satisfying the stability Assumption \ref{ass:stability}. Let $\tau>0$ be a fixed characteristic time and let $\varepsilon >0$.
    \begin{enumerate}
        \item If $Y$ is a multivariate Poisson autoregression of Type I periodicity given in \eqref{def:TypeI}, then there exists $q \in \N^*$ and a family of matrices $(G_t^{(m)})_{t \in \Z, m=1,\cdots,q}$ periodic in $t$ such that 
        $$\E [|Y_t-\bar Y_t|] \preceq \varepsilon \mathds 1,$$
        where $(\bar Y,\xi^{(1)},\cdots,\xi^{(q)})$ is the Markov chain
        \begin{equation*}
            \begin{cases}
                \bar Y _t &= N_t \left(\psi\left( \mu_t + \sum_{m=1}^{q} G_t^{(m)} \xi^{(m)}_t\right)\right)\\
                \xi^{(m)}_t &= e^{-\frac{2m+1}{\tau}} \xi^{(m)}_{t-1} + e^{-\frac{2m+1}{\tau}} \bar Y_{t-1}, \quad m\in \{1,\cdots,q\}.
            \end{cases}
        \end{equation*}
        \item 
        If $Y$ is a multivariate Poisson autoregression of Type II periodicity given in \eqref{def:TypeII}, then there exists $q \in \N^*$ and a family of matrices $(J_t^{(m)})_{t\in \Z, m=1,\cdots,q}$ periodic in $t$ such that 
        $$\E [|Y_t-\bar Y_t|] \preceq \varepsilon \mathds 1,$$
        where $(\bar Y,\zeta^{(1)},\dots,\zeta^{(q)})$ is the Markov chain
        \begin{equation*}
            \begin{cases}
                \bar Y _t &= N_t \left( \psi \left( \mu_t + \sum_{m=1}^{q}  \zeta^{(m)}_t\right) \right)\\
                \zeta^{(m)}_t &= e^{-\frac{2m+1}{\tau}} \zeta^{(m)}_{t-1} + e^{-\frac{2m+1}{\tau}} J_{t-1}^{(m)} \bar Y_{t-1}, \quad m\in \{1,\cdots,q\}.
            \end{cases}
        \end{equation*}
    \end{enumerate}
\end{Theorem}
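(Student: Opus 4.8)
The plan is to reduce the statement to a purely analytic approximation of the kernel by an exponential polynomial, using the kernel--continuity bound of Proposition~\ref{prop:continuity}, and then to read off the Markov structure from the algebra already performed in Section~\ref{section:markov}. Fix $\tau>0$, $\varepsilon>0$, and set $\rho_m:=e^{-(2m+1)/\tau}\in(0,1)$, so $\rho_m\downarrow0$. The core step will be to produce, for each season $v\in\{1,\dots,p\}$ and a common order $q\in\N^*$, matrices $G^{(m)}_v$ ($m=1,\dots,q$) such that the kernel $\bar\phi^{(v)}_k:=\sum_{m=1}^q\rho_m^k G^{(m)}_v$ makes $\max_v\|\phi^{(v)}-\bar\phi^{(v)}\|_1$ (a $d\times d$ matrix) as small as desired. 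Granting this, set $A'_k:=A_k+\max_v|\phi^{(v)}_k-\bar\phi^{(v)}_k|$: then $\max(|\phi^{(v)}_k|,|\bar\phi^{(v)}_k|)\preceq A'_k$ for all $v$, and $\sum_kA'_k=\sum_kA_k+\max_v\|\phi^{(v)}-\bar\phi^{(v)}\|_1$, so by continuity of the (Perron) spectral radius on nonnegative matrices together with Assumption~\ref{ass:stability} one keeps $\rho(L\sum_kA'_k)<1$ once the approximation is fine enough. Proposition~\ref{prop:continuity} with $r=1$ then gives $\E[|Y_t-\bar Y_t|]\preceq C\,\big(\max_v\|\phi^{(v)}-\bar\phi^{(v)}\|_1\big)\mathds 1$ uniformly in $t$, where $\bar Y$ is the Poisson autoregression of the same type built from $\bar\phi$ and the same Poisson processes $N$; choosing the approximation small enough makes the right--hand side $\preceq\varepsilon\mathds 1$.

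For the approximation itself I would argue entrywise: it suffices to approximate in $\ell_1(\N^*)$ a scalar sequence $(a_k)_{k\ge1}$ with $0\le a_k\le(A_k)_{ij}$ by a finite combination $\sum_{m=1}^qg_m\rho_m^k$. Truncating at a large $K$ leaves a finitely supported sequence within $\sum_{k>K}(A_k)_{ij}$ of $(a_k)$; and the closed linear span in $\ell_1(\N^*)$ of the geometric sequences $\{(\rho_m^k)_{k\ge1}:m\ge1\}$ is all of $\ell_1(\N^*)$, because a $y\in\ell_\infty(\N^*)=\ell_1(\N^*)^*$ annihilating every such sequence makes $F(z):=\sum_{k\ge1}y_kz^k$ holomorphic on the open unit disc with $F(\rho_m)=0$ for all $m$ and $\rho_m\to0$, hence $F\equiv0$ and $y=0$; Hahn--Banach concludes. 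Carrying this out simultaneously over the finitely many entries $(i,j)$ and seasons $v$ (enlarging $q$ and padding with zero matrices) yields the matrices $G^{(m)}_v$.

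For the Markov form I would then repeat verbatim the computation of Section~\ref{section:markov}. In the Type~I case the approximating kernel is $\bar\phi^{(t)}_{t-k}=\sum_{m=1}^q\rho_m^{t-k}G^{(m)}_t$; with $\xi^{(m)}_t:=\sum_{k=1}^{t-1}\rho_m^{t-k}\bar Y_k$ one obtains $\xi^{(m)}_t=\rho_m(\xi^{(m)}_{t-1}+\bar Y_{t-1})$ and $\bar\lambda_t=\psi\big(\mu_t+\sum_{m=1}^qG^{(m)}_t\xi^{(m)}_t\big)$, which is the first displayed chain (rates $\rho_m=e^{-(2m+1)/\tau}$). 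In the Type~II case the kernel reads $\bar\phi^{(t-k)}_k=\sum_{m=1}^q\rho_m^k J^{(m)}_{t-k}$ with $J^{(m)}_v:=G^{(m)}_v$, i.e. the season index now sits at the past time; setting $\zeta^{(m)}_t:=\sum_{k=1}^{t-1}\rho_m^{t-k}J^{(m)}_k\bar Y_k$ gives $\zeta^{(m)}_t=\rho_m(\zeta^{(m)}_{t-1}+J^{(m)}_{t-1}\bar Y_{t-1})$ and $\bar\lambda_t=\psi\big(\mu_t+\sum_{m=1}^q\zeta^{(m)}_t\big)$, the second displayed chain; matching the initializations of the auxiliary processes with the empty history of $Y$ is routine.

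The only genuinely delicate point is the density step combined with preservation of stability: the holomorphic--function argument gives density in $\ell_1$ but no quantitative handle on $q$ or on the coefficients $g_m$, so one must make sure the enlargement $\sum_kA'_k-\sum_kA_k$ of the dominating sequence is controlled by the $\ell_1$ error --- which it is, by the identity above --- so that Assumption~\ref{ass:stability} survives and Proposition~\ref{prop:continuity} is applicable. Everything else (the Markov algebra, the entrywise and season-by-season bookkeeping, and the initialization) is routine, with Type~II differing from Type~I only in that the periodic matrix multiplies $\bar Y_{t-1}$ inside the update of the auxiliary process rather than multiplying the whole auxiliary vector.
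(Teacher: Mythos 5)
Your proposal is correct and follows the same overall route as the paper: reduce the theorem to an $\ell_1(\N^*)$ approximation of each seasonal kernel by an exponential polynomial, invoke Proposition \ref{prop:continuity} with $r=1$ (part 2, with the sum over seasons, for Type II), and then read off the Markov recursions for $\xi^{(m)}$ and $\zeta^{(m)}$ exactly as in Section \ref{section:markov}. Where you differ is in the two supporting details. First, the density step: the paper's Lemma \ref{lmm:l1_desity} goes through an $\ell_2$ density statement (Lemma \ref{lmm:l2_density}, proved by evaluating the generating function along $e^{-2n/\tau}\to 0$ and inducting on coefficients) and then transfers to $\ell_1$ via Cauchy--Schwarz with the weight $e^{-k/\tau}$; you instead prove $\ell_1$ density directly by duality, noting that an annihilating $y\in\ell_\infty$ gives a holomorphic $F$ on the unit disc vanishing at the points $\rho_m\to 0$, hence $F\equiv 0$ by the identity theorem, and concluding with Hahn--Banach. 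Both are sound; yours is shorter and avoids the truncation/weighting bookkeeping, while the paper's version is constructive enough to suggest the numerical fitting used later (Remark \ref{remark:tau}). Second, the applicability of Proposition \ref{prop:continuity}: the paper simply asserts that $\varepsilon$ can be taken small enough that the approximant is dominated by the same family $A_k$, which is not literally forced by $\ell_1$ closeness; your device of enlarging the dominating family to $A'_k=A_k+\max_v|\phi^{(v)}_k-\bar\phi^{(v)}_k|$ and preserving $\rho\bigl(L\sum_k A'_k\bigr)<1$ by continuity of the spectral radius is the cleaner fix (note only that $\sum_k A'_k-\sum_k A_k$ is bounded by the sum over seasons of the $\ell_1$ errors rather than equal to the maximum, a harmless factor of $p$). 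No gaps beyond these cosmetic points.
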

The proof can be found in Appendix \ref{proof:4.3}.

\begin{Remark}
    In this paper, we approximate a given kernel family $(\phi^{(t)}_k)_{k \in \N^*}$ using a linear combination of odd powers of the exponential $\sum_{m=1}^q G^{(m)}_t e^{-(2m+1)\frac{k}{\tau}}$ because of the proof of Lemma \ref{lmm:l1_desity}. However, it is also possible to use a linear combination of the exponential $\sum_{m=1}^q G^{(m)}_t e^{-(m)\frac{k}{\tau}}$ as an approximation as well.
\end{Remark}

To illustrate the universality of the approximation, we simulate a Type I periodic linear univariate Poisson autoregression of kernel 
$$\phi^{(t)}_k=\frac{(\mathds 1_{t=2\text{ mod}[4]}+\mathds 1_{t=3\text{ mod}[4]})k^{1.5}}{75(1+(0.2 \cdot k )^{3.5})},$$
which has a longer memory than the finite kernels or the exponential kernel due to the tail that vanishes like $k^{-2}$.

Figure \ref{fig:approx_kernel} shows $\phi^2$ (in blue). As we can see, the lag coefficients $\{3,\cdots,7\}$ have a significantly higher impact than the first two lag coefficients. Such an effect is impossible to capture with the exponential kernel (equivalently INGARCH $(1,1)$ extensively studied in the literature). However, in real-life applications, the exponential kernel is not always the one that best fits the data. For instance, for Hawkes processes, that are continuous-time versions of Poisson autoregressions \cite{mahmoud}, it has been shown by Bessy-Roland \textit{et al.} \cite{Bessy-Roland_Boumezoued_Hillairet_2021} that a cyber attack model driven by a Hawkes process with a kernel $\phi(s)=\alpha s e^{-\beta s}$ fits the data better than a Hawkes process driven by a simple exponential. 
\begin{figure}[h!]
    \centering
    \includegraphics[width=1\linewidth]{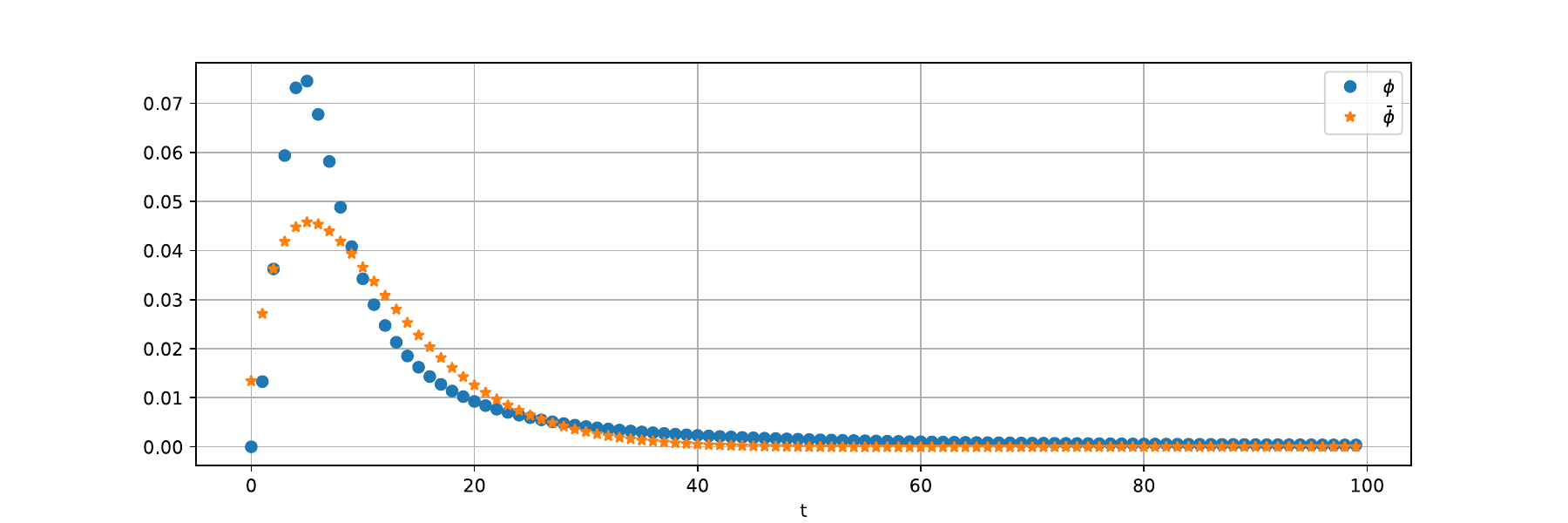}
    \caption{The $\ell_1(\N)$ approximation of $\phi^2$ using $q=3$ exponential functions with $\tau=36$, in orange.}
    \label{fig:approx_kernel}
\end{figure}
Before proceeding to the Poisson autoregressions, we state a couple of remarks on the $\ell_1(\N^*)$ approximation of kernels:
\begin{Remark}
\label{remark:tau}
\begin{enumerate}
    \item While in theory the exponential polynomials are dense in $\ell_1(\N^*)$ for any $\tau>0$, some values of $\tau$ will perform better than others for a fixed number of exponentials $q$. Throughout this paper we use this method to select $\tau$:  We fix $T_c$ as a cutoff time after which the past becomes negligible. We pick $\tau=\frac{3}{5}T_c$ which ensures that all of the exponentials are below $e^{-5}$ at $t=T_c$. In Figure \ref{fig:approx_kernel}, we chose $T_c=60$ yielding $\tau=36$.
    \item Computing the best $\ell_1(\N^*)$ approximation can be done either by inverting the system that emerges from setting the gradient of the $\ell_2(\N^*)$ distance to zero (\textit{cf.} Lemma \ref{lmm:l2_density}), or directly numerically. The approximation on Figure \ref{fig:approx_kernel} is obtained by numerically minimising the $\ell_1(\N^*)$ using the method \texttt{COBYLA} in \texttt{scipy.optimize.minimize}.
\end{enumerate}
\end{Remark}
In Figure \ref{fig:approx} we simulate a Type I periodic Poisson autoregression with kernel $\phi$ (in blue) as well as a trajectory with the same underlying randomness $N$ with the kernel $\bar \phi$ that is the best approximation of $\phi$ in $\ell_1(\N^*)$ with $q=3$ exponentials (in orange) and using the truncated kernel $\hat \phi^{(t)}_k=\phi^{(t)}_k \mathds 1_{k\leq 3}$ (in green). The approximation of Poisson autoregression of infinite order with finite order Markov chains using truncation has been proposed in \cite{DFT}. We notice that while both $\bar Y$ and $\hat Y$ cost $O(3T)$ operations to simulate (compared to $O(T^2)$ for $Y$), $\bar Y$ approximates $Y$ significantly better than $\hat Y$. For instance, the cumulative count error for exponential polynomial approximation is $\sum_{k=1}^{120}|Y_k-\bar Y_k|=9$, whereas for the truncated kernel $\sum_{k=1}^{120}|Y_k-\hat Y_k|=39$. 

\begin{figure}[h!]
    \centering
    \includegraphics[width=1\linewidth]{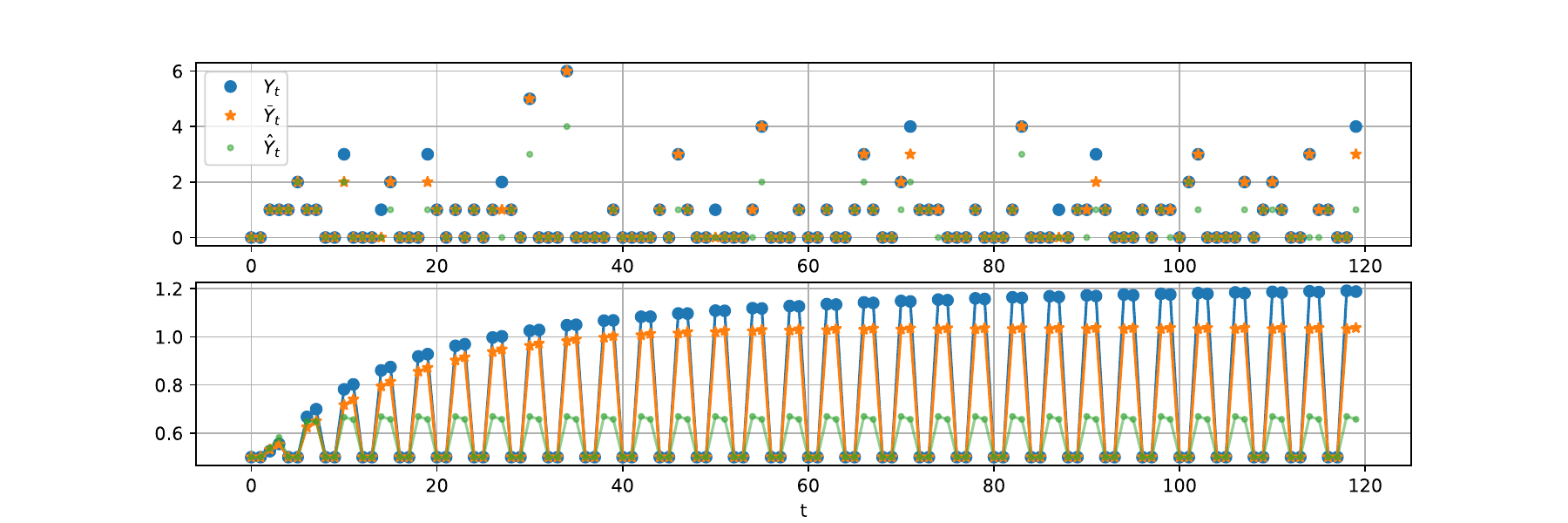}
    \caption{Upper: Simulation of the Poisson autoregression with the original kernel in blue, as well as its approximation using an exponential polynomial of order $q=3$ in orange and truncating at $3$ regression coefficients. Lower: $\E[Y_t], \E[\bar Y_t]$ and $\E[\hat Y_t]$ in blue, orange and green respectively. Note how $\E[Y_t]$ reaches its periodically-stationary regime slower than $\E[\bar Y_t]$ due to its heavier tail.}
    \label{fig:approx}
\end{figure}

\begin{Remark}
\label{Remark:parsimonious}
    To make the model more parsimonious, it is possible to parametrise the periodically varying coefficients by a linear combination of sine and cosine functions 
    $$G^{(m)}_t=\sum_{j=1}^r \gamma^{(m)}_j + \gamma_j^{(m)'}\sin\left(\frac{2 \pi t}{p}\right)+ \gamma_j^{(m)''}  \cos \left(\frac{2 \pi t}{p}\right).$$
    This reparametrisation is particularly interesting if $3r <<p$. One can, for instance, think of a model of daily count data  with a yearly seasonality. In this case, $p=365$ which can be significantly reduced if the bulk of each coefficient can be captured by a few trigonometric functions. 
\end{Remark}
\section{Statistical inference}
\label{sec:inference}
The estimation of non-periodic multivariate count series has been studied in the literature using different methods: Likelihood maximisation (multivariate INGARCH$(1,1)$ models \cite{FSTD,SKK}, PNAR$(p)$ process \cite{ArFok}) conditional least squares (multivariate INAR$(p)$ \cite{Latour,Kirchner_multi,EDD}, we point that the last two view the count series as a proxy for Hawkes processes) and expectation maximisation \cite{SCN}, which has been shown to perform better than the former two methods in case the kernel is exponential. \\
In this section, we study the parametric inference of the periodic Poisson autoregression with an exponential polynomial kernel using the maximum likelihood estimation (MLE). Given a sample of observations $\{Y_t\}_{t=1,\cdots,pT}$ of $d-$variate integer-valued vectors, the Poisson log-likelihood can be written  as 
\begin{align}
    \label{eq:likelihood}
    L _{T}(\theta) &= \frac{1}{T}\sum_{t=1}^{pT}\sum_{i=1}^d Y^{(i)}_t \log \lambda_t^{(i)}(\theta) - \lambda_t^{(i)}(\theta)\\
    &=\frac{1}{T}\sum_{t=1}^{pT}\sum_{i=1}^d\ell_t^{(i)}(\theta),  \nonumber
\end{align}
where $\theta$ is a parameter to be estimated, see for instance \cite{FSTD} or \cite{SKK}. In our case, we would like to estimate the baseline pre-intensities $\mu_v$ as well as the kernels $(\phi^{(v)}_k)_{k\in \N^*}$ for $v=1,\cdots,p$, giving
$$\theta=(\mu_1,\cdots,\mu_v,(\phi^{(1)}_k)_{k\in \N^*},\cdots,(\phi^{(v)}_k)_{k\in \N^*}),$$
which is an infinite dimensional parameter.
We focus on the Type I periodic Poisson autoregressions with an exponential polynomial kernel, that is 
\begin{equation*}
    \begin{cases}
        Y_t&=N_t\left(\lambda_t\right),\\
        \lambda_t &= \psi \left(\mu_t+\sum_{k=1}^{t-1}\left(\sum_{m=1}^{q} G_t^{(m)} e^{-m\frac{t-k}{\tau}}\right)Y_k\right)\\
        &=\psi \left(\mu_t+\sum_{k=1}^{t-1}\left(\sum_{m=1}^{q} G_t^{(m)} e^{-m\frac{k}{\tau}}\right)Y_{t-k}\right)
    \end{cases}
        \text {for } t \in \N,
\end{equation*}
for a family of matrices $G_t^{(m)}$ periodic in $t$ and a fixed time parameter $\tau>0$. We point out that the Type II periodicity can be treated in a similar way, \textit{mutatis mutandis}.
\subsection{Properties of the Markov maximum likelihood estimator}
In Section \ref{section:markov} we showed that the Poisson autoregression with exponential polynomial kernels can be represented as the Markov chain of order $q$ 
\begin{equation}
\label{eq:markov_MLE}
    \begin{cases}
        Y_t&=N_t\left(\psi \left(\mu_t+\sum_{m=1}^{q} G^{(m)}_t\xi^{(m)}_t\right)\right)\\
        \xi^{(m)}_t&=e^{-\frac{2m+1}{\tau}}\xi^{(m)}_{t-1}+e^{-\frac{2m+1}{\tau}}Y_{t-1}, \quad m\in \{1,\cdots,q\}.
    \end{cases}
\end{equation}
Throughout this section, $\tilde Y$  and $\tilde \xi$ denote the counts and the auxiliary process in the periodically stationary regime. $Y$ and $\xi$ denote the solutions with a fixed history, usually taken to be empty.\\ 
The likelihood in the periodically stationary regime is denoted by $\tilde L$ and the likelihood of the solution with a fixed history is denoted by $L$.

We remind the reader that due to the density of exponential polynomials in $\ell_1(\N^*)$ and the continuity of Poisson autoregressions with respect to the kernel, we will assume that the observed counts come from a Poisson autoregression with an exponential polynomial kernel. This will introduce a misspecification error in the estimation that we will examine empirically. To the best of our knowledge, Douc \textit{et al.} \cite{DDM} as well as Armillotta \textit{et al.} \cite{ALL} proved some results for misspecified MLE for the observation-driven (closely related to INGARCH(1,1)) count series, but the case of general Poisson autoregressions with infinite memory is yet to be thoroughly studied.

In the well-specified case, we seek to estimate the baseline intensities $(\mu_v)_{v=1,\cdots,p}$ as well as the regression matrices $(G^{(m)}_v)_{v=1,\cdots,p}$. The jump rate function $\psi$ is supposed to be known, as well as the order $q\in \N^*$ and the time parameter $\tau>0$. The reason for which we do not seek to estimate $\tau$ stems from the density of exponential polynomials in $\ell_1(\N^*)$ for \textit{any} given $\tau>0$. 

For any $v\in \{1,\cdots,p\}$, the kernel $(\phi^{(v)}_k)_{k\in \N^*}$ will be estimated via its parameters $(G^{(m)}_v)_{m=1,\cdots,q}$. This will take us from an optimisation problem over the infinite dimensional space $(\ell_1(\N^*))^{pd^2}$ to an optimisation problem over a finite dimensional space.

Following the lines of \cite{almohaimeed}, the parameter vector for the MLE associated with Equation \eqref{eq:markov_MLE} is formed by the concatenation of the parameters across all seasons, that is 
\begin{equation}
    \label{eq:theta}
    \gamma^*=\left(\mu_1,\cdots,\mu_p,G^{(1)}_1,\cdots,G^{(q)}_1,\cdots,G^{(1)}_2,\cdots,G^{(q)}_2,\cdots, G^{(1)}_p,\cdots,G^{(q)}_p\right) \in \Gamma \subset \R^{p(1+d^2q)},
\end{equation}
$\Gamma$ here being a compact parameter space. 
For this process, the reparametrised Markov log-likelihood is of the form 
\begin{equation}
\label{eq:likelihood_markov}
\begin{cases}
    L_{T}(\gamma) =& \sum_{t=1}^{pT}\sum_{i=1}^d \tilde Y^{(i)}_t\log \left(\psi \left(\mu^{(i)}_t + \left(\sum_{m=1}^{q}G^{(m)}_t\xi^{(m)}_t\right)^{(i)}\right)\right)\\
    &-\psi \left(\mu^{(i)}_t + \left(\sum_{m=1}^{q}G^{(m)}_t\xi^{(m)}_t\right)^{(i)}\right)\\
    \xi^{(m)}_t=&e^{-\frac{2m+1}{\tau}}\left(\xi^{(m)}_{t-1}+\tilde Y_{t-1}\right), \quad m\in\{1,\cdots,q\}\\
    \xi^{(m)}_0=&0,
    \end{cases}
\end{equation}
and can be computed in a time of order $O(qT)$, as opposed to $O(T^2)$ for a Poisson autoregression with a general kernel.\\
The MLE is then defined as any measurable solution of 
\begin{equation}
\label{eq:def_likelihood}
\gamma_T = \text{argmax}_{\gamma \in \Gamma} L_{T}(\gamma),
\end{equation}
where $L$ is the log likelihood defined in Equation \eqref{eq:likelihood_markov} with an empty history. As we showed in Section \ref{section:stability}, the initial state of the process is asymptotically irrelevant provided the stability assumption is met. We now state the assumptions for the strong consistency of the MLE.

\begin{Assumption}
\label{ass:mle}
    \begin{enumerate}
        \item Stationarity: \eqref{eq:markov_MLE} saisfies the stability Assumption \ref{ass:stability} and has a periodically stationary and ergodic solution $\tilde Y$. 
        \item Positivity: There exists  $\varepsilon >0$ such that $\psi(x)\geq \varepsilon$ for all $x \in\R $.
        \item Compactness: $\Gamma$ is a compact subset of $\R^{p(1+d^2q)}$ and contains the true parameter $\gamma^*$.
        \item Identifiability: If for $\gamma$ and $\gamma'$ we have $\tilde \lambda_v(\gamma)=\tilde \lambda_v(\gamma')$ for $v\in \{1,\cdots,p\},$ then $\gamma=\gamma'$.
    \end{enumerate}
\end{Assumption}
Building on the seminal work of Ahmad and Francq \cite{AF}, we now prove the strong consistency of the MLE. 
\begin{Theorem}
\label{thm:strong-cosistency}
    Assume that $\tilde Y$ is the unique periodically stationary solution of \eqref{eq:markov_MLE} and that Assumption \ref{ass:mle} is in force. Then the MLE estimator defined by \eqref{eq:def_likelihood} satisfies 
    $$\lim_{T\to +\infty}\gamma_T = \gamma^*, \quad \text{almost surely,}$$
    and therefore, if we consider the reconstructed kernels $\phi^{(v)}_{k,T}=\sum_{m=1}^{q} G^{(m)}_{T,v} e^{-m\frac{k}{\tau}}$ for any $v=1,\cdots,p$ we have 
    $$\lim_{T\to +\infty}\mu_{v,T}=\mu_v \quad \text {and}\quad \lim_{T\to +\infty}\phi_{T}^{(v)}=\phi^{(v)} \text{ in } \ell_1(\N^*)$$
    almost surely.
\end{Theorem}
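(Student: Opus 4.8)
The plan is to run the classical Wald-type consistency argument for the maximum likelihood estimator, in the periodic, Markov-approximated setting, following Ahmad and Francq \cite{AF} and its periodic adaptations \cite{almohaimeed, ABD}. Write $\ell_t^{(i)}(\gamma)=Y_t^{(i)}\log\lambda_t^{(i)}(\gamma)-\lambda_t^{(i)}(\gamma)$ and let $\tilde\ell_t^{(i)}(\gamma)$ be the same quantity built from the periodically stationary solution $(\tilde Y,\tilde\xi)$ of \eqref{eq:markov_MLE}, with $\tilde L_T(\gamma)=\sum_{t=1}^{pT}\sum_{i=1}^d\tilde\ell_t^{(i)}(\gamma)$. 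The first step is to replace $L_T$ by $\tilde L_T$: since $Y$ and $\tilde Y$ are driven by the same Poisson processes and the kernel is an exponential polynomial (hence exponentially decaying), Corollary \ref{cor:almost_sure} shows the integer-valued sequence $|\tilde Y_t-Y_t|$ is a.s.\ dominated by $Ce^{-\delta t}$ and therefore vanishes for $t$ past an a.s.-finite random time. Beyond that time the recursions for $\xi^{(m)}$ and $\tilde\xi^{(m)}$ differ only by a term decaying like $e^{-(2m+1)t/\tau}$, so $\sup_{\gamma\in\Gamma}|\lambda_t^{(i)}(\gamma)-\tilde\lambda_t^{(i)}(\gamma)|$ is summable in $t$; combining $|\log a-\log b|\le|a-b|/\varepsilon$ (valid by the positivity Assumption \ref{ass:mle}(2)) with the uniform moment bound of Proposition \ref{prop:stationary}(3) makes $|L_T(\gamma)-\tilde L_T(\gamma)|$ an a.s.-finite random variable, uniformly in $\gamma$, so that $\sup_{\gamma\in\Gamma}\frac1T|L_T(\gamma)-\tilde L_T(\gamma)|\to0$ almost surely.

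Next I would establish a uniform strong law for $\frac1T\tilde L_T$. The seasonally embedded chain attached to $(\tilde Y,\tilde\xi)$ is stationary and ergodic by Assumption \ref{ass:mle}(1); $\gamma\mapsto\tilde\ell_t^{(i)}(\gamma)$ is continuous on the compact $\Gamma$ (affine dependence of $\tilde\lambda_t^{(i)}$ on $(\mu,G)$, then the Lipschitz $\psi$ and $\log$ restricted to $[\varepsilon,\infty)$); and $\psi(x)\le\psi(0)+L|x|$ together with $\tilde\xi^{(m)}$ being linear in the past counts and $\E[\tilde Y_t^r]\le C$ for every $r\ge1$ yields the domination $\E\big[\sup_{\gamma\in\Gamma}|\tilde\ell_v^{(i)}(\gamma)|\big]<\infty$. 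A finite-subcover (bracketing) argument then upgrades the ergodic theorem applied over a full period to $\sup_{\gamma\in\Gamma}|\frac1T\tilde L_T(\gamma)-\mathcal L(\gamma)|\to0$ a.s., where $\mathcal L(\gamma):=\sum_{v=1}^p\sum_{i=1}^d\E[\tilde\ell_v^{(i)}(\gamma)]$. To identify the maximiser, conditioning on $\mathcal F^Y_{v-1}$ and using $\E[\tilde Y_v^{(i)}\mid\mathcal F^Y_{v-1}]=\tilde\lambda_v^{(i)}(\gamma^*)$ gives $\E[\tilde\ell_v^{(i)}(\gamma)-\tilde\ell_v^{(i)}(\gamma^*)]=\E[g(\tilde\lambda_v^{(i)}(\gamma^*),\tilde\lambda_v^{(i)}(\gamma))]$ with $g(a,x)=a\log(x/a)-x+a\le0$ and equality iff $x=a$; hence $\mathcal L(\gamma)\le\mathcal L(\gamma^*)$, with equality forcing $\tilde\lambda_v^{(i)}(\gamma)=\tilde\lambda_v^{(i)}(\gamma^*)$ a.s.\ for all $i,v$, which by the identifiability Assumption \ref{ass:mle}(4) gives $\gamma=\gamma^*$, so $\mathcal L$ has $\gamma^*$ as its unique maximiser.

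Combining these, $\sup_{\gamma\in\Gamma}|\frac1TL_T(\gamma)-\mathcal L(\gamma)|\to0$ a.s.\ with $\mathcal L$ continuous on the compact $\Gamma$ and uniquely maximised at $\gamma^*\in\Gamma$, so the standard argmax argument yields $\gamma_T\to\gamma^*$ almost surely. The claim on the reconstructed kernels is then immediate: each coordinate satisfies $G^{(m)}_{T,v}\to G^{(m)}_v$ and $\mu_{v,T}\to\mu_v$, and $\|\phi^{(v)}_T-\phi^{(v)}\|_1=\sum_{k\ge1}\big|\sum_{m=1}^q(G^{(m)}_{T,v}-G^{(m)}_v)e^{-mk/\tau}\big|\le\sum_{m=1}^q|G^{(m)}_{T,v}-G^{(m)}_v|\,\frac{e^{-m/\tau}}{1-e^{-m/\tau}}\to0$.

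\textbf{Main obstacle.} The delicate part is the uniform control of the logarithmic terms $Y_t^{(i)}\log\lambda_t^{(i)}(\gamma)$ \emph{simultaneously over all $\gamma\in\Gamma$}, needed both for the reduction to the stationary criterion and for the domination in the uniform ergodic theorem. The lower bound comes for free from $\psi\ge\varepsilon$, but the upper control has to be assembled from the Lipschitz growth of $\psi$, the geometric memory induced by the exponential-polynomial kernel — which makes $\lambda_t(\gamma)$ and its stationary counterpart differ by a summable amount and gives moments of $\lambda_t(\gamma)$ bounded uniformly in $t$ and $\gamma$ via Proposition \ref{prop:stationary}(3) — and the compactness of $\Gamma$. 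Once this domination is in hand, the ergodic theorem, the concavity-based identification, and the argmax argument are routine.
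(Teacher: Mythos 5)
Your proposal is correct, but it is organised quite differently from the paper's proof. The paper does not re-derive the Wald machinery at all: it invokes the general strong-consistency theorem of Ahmad--Francq as extended to the periodic setting by Almohaimeed, and reduces the whole proof to checking that the model satisfies that paper's Assumptions A1--A6. Most of those checks are immediate (compactness, identifiability, positivity via $\psi\geq\varepsilon$, moments of all orders via Proposition \ref{prop:stationary}), so the only substantive work in the paper is the analogue of your first step --- showing the initialisation $\xi_0^{(m)}=0$ is asymptotically irrelevant --- which it does by bounding $\sup_{\gamma\in\Gamma}|\tilde\lambda_t(\gamma)-\lambda_t(\gamma)|$ by a geometrically weighted sum of past stationary counts and then applying Markov's inequality and Borel--Cantelli twice (once for the sup itself, once for $\tilde Y_t$ times the sup); the kernel-reconstruction step is the same $\ell_1$ computation you give. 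You instead write out the full argument: reduction to the stationary criterion (using Corollary \ref{cor:almost_sure} and the nice observation that an integer-valued sequence dominated by $Ce^{-\delta t}$ is eventually zero, plus the geometric contraction of the $\xi$-recursion and $\psi\geq\varepsilon$ for the logarithms), a uniform ergodic theorem over the compact $\Gamma$ with an integrable envelope supplied by the all-order moment bounds, the Kullback--Leibler identification $a\log(x/a)-x+a\leq 0$ combined with Assumption \ref{ass:mle}(4), and the standard argmax step. Both routes are sound; the paper's is much shorter because it outsources the uniform LLN, identification and argmax steps to the cited literature, whereas yours is self-contained and in fact covers a slightly more general situation (data generated from an empty history rather than the stationary path), at the cost of having to supply the envelope/bracketing details that the citation otherwise absorbs.
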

\begin{proof}
The proof can be found in Section \ref{sec:thm:strong-cosistency}
\end{proof}
Type I periodicity allows for a factorisation of the log-likelihood, that is the possibility of writing $L_T(\theta)$ as a sum of seasonal log-likelihoods $L^{(v)}_T(\theta_v)$ that depend only on the seasonal parameter $\theta_v$. Indeed, by a change of counter in the sum in \eqref{eq:likelihood_markov}, we have 
\begin{align*}
    L_T(\gamma)=&\frac{1}{T}\sum_{n=1}^{T-1}\sum_{v=1}^p\sum_{i=1}^d \tilde Y^{(i)}_{np+v}\log \left(\psi \left(\mu^{(i)}_{np+v} + \left(\sum_{m=1}^{q}G^{(m)}_{np+v}\xi^{(m)}_{np+v}\right)^{(i)}\right)\right)\\
    &-\psi \left(\mu^{(i)}_{np+v} + \left(\sum_{m=1}^{q}G^{(m)}_{np+v}\xi^{(m)}_{np+v}\right)^{(i)}\right)\\
    =&\frac{1}{T}\sum_{v=1}^p \sum_{n=1}^{T-1}\sum_{i=1}^d \tilde Y^{(i)}_{np+v}\log \left(\psi \left(\mu^{(i)}_{v} + \left(\sum_{m=1}^{q}G^{(m)}_{v}\xi^{(m)}_{np+v}\right)^{(i)}\right)\right)\\
    &-\psi \left(\mu^{(i)}_{v} + \left(\sum_{m=1}^{q}G^{(m)}_{v}\xi^{(m)}_{np+v}\right)^{(i)}\right)\\
    =&\sum_{v=1}^p L^{(v)}_T(\gamma_v),
\end{align*}
where $\gamma_v=(\mu_v,G^{(1)}_v,\cdots,G^{(q)}_v)$. This means that the maximisation of $L$ can be done by maximising each of the seasonal likelihoods 
$$\gamma_{v,T}=\text{argmax}_{\gamma_v \in \Gamma_v} L^{(v)}_T(\gamma_v), \quad v=1,\cdots,p,$$
which is numerically more efficient than maximising $L$. This is the method we use for the numerical illustrations. 

To conclude this subsection, we point out that the likelihoods $L^{(v)}_T$ have another helpful characteristic, beyond their linear computational cost: If the jump-rate function $\psi$ is convex and its logarithm is concave, then $L^{(v)}_T$ is concave and thus has exactly one maximum on compacts. This is the case for instance for the linear autoregression $\phi(x)=x$ or for the softplus $\psi(x)=\ln (1+e^x)$.

\subsection{Numerical illustrations}
Throughout this subsection, we consider a Type I periodic autoregression on a $12$-node network, that is 
$$\lambda_t=\psi \left(\mu_t+\sum_{k=1}^{t-1}(\alpha^{(t)}_{t-k}I_{12}+\beta^{(t)}_{t-k}W)Y_k\right),$$
where $W$ is the normalised adjacency matrix, generated using the stochastic block model with two blocks.

The jump rate is chosen to be $\psi(x)=\ln \left(1+e^x\right)+0.01$. This choice ensures that $\psi$ is Lipschitz continuous and bijective, hence ensuring identifiability in Assumption \ref{ass:mle}. The offset $0.01$ is there to guarantee the positivity, however, it comes at the price of the log-likelihood's global concavity. The goal is to estimate, from data, the vector 
$$\theta^*_v =(\mu_v,(\alpha^{(v)}_k)_{k\in \N^*}, (\beta^{(v)}_k)_{k\in \N^*}),$$
for all seasons $v=1,\cdots,p$. The momentum kernel $\alpha$ and the network kernel $\beta$ will be estimated parametrically, assuming that they are exponential polynomials:
$$\alpha_k^{(v)}=\sum_{m=1}^{q} a^{(v)}_m e^{-\frac{(2m+1)}{\tau}k},$$
and 
$$\beta_k^{(v)}=\sum_{m=1}^{q} b^{(v)}_m e^{-\frac{(2m+1)}{\tau}k}.$$
Of course, if the data comes from kernels that are not exponential polynomials, then this will introduce a misspecification error. We examine it here, both in the case of light-tailed (at least exponentially fast decay) and heavy-tailed (slower than exponential decay) kernels. 

For all the following simulations, the Markov likelihood is maximised using the method \texttt{BFGS} in \texttt{scipy.optimize.minimize}. For the initial values, we take $a^{(v)}_m=b^{(v)}_m=0$ for all $v=1,\cdots,p$ and $m=1,\cdots,q$ and $\mu^{(v)}=\psi^{-1}\left(\frac{1}{Tpd}\sum_{k=1}^{Tp}\sum_{j=1}^d Y_k^{(j)}\right)$. The period is set to $p=7$ and for the baseline pre-intensity we take 
$\mu_v= \mathds 1_{v\leq 3}.$
\subsubsection{Estimation for the well specified model}
The kernels are chosen as
$$\alpha^{(v)}_k=\left(e^{-\frac{3}{4}k}+0.5e^{-\frac{5}{4}k}-1.5e^{-\frac{7}{4}k}-2e^{-\frac{9}{4}k}\right)\frac{1+\cos(2\pi v/7)}{2},$$
and 
$$\beta^{(v)}_k=\left(1.5e^{-\frac{3}{4}k}+1.5e^{-\frac{5}{4}k}-4e^{-\frac{7}{4}k}-5e^{-\frac{9}{4}k}\right)\sin(2\pi v/7),$$
giving $(a^{(v)}_1,a^{(v)}_2,a^{(v)}_3,a^{(v)}_4)=(1,\frac{1}{2},-\frac{3}{2},-2)\cdot\frac{1}{2}\left(1+\cos(2\pi v/7)\right)$ and $(b^{(v)}_1,b^{(v)}_2,b^{(v)}_3,b^{(v)}_4) = (\frac{3}{2},\frac{3}{2},-4,-5)\cdot\sin(2\pi v/7)$.
The correct order $q=4$ and characteristic time $\tau=4$ are assumed known. The aforementioned maximum likelihood estimator is applied for $T=200$ periods, over $N_{MC}=40$ simulations, each time yielding an estimate $(\hat \mu_v,\hat a_1^{(v)},\cdots, \hat a_4^{(v)},\hat b_1^{(v)},\cdots, \hat b_4^{(v)})$. The results for $\mu$ are reported in Figure \ref{fig:boxplot_mu}.
\begin{figure}[h!]
    \centering
    \includegraphics[width=0.7\linewidth]{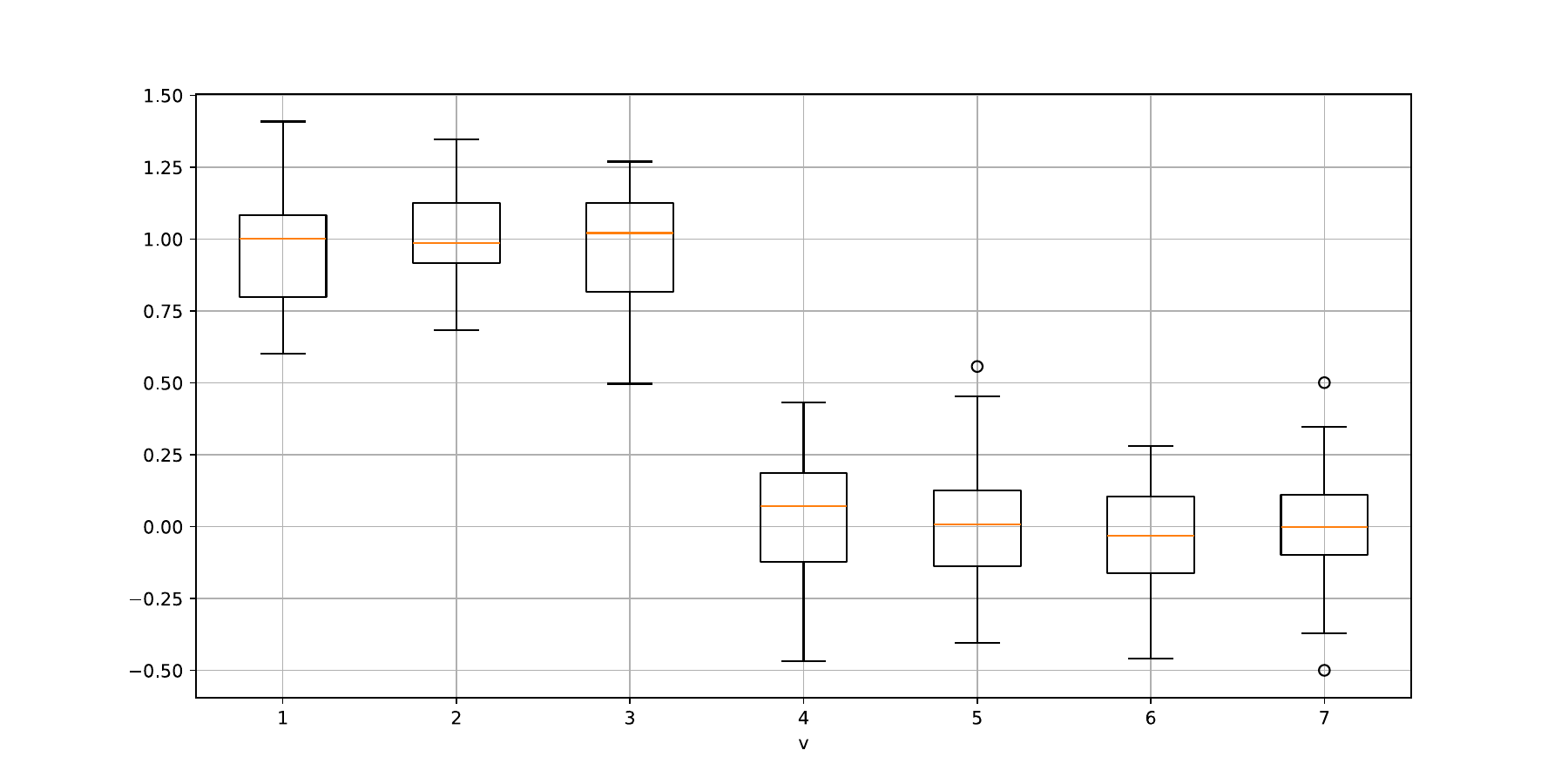}
    \caption{A boxplot of the baseline pre-intensities. The average values are $(0.97,  1.00,  0.97,  0.03,  0.00,
       -0.06 , 0.00)$ and the standard deviations are $(0.19, 0.14, 0.21, 0.22, 0.21, 0.19, 0.19)$.}
    \label{fig:boxplot_mu}
\end{figure}
For the kernels, given the $N_{MC}=40$ simulations, we plot in Figure \ref{fig:kernels_a_b_ws} the reconstructed $\hat \alpha$ and $\hat \beta$ from the estimated  coefficients $\hat a$ and $\hat b$. We also plot the reconstructed kernels using the average coefficients $N_{MC}^{-1} \sum_{n=1}^{N_{MC}} (\hat a_m^{(v)})_n$ and  $N_{MC}^{-1} \sum_{n=1}^{N_{MC}} (\hat b_m^{(v)})_n$.

In the well-specified case, we notice that the average reconstructed kernel fits the ground truth trajectory very well. Furthermore, the trajectory that is reconstructed at every simulation captures the overall appearance of $\alpha$ and $\beta$: almost every green curve has the same overall effect (excitation/inhibition) at a similar order of magnitude as the corresponding blue trajectory. The non-instantaneous excitation/inhibition seems to be well captured as well.

\begin{figure}
    \centering
    \includegraphics[width=1\linewidth]{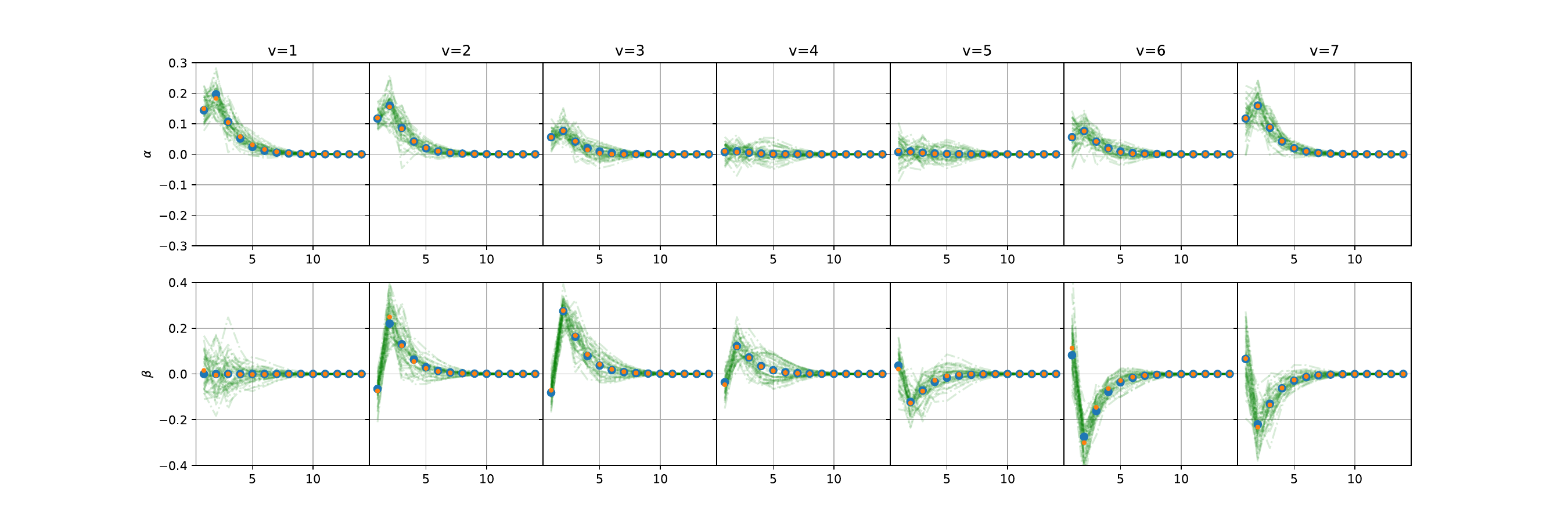}
    \caption{The MLE results in the well specified case, with the known characteristic time $\tau=4$ and order $=4$. In blue: the ground truth kernels $\alpha$ and $\beta$. In green: the $N_{MC}$ reconstructed trajectories for each MLE. In orange: The average reconstructed kernels which are obtained using the coefficients $N_{MC}^{-1} \sum_{n=1}^{N_{MC}} (\hat a_m^{(v)})_n$ and  $N_{MC}^{-1} \sum_{n=1}^{N_{MC}} (\hat b_m^{(v)})_n$.}
    \label{fig:kernels_a_b_ws}
\end{figure}

We point out that despite the closeness between the estimated kernels $(\hat \alpha, \hat \beta)$ and true kernels $(\alpha,\beta)$, the estimated coefficients $(\hat a,\hat b)$ are very different from the ground truth $(a,b)$. For instance, for the $7$-th season we have the results reported in Table \ref{tab:7thseasonresult}.
\begin{table}[htb]
\centering
\begin{tabular}{lcccc}
\toprule
%
 Ground truth $b^7$ & -1.17 & -1.17 & 3.13 & 3.9 \\ [0.5ex] 
 \hline
 \text{Average } $\hat b ^7$ &  -0.81 & -4.6 & 12 & -2.85 \\ 
 \hline
 \text{Standard deviation } $\hat b ^7$&  3.1 & 35.7 & 114.5 & 105.4 \\[1ex] 
\bottomrule
\end{tabular}
\caption{Estimation results for the 7th season.}
\label{tab:7thseasonresult}
\end{table}
This means that even though the MLE is theoretically identifiable in the coefficients $a$ and $b$, it is weakly identifiable in practice. This is because two very different sets of coefficients $b$ and $b'$ can still produce two close kernels $\beta$ and $\beta'$. This weakness is not an issue as the MLE is strongly identifiable in $\alpha$ and $\beta$, which are the quantities that matter. 
\subsubsection{Estimation for the misspecified light-tailed model}
We now examine the effect of estimating a Type I network Poisson autoregression with kernels that are not exponential polynomials by maximising the Markov likelihood \eqref{eq:likelihood_markov} for a fixed characteristic time $\tau>0$ and order $q\in \N^*$. The considered kernels in this section are light-tailed, that is they vanish at least exponentially fast. For example, the momentum kernel is taken 

$$\alpha^{(v)}_k=(k-1)(k-2)(k-3)e^{-k}\frac{1+\cos(2\pi v/7)}{2},$$
and the network kernel 
$$\beta^{(v)}_k=\cos(5(k-1))\frac{e^{-0.1(k-1)^2}}{6}\sin(2\pi v/7).$$
Clearly, we no longer have ground truth coefficients $(a^{(v)}_q)$ and $(b^{(v)}_q)$, nor a ground truth characteristic time $\tau$ nor an order $q$. 

Assuming that we have the prior knowledge that the network ``forgets'' its state after one period that is $T_c=p=7$, we choose the characteristic time to be $\tau = 7\frac{3}{5}\simeq 4$, in accordance with Remark \ref{remark:tau}. The order of the Markov approximation is chosen $q=4$, that is, we maximise the likelihood as if $\alpha$ and $\beta$ were exponential polynomials of order $4$.

As with the previous subsection, we sample $N_{MC}=40$ simulations of Type I periodic network Poisson autoregression over $T=200$ periods. The results for the baseline pre-intensities $\mu^{(v)}$ are reported as boxplots in Figure \ref{fig:boxplot_mu_lt}.

\begin{figure}[h!]
    \centering
    \includegraphics[width=0.7\linewidth]{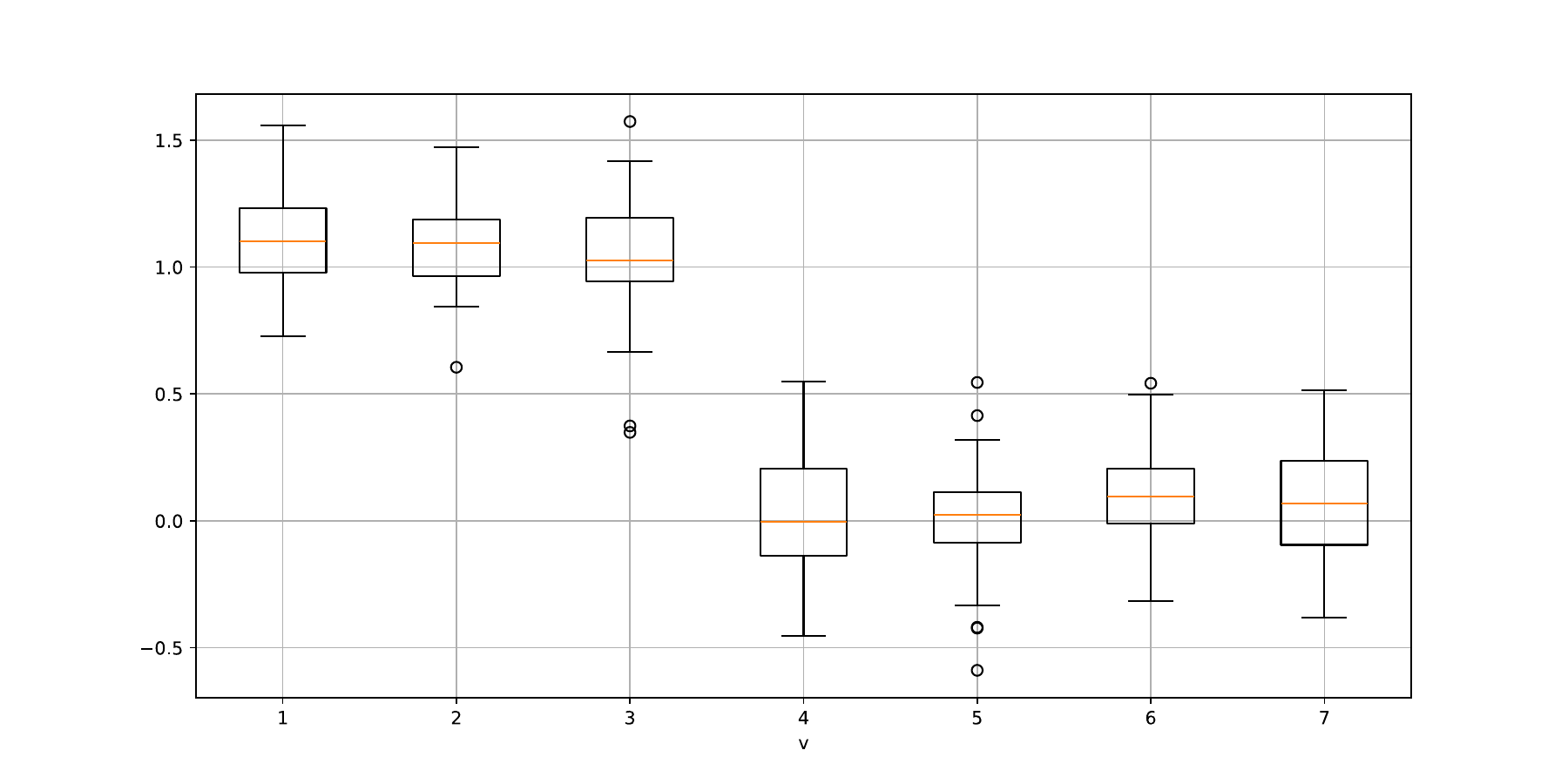}
    \caption{The estimated baseline pre-intensities. The average values are $(1.11, 1.10, 1.02, 0.04, 0.01, 0.10, 0.08)$ and the standard deviations are $(0.18, 0.18, 0.24, 0.22, 0.22,
       0.20, 0.22)$.}
    \label{fig:boxplot_mu_lt}
\end{figure}
Unlike the pre-intensities that are estimated for the well specified case reported in Figure \ref{fig:boxplot_mu}, it seems that for ill-specified case the $\hat \mu ^{(v)}$ are slightly overestimated. We believe that this overestimation is there to compensate an underestimation in the self-excitation caused by the momentum kernel, as it is shown in Figure \ref{fig:kernels_a_b_lt}.

\begin{figure}[h!]
    \centering
    \includegraphics[width=1\linewidth]{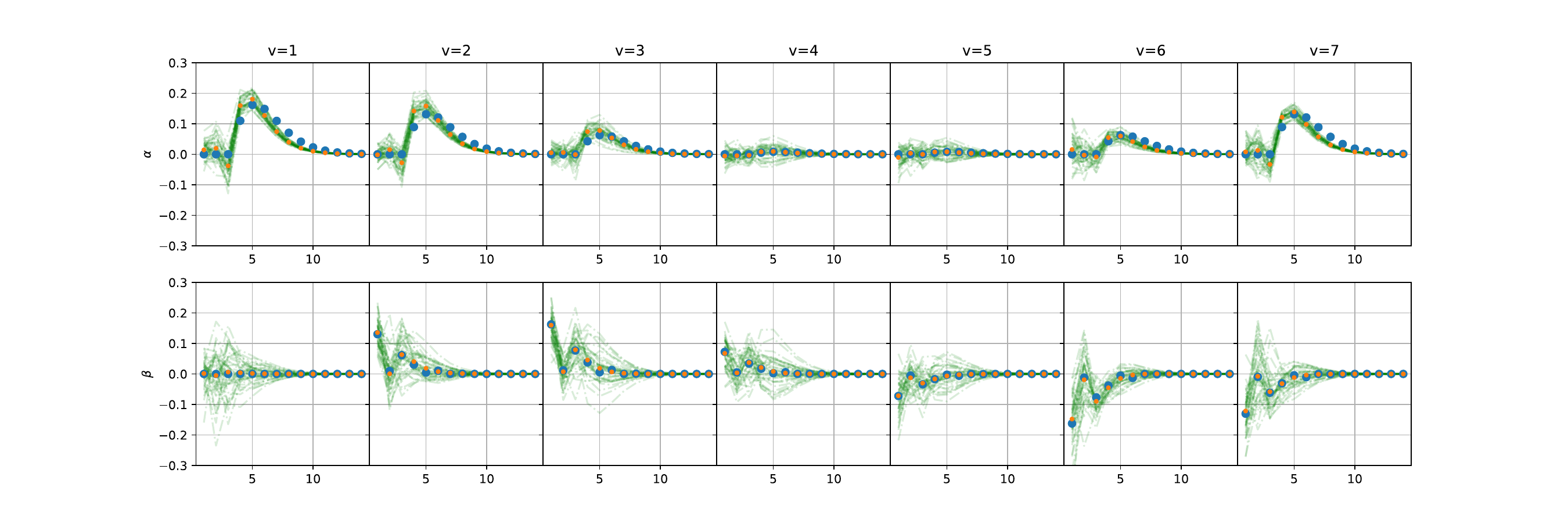}
    \caption{The characteristic time is $\tau=4$ and the order is $q=4$. In blue: the ground truth kernels $\alpha$ and $\beta$. In green: the $N_{MC}$ reconstructed trajectories for each MLE. In orange: The average reconstructed kernels which are obtained using the coefficients $N_{MC}^{-1} \sum_{n=1}^{N_{MC}} (\hat a_m^{(v)})_n$ and  $N_{MC}^{-1} \sum_{n=1}^{N_{MC}} (\hat b_m^{(v)})_n$.}
    \label{fig:kernels_a_b_lt}
\end{figure}
We notice that for the momentum kernel $\alpha$, the average estimated kernel decays slightly faster than $\alpha$. This underestimation is believed to stem from our choice of the characteristic time $\tau=4$ (corresponding to $T_c\simeq 7$): The true network seems to regress on its past values until $T_c\simeq 15$. However, despite this discrepancy in the characteristic time, the kernels $\alpha^{(v)}$ seem to be well captured by the approximation. 

As for the network kernels $\beta$, we notice that the variance is higher than for $\alpha$ or the kernels in the well-specified case. We believe that this is the case because $q=4$ decaying exponentials cannot easily capture the high non-monotonicity induced by the term $\cos(5(k-1))$. We now examine the effect of changing the characteristic time to $\tau=8$  which allows for a longer memory and of increasing the order to $q=6$. For the baseline pre-intensities, as shown in Figure \ref{fig:boxplot_mu_lt_bis}, we notice that they are no longer overestimated.

\begin{figure}[h!]
    \centering
    \includegraphics[width=0.7\linewidth]{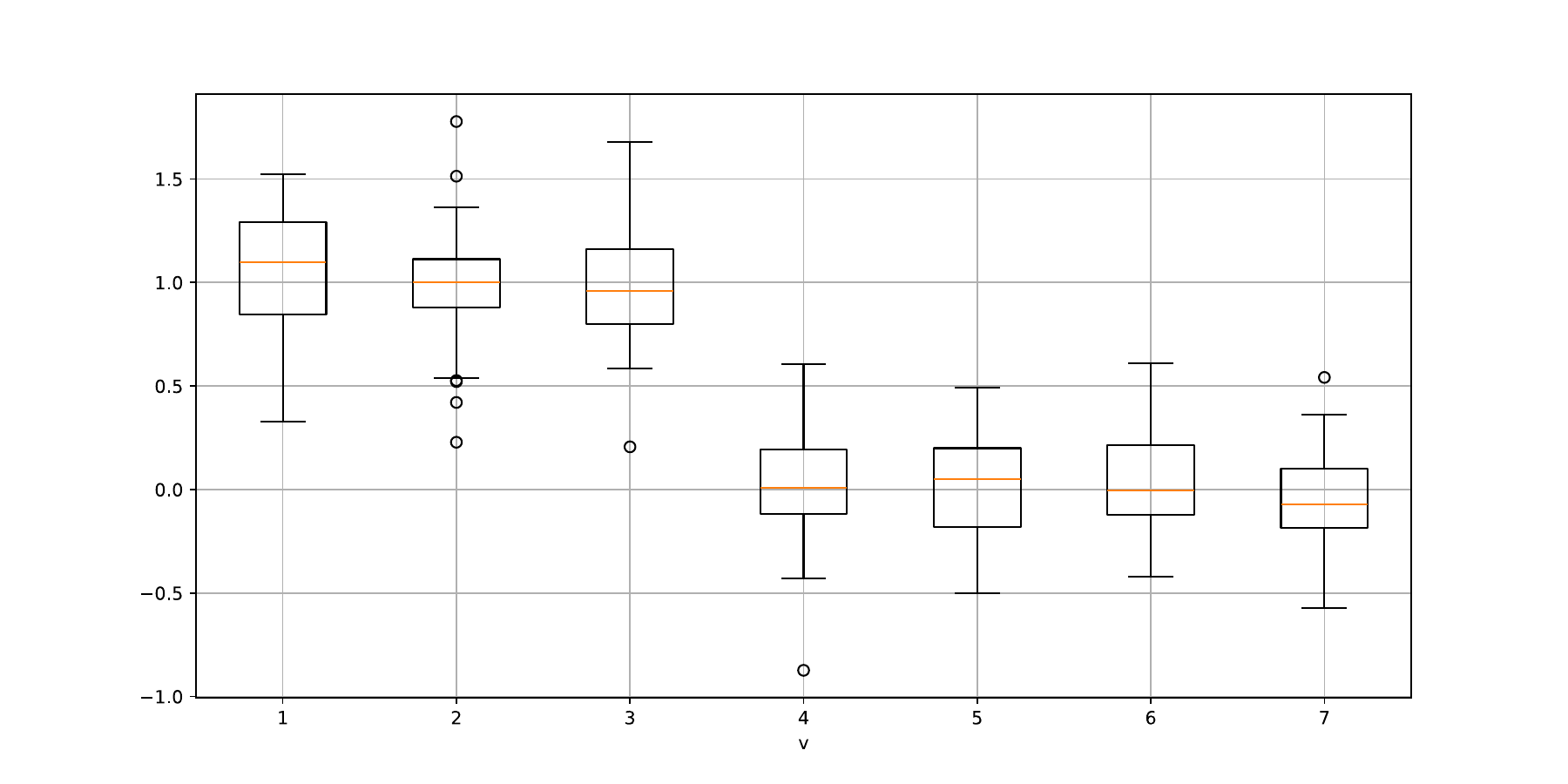}
    \caption{The MLE no longer overestimates the baseline intensities. The average values are $( 1.06,  0.98,  0.97,  0.01,  0.01,
        0.03, -0.05)$ and the standard deviations are $(0.29, 0.28, 0.26, 0.26, 0.24,
       0.24, 0.23)$.}
    \label{fig:boxplot_mu_lt_bis}
\end{figure}
The momentum kernels are also better captured with the more adapted choice $\tau=8$, as illustrated on Figure \ref{fig:kernels_a_b_lt_bis}. However, we notice that the estimation of the network kernels $\beta$ has a higher variance. This could be due to fact that $\tau=8$ yields an exponential that vanishes too slowly compared to $\beta$'s extinction time ($T_c\simeq 15$ \textit{vs.} $T_c\simeq 5$). The average kernel (lower panel, in orange) still captures the shape of the ground truth kernel quite well.

\begin{figure}[h!]
    \centering
    \includegraphics[width=1\linewidth]{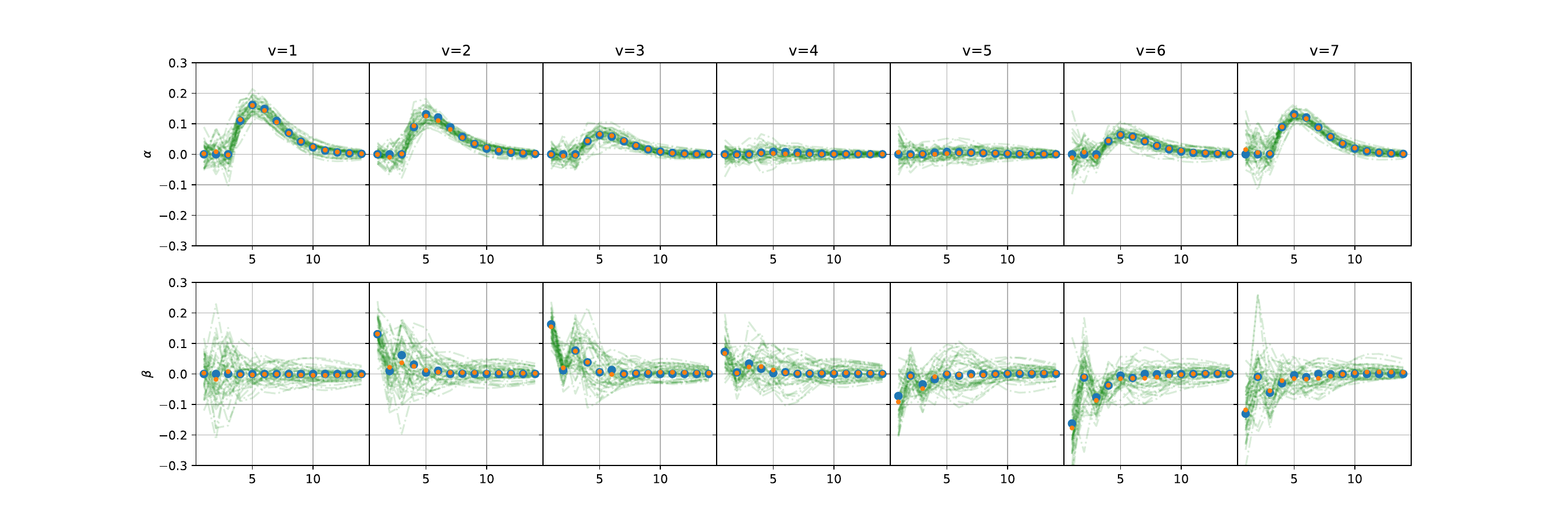}
    \caption{The characteristic time is now $\tau=8$ and the order has been increased to $q=6$. In blue: the ground truth kernels $\alpha$ and $\beta$. In green: the $N_{MC}$ reconstructed trajectories for each MLE. In orange: The average reconstructed kernels which are obtained using the coefficients $N_{MC}^{-1} \sum_{n=1}^{N_{MC}} (\hat a_m^{(v)})_n$ and  $N_{MC}^{-1} \sum_{n=1}^{N_{MC}} (\hat b_m^{(v)})_n$.}
    \label{fig:kernels_a_b_lt_bis}
\end{figure}
\subsubsection{Estimation for the misspecified heavy-tailed model}
Heavy-tailed kernels are any element of $\ell_1(\N^*)$ whose decay is slower than that of an exponential, \textit{e.g.} power law tails. Poisson autoregressions with heavy tails do not reach their periodically stationary regime as fast as autoregressions with light tails, as illustrated in Figure \ref{fig:approx}. This should yield \textit{a priori} a larger misspecification error with the Markov MLE. We now examine the performance numerically, with a momentum kernel taken as
$$\alpha^{(v)}_k=\frac{k^{1.6}}{6(1+0.2 k ^{3.6})}\frac{1+\cos(2\pi v/7)}{2},$$
and 
$$\beta^{(v)}_k=\frac{e^{-1.5 \sqrt{k-1}}k(\arctan(k-1.5)-0.5)}{5}\sin(2\pi v /7).$$

The Markov log-likelihood is maximised with a characteristic time $\tau=8$ and an order $q=6$. The estimation results for the baseline pre-intensities are reported in Figure \ref{fig:boxplot_mu_ht_bis}.

\begin{figure}[h!]
    \centering
    \includegraphics[width=0.7\linewidth]{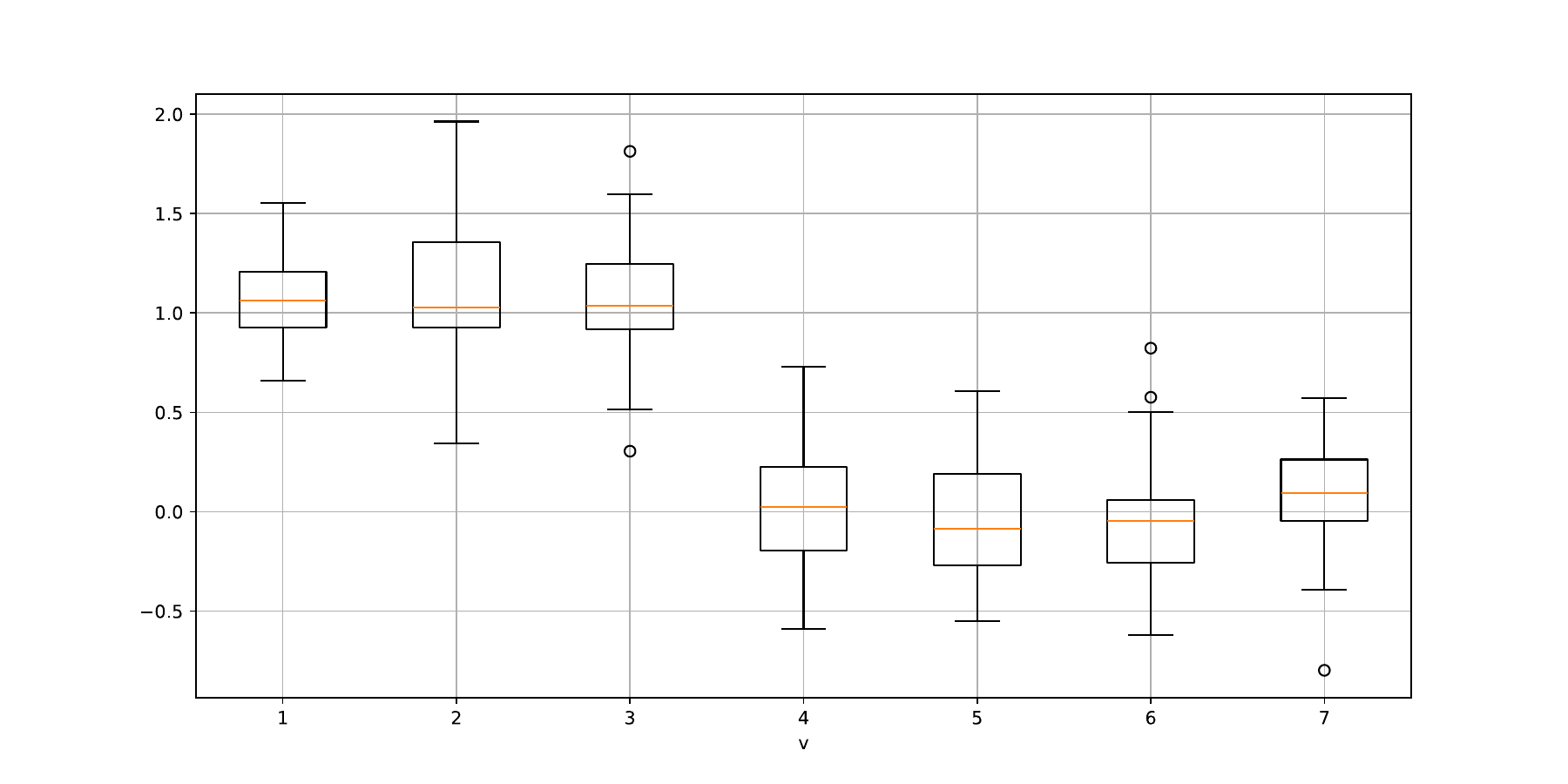}
    \caption{The estimated baseline pre-intensities. The average values are $(1.08,  1.11,  1.07,  0.02, -0.04, -0.06,  0.09)$ and the standard deviations are $(0.20, 0.30, 0.31, 0.32 , 0.29,
       0.30, 0.27).$}
    \label{fig:boxplot_mu_ht_bis}
\end{figure}

As for the momentum and network kernels, the results are presented in Figure \ref{fig:kernels_a_b_ht}.
\begin{figure}[h!]
    \centering
     \includegraphics[width=1\linewidth]{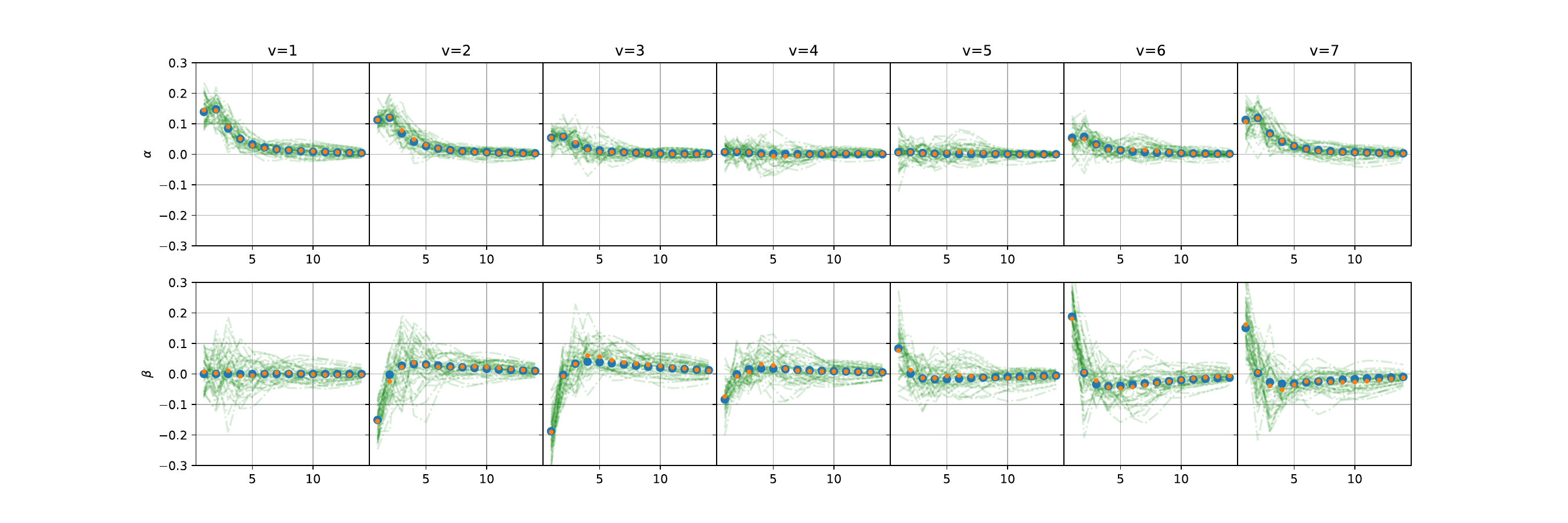}
    \caption{The characteristic time is $\tau=8$ and the order is $q=6$. In blue: the ground truth kernels $\alpha$ and $\beta$. In green: the $N_{MC}$ reconstructed trajectories for each MLE. In orange: The average reconstructed kernels which are obtained using the coefficients $N_{MC}^{-1} \sum_{n=1}^{N_{MC}} (\hat a_m^{(v)})_n$ and  $N_{MC}^{-1} \sum_{n=1}^{N_{MC}} (\hat b_m^{(v)})_n$.}
    \label{fig:kernels_a_b_ht}
\end{figure}
The Markov MLE seems to perform well on the heavy-tailed kernels as well, managing to capture both the baseline pre-intensities and the kernels. We notice however that the variance is higher than that of the well-specified or the misspecified light-tailed estimations. This is to be expected, as the heavy-tailed processes reach their periodically stationary distribution considerably slower than the aforementioned two models. Indeed, as it is shown in Figure  , increasing the time horizon from $T=200$ to $T=400$ leads to green curves that are more concentrated around the ground truth kernels
\begin{figure}[h!]
    \centering
    \includegraphics[width=1\linewidth]{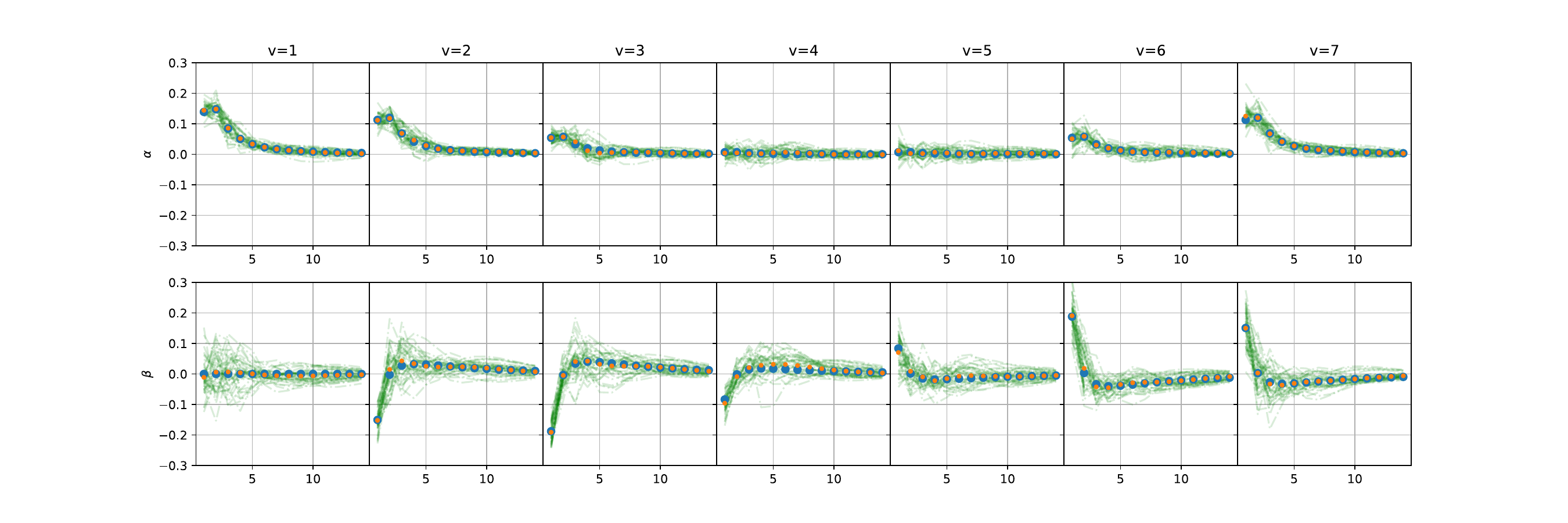}
    \caption{Increasing the time horizon from $T=200$ to $T=400$ leads to a visible reduction in the variance. In blue: the ground truth kernels $\alpha$ and $\beta$. In green: the $N_{MC}$ reconstructed trajectories for each MLE. In orange: The average reconstructed kernels which are obtained using the coefficients $N_{MC}^{-1} \sum_{n=1}^{N_{MC}} (\hat a_m^{(v)})_n$ and  $N_{MC}^{-1} \sum_{n=1}^{N_{MC}} (\hat b_m^{(v)})_n$.}
    \label{fig:kernels_a_b_ht_T400}
\end{figure}
\section{Rotavirus data analysis}
\label{sec:ral_data}
We now consider weekly cases of Rotavirus among children in Berlin between 2001 and 2015, a time interval of $T=732$ weeks in $d=12$ districts. The original data set covers the entire country of Germany ($412$ districts) and was obtained from \url{https://github.com/ostojanovic/BSTIM}. 
We will first fit both our seasonal Markov model of order $1$ and a PNAR($1$) model using likelihood maximisation in the first $11$ years (573 weeks, roughly $80\%$ of the data) and compare both values of the Bayes Information Criterion (BIC). Then we use both models to forecast the weekly number of cases of Rotavirus for each of the $12$ districts and compare how they perform compared to each other.
\subsection{Model comparison}
Throughout this section, we only consider linear Poisson autoregressions, that is, $\psi(x)=x$. Linearity ensures that quantities such as the expected value are exactly computable, but comes at the price of not allowing self/mutual inhibition. This is not a problem, as contagious viruses tend to trigger more cases rather than surpress them. 

The neigbourhood structure is straighforward; two nodes (\textit{i.e.} districts) have an edge between them if the corresponding districts share a border. This gives us the column-normalised weighted adjacency matrix $W$.

\begin{figure*}[h!]
    \centering
    \begin{subfigure}[t]{0.5\textwidth}
        \centering
        \includegraphics[height=56mm]{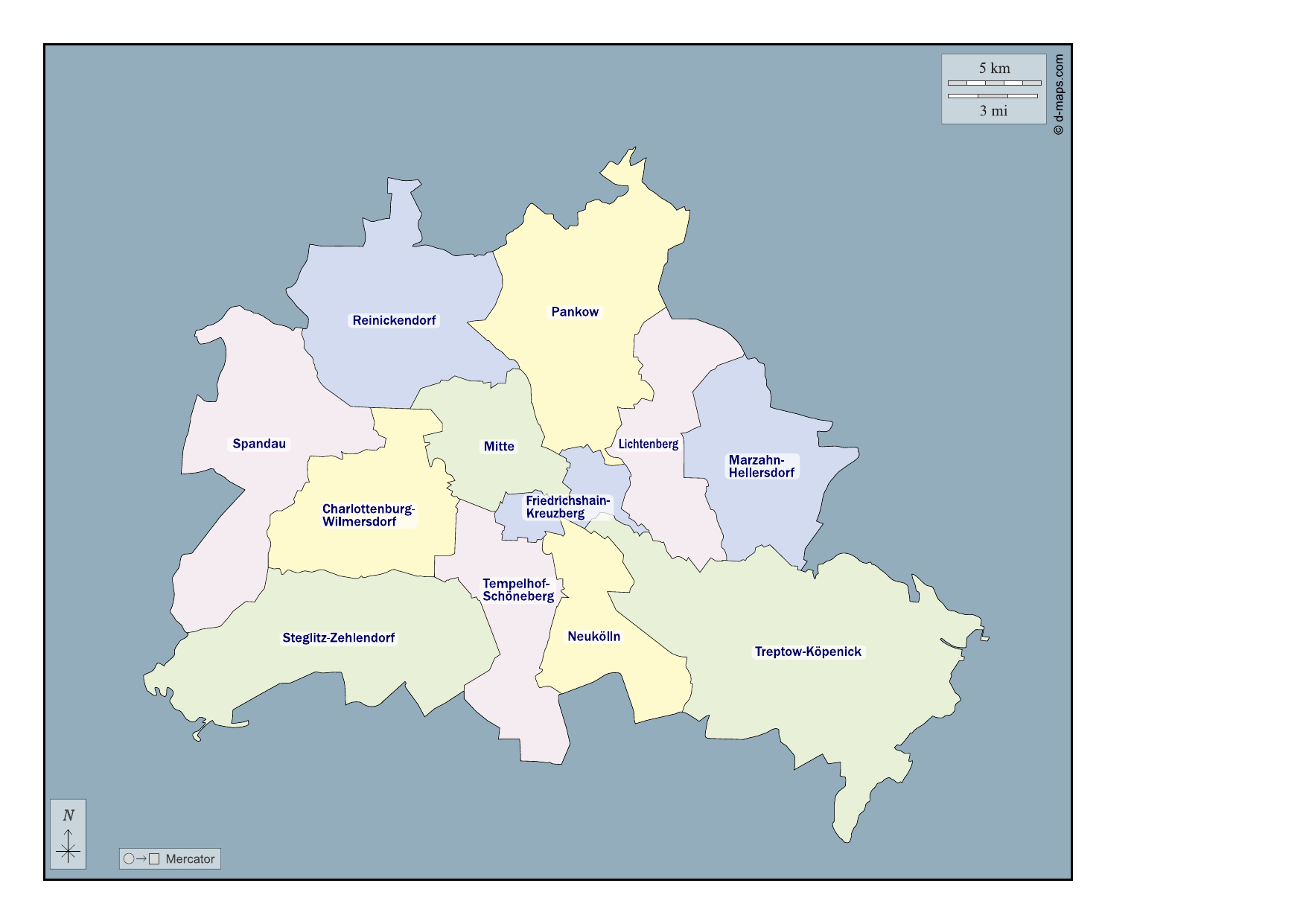}
    \end{subfigure}%
    ~ 
    \begin{subfigure}[t]{0.5\textwidth}
        \centering
        \includegraphics[height=55mm]{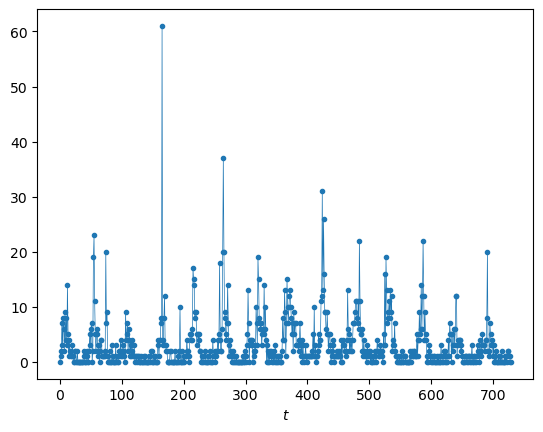}
    \end{subfigure}
    \caption{Left: A map of Berlin taken from \url{https://d-maps.com}. Right: Weekly number of Rotavirus cases in the Mitte district.}
\end{figure*}
Since diseases tend to be seasonal, the model we propose is 
\begin{equation}
\label{eq:model_1}
    \begin{cases}
        Y_t&=N_t\left(\mu_t+ (a^{(t)}I_{12}+b^{(t)}W)\xi_t\right),\\
        \xi_t&=e^{-\frac{3}{\tau}}(\xi_{t-1}+Y_{t-1}),
    \end{cases}
    \end{equation}
where $\mu_t$ is a vector of dimension $12$ corresponding to each district's baseline intensity whereas $a^{(t)}$ and $b^{(t)}$ are scalar sequences encoding the momentum and network effects. All of the sequences are $p-$periodic ($p$ will be determined later) and for the sake of parsimony will be parametrised 
\begin{equation*}
\begin{cases}
    \mu_t&=\mu + \mu' \sin \left(\frac{2\pi t}{p}\right) + \mu'' \cos \left(\frac{2\pi t}{p}\right),\\
    a^{(t)}&=a + a' \sin \left(\frac{2\pi t}{p}\right) + a'' \cos \left(\frac{2\pi t}{p}\right),\\
    b^{(t)}&=b + b' \sin \left(\frac{2\pi t}{p}\right) + b'' \cos \left(\frac{2\pi t}{p}\right).\\
    \end{cases}
\end{equation*}
Regarding the choice of the period $p$, a simple choice would be fixing $p=52 \in \N^*$. However, a year will always have slightly more than $52$ weeks, we therefore choose $p=52.18\simeq 365.25/7$. While the fact that $p$ is no longer integer means that the model is no longer within the framework of periodic Poisson autoregressions studied in the previous sections, we still study it empirically. We fit the model by maximising the likelihood in the variables $(\mu,\mu',\mu'',a,a',a'',b,b',b'')$ nevertheless. When it comes to the decay parameter $\tau$, we assume that the bulk of infection $(95\%)$ happens within 3 weeks. Hence, we perform our analysis with $\tau=3 $ weeks.

As a first comparison with the PNAR($1$) model introduced in \cite{ArFok}
\begin{equation}
    \label{eq:model_2}
    \begin{cases}
    Y_t &=N_t \left(\lambda_t\right),\\
    \lambda_t &=\mu +(aI_{12}+bW)Y_{t-1},
    \end{cases}
\end{equation}
we compute the BIC values after maximising the likelihood using the \texttt{SLSQP} method of optimisation under Python. The values computed over the first $t=573$ weeks are reported in Table \ref{tab:comparison}.
\begin{table}[htb]
\centering
\begin{tabular}{lc}
\toprule
\text{Our model} \eqref{eq:model_1} & -16861.78\\ [0.5ex] 
 \hline
 \text{PNAR($1$) }\eqref{eq:model_2}  & -16159.77  \\ 
[1ex] 
 \bottomrule
\end{tabular}
\caption{Comparison of the BIC values for our model \eqref{eq:model_1} and the \text{PNAR($1$) } model, see \eqref{eq:model_2}. }
\label{tab:comparison}
\end{table}
\subsection{Forecasting}
Throughout this section, we compare the forecasting performance of our model to the PNAR process. For both models, the predictor of the number of cases at time $t+i$ knows the history until time $t$. We point out that for both models, including in the non-linear case, the one-step prediction can be explicitly computed as 
\begin{align*}\E[Y_{t+1}|\mathcal F^Y_t]=\lambda_{t+1}
=\psi\left(\mu_{t+1}+\sum_{k=1}^{t}\phi^{(t)}_{t+1-k}Y_k\right),
\end{align*}
where $\left(\phi^{(t)}_k\right)_{k \in \N^*}$ does not depend on $t$ and only takes $q$ values for the PNAR($q$) models. 

For the linear case, the higher order conditional expected values can be computed recursively. Indeed, for a given $j>1$, we have, using the tower property of the conditional expectation 
\begin{align*}
    \E[Y_{t+j}|\mathcal F^Y_t]&=\E \left [\E [Y_{t+j}|\mathcal F^Y_{t+j-1}]|\mathcal F^Y_t\right ]\\
    &=\E\left[ \mu_{t+j}+\sum_{k=1}^{t+j}\phi^{(t+j)}_{t+j-k}Y_k \bigg |\mathcal F^Y_t\right]\\
    &=\mu_{t+j}+\sum_{k=1}^{t}\phi^{(t+j)}_{t+j-k}Y_k + \sum_{k=t+1}^{t+j}\phi^{(t+j)}_{t+j-k}\E [Y_k|\mathcal F^Y_t].
\end{align*}
The conditional value will play the role of our predictor in this section.\\
After fitting both our model \eqref{eq:model_1} and PNAR($1$) \eqref{eq:model_2} by maximising the likelihood over the first $t=573$ weeks ($\simeq 80\%$ of the data), we forecast the next $h>1$ steps ahead. Once that time is reached, we repeat the same procedure, until we reach the last time $T=732$. More concretely, the predictor writes
$$\hat Y_{t+j}=\E \left [Y_{t+j}|\mathcal F^Y_{t+\lfloor j/h \rfloor h} \right].$$
The performance is measured by evaluating the Root Mean Square Error (RMSE) for each district $i$
$$\text{RMSE}^{(i)}=\left(\frac{1}{T-t}\sum_{j=1}^{T-t}\left|\hat Y^{(i)}_{t+j}-Y^{(i)}_{t+j}\right|^2\right)^{1/2}.$$
The performance of the two models is thus compared by comparing their respective RMSE for each district.

To determine whether the difference is statistically significant, we deploy the Diebold-Mariano (DM) test \cite{DM}, which establishes whether the null hypothesis of equal forecasting performance can be confidently rejected.
The DM test is run using the Python code found here \url{https://github.com/johntwk/Diebold-Mariano-Test}. The obtained $p-$values are then adjusted using the Benjamini-Hochberg (BH) procedure. 
\subsubsection{Short term forecasting}
We seek to predict the weekly number of Rotavirus cases in each district of Berlin over a horizon of $h=4$ weeks. Over this relatively short period (roughly one month), the seasonality is not expected to be very pronounced. For both our model and the PNAR ($1$) model, the RMSE values for each district are reported in Table \ref{tab:shorttermforc}. We also report the value of the DM test and the BH adjusted $p-$values.
\begin{table}[htb]
\centering
\begin{tabular}{lcccc}
\toprule
\text{District} & \text{RMSE} \eqref{eq:model_1} & \text{RMSE} \eqref{eq:model_2} & \text {DM value} & \text{Adjusted $p-$value}\\ [0.5ex] 
 \hline
 \text{Mitte } & 2.71 &2.62 & 0.66 & 0.552  \\ 
 \text{Friedrichshain-Kreuzberg} & 1.70 & 1.91 & -0.91 & 0.482 \\
 \text{Pankow} & 7.31 & 7.48 & -2.14 & 0.133\\
 \text{Charlottenburg-Wilmersdorf} & 4.09 & 4.32 & -1.16 & 0.372 \\
 \text{Spandau} & 2.72 & 2.89 & -1.60 & 0.250\\
 \text{Steglitz-Zehlendorf} & 4.84 & 5.07 & -1.54 & 0.250 \\
 \text{Tempelhof-Schöneberg} & 2.66 & 2.60 & 0.70 & 0.552 \\
 \text{Neukölln} & 2.90 & 3.08 & -1.16 & 0.372 \\
 \text{Treptow-Köpenick} & 3.24 & 3.70 & -1.95 & 0.158 \\
 \text{Marzahn-Hellersdorf} & 2.34 & 3.44 & -2.93 & \textbf{0.046} \\
 \text{Lichtenberg} & 2.06 & 2.64 & -2.39 & 0.108 \\
 \text{Reinickendorf} & 1.69 & 1.74 & -0.46 & 0.645 \\ 
[1ex] 
\bottomrule
\end{tabular}
\caption{Results from predicting the weekly number of Rotavirus cases in each district of Berlin over a horizon of $h=4$ weeks. The first and second columns present the RMSE values for each district for our model and the PNAR ($1$) model, respectively.  The remaining columns present the values of the DM test and the BH adjusted $p-$values.}
\label{tab:shorttermforc}
\end{table}
The adjusted $p-$values below the confidence level of $5\%$ are given in bold.

We notice that the RMSE values are lower for our model in all districts except for Mitte and Tempelhof-Schöneberg. This advantage is not sufficient to reject the hypothesis of equal forecasting performance. 
\subsubsection{Long term forecasting}
We now forecast the weekly number of cases over the longer horizon of $h=13$ weeks (roughly three months). It is expected for the seasonality to be more marked over such a longer horizon. The numerical values are reported in the Table \ref{tab:longtermforc}.
\begin{table}[htb]
\centering
\begin{tabular}{lcccc}
\toprule
\text{District} & \text{RMSE} \eqref{eq:model_1} & \text{RMSE} \eqref{eq:model_2} & \text {DM value} & \text{Adjusted $p-$value}\\ [0.5ex] 
 \hline
 \text{Mitte} & 3.67 & 2.89 & 2.91 & $\underline{\mathbf{1.23 \cdot 10^{-2}}}$  \\ 
 \text{Friedrichshain-Kreuzberg} & 2.57 & 2.43 & 0.62 & 0.707 \\
 \text{Pankow} & 6.74 & 7.12 & -2.76 & $\mathbf{1.55 \cdot 10^{-2}}$\\
 \text{Charlottenburg-Wilmersdorf} & 4.09 & 4.70 & -1.86 & 0.096 \\
 \text{Spandau} & 2.99 & 3.07 & -0.31 & 0.763\\
 \text{Steglitz-Zehlendorf} & 5.63 & 6.66 & -3.28 & $\mathbf{0.51 \cdot 10^{-2}}$ \\
 \text{Tempelhof-Schöneberg} & 3.24 & 3.60 & -2.19 & $5.03 \cdot 10^{-2}$ \\
 \text{Neukölln} & 2.62 & 3.56 & -3.63 & $\mathbf{0.23 \cdot 10^{-2}}$ \\
 \text{Treptow-Köpenick} & 5.93 & 6.00 & -0.41 & 0.763 \\
 \text{Marzahn-Hellersdorf} & 2.55 & 3.07 & -2.59 & $\mathbf{2.05 \cdot 10^{-2}}$ \\
 \text{Lichtenberg} & 2.52 & 3.27 & -4.64 & $\mathbf{5.8 \cdot 10^{-5}}$ \\
 \text{Reinickendorf} & 2.43 & 2.49 & -0.30 & 0.763 \\ 
[1ex] 
\bottomrule
\end{tabular}
\caption{Results from predicting the weekly number of Rotavirus cases in each district of Berlin over a horizon of $h=13$ weeks. The first and second columns present the RMSE values for each district for our model and the PNAR ($1$) model, respectively.  The remaining columns present the values of the DM test and the BH adjusted $p-$values.}
\label{tab:longtermforc}
\end{table}
The values given in bold are the adjusted $p-$values below the confidence level of $5\%$. The underlined value corresponds to the only district (Mitte) in which the PNAR(1) model predicts the weekly number of cases significantly better than our model.   

We can then conclude that our model \eqref{eq:model_1} performs significantly better than the PNAR(1) process in 5 of the 12 districts and that it performs significantly worse in the Mitte district. For the remaining 6 districts, the null hypothesis of equal prediction performance cannot be rejected. That being said, in 5 of them, our model has a smaller RMSE than the PNAR(1) model. 

As a final illustration, we show the performance of the predictions based on our model and on a PNAR(1) dynamics, in Mitte and Lichtenberg.  

\begin{figure*}[h!]
    \centering
    \begin{subfigure}[t]{0.5\textwidth}
        \centering
        \includegraphics[height=60mm]{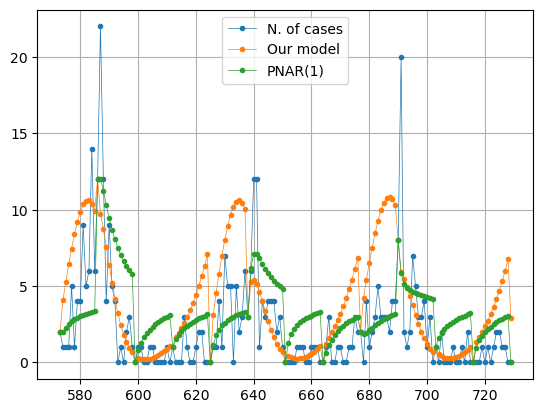}
        \caption{Mitte.}
    \end{subfigure}%
    ~ 
    \begin{subfigure}[t]{0.5\textwidth}
        \centering
        \includegraphics[height=60mm]{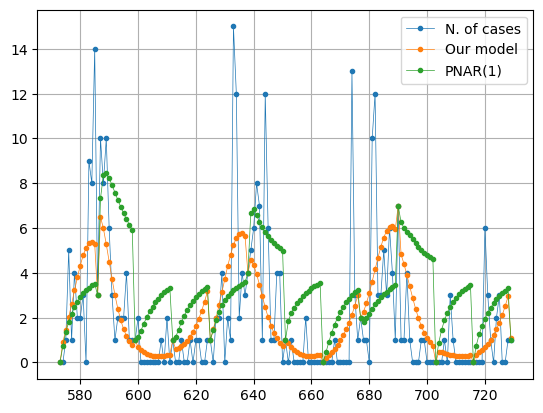}
        \caption{Lichtenberg.}
    \end{subfigure}
    \caption{Weekly predicted number of cases on a horizon of 13 weeks. For Mitte, seasonality seems to hinder accurate prediction because of some lag in the real cases compared to past seasons. For Lichtenberg, seasonality considerably enhances prediction's accuracy .}
\end{figure*}

\section{Conclusion}
In this article, we studied two types of periodic network autoregressions, for which we gave sufficient stability conditions as well as bounds on the speed of convergence to their periodically stationary regimes. Furthermore, we proved that the infinite memory process admits a Markov approximation that reduces the simulation and inference cost from quadratic to linear. The empirical study shows that such an approximation is robust when it comes to inference, even if the original dynamics have a heavy tail. 
We believe that there is still room for improvement for the different results presented in this paper, especially in two areas:
\begin{enumerate}
    \item By relaxing the stability Assumption \ref{ass:stability} in case the kernel has infinite memory to a ``contraction over a period'' condition rather than a ``contraction along every season'', or by proving a condition on the positive part of the kernel rather than its absolute value.
    \item By providing guarantees on the convergence of the MLE for the misspecified case.
\end{enumerate}

These results can naturally be extended to the ``risk'' network autoregression, which is more adapted to insurance problems. Indeed, if instead of simply counting events we can aggregate the ``losses'' from them 
$$R_t^{(i)}=\sum_{n=1}^{Y_t^{(i)}}Q^{(i,n)}_t$$
where $Q^{i,n}_t$ are an \textit{iid} family of random variables of common distribution $\kappa$ that play the role of an insurance claim. The stochastic intensity can also depend on the ``severity'' of the claims:
$$\lambda_t=\psi \left( \mu_t + \sum_{k=1}^{t-1}(\alpha^{(t)}_{t-k}I_d+\beta^{(t)}_{t-k}W)R_k\right).$$
The stability condition becomes $L\E[Q]\rho\left(\sum_{k\geq 1}\max_{v=1,\cdots,p}(|\alpha_k^{(v)} |I_d + |\beta^{(v)}_k| W)\right)<1 $ and the periodic stationarity results as well as those of the Markov approximation generalise naturally. The model can also be extended to include exogenous noise whether of discrete or continuous nature, simply by adding an extra term to the intensity's expression, for example
$$\lambda_t=\psi \left( \mu_t + \sum_{k=1}^{t-1}(\alpha^{(t)}_{t-k}I_d+\beta^{(t)}_{t-k}W)(R_k+\sigma X_k)\right),$$
where $(X_k)_{k\in \N^*}$ is an \textit{iid} family of standard Gaussian or Poisson variables. 

Finally, we notice that the field of continuous time periodic stochastic processes remains relatively explored (\textit{cf.} \cite{Veraart2024} for periodic trawl processes and \cite{HL} for periodically forced Markov processes). It might then be worthwhile to use the periodic Poisson autoregression presented in this paper as a way of constructing a periodic Hawkes process $(H_t)_{t\in \R}$ of intensity
$$\lambda_t=\psi\left(\mu_t+\int_{-\infty}^{t-}\phi(t,t-s)\mathrm d H_s\right)$$
for Type I periodicity and 
$$\lambda_t=\psi\left(\mu_t+\int_{-\infty}^{t-}\phi(s,t-s)\mathrm d H_s\right)$$
for Type II periodicity, $\phi$ here being a function that is $p$ periodic in its first argument.  

\subsection*{Acknowlegement}
The authors wish to thank Abdelhakim Aknouche, Olivier Wintenberger and Wei Wu for their useful suggestions. This work was supported by the EPSRC NeST Programme grant EP/X002195/1.

\subsection*{Authors contribution statement using the CRediT classification:} 
Mahmoud Khabou: Conceptualization, Data Curation, Formal Analysis,  Investigation, Methodology, Software, Writing – Original Draft Preparation;
Ed Cohen: Conceptualization, Funding Acquisition, Methodology, Supervision, Writing – Review \& Editing;
Almut Veraart: Conceptualization, Funding Acquisition, Methodology, Supervision, Writing – Review \& Editing.
All authors read and approved the final manuscript.

\appendix
\section{Proofs}
\subsection{Proof of Proposition \ref{prop:truncate}}
\label{sec:prop:truncate}
  The proof relies on showing that \eqref{eq:finite_reg} is a contraction on average over a period, which yields the existence and uniqueness of a stationary solution. Following the steps of \cite{DT}, we start by introducing the random map from $\R^{mp}$ to itself 
    $$F_t: (x_1,x_2,\cdots,x_{mp}) \mapsto \left(f_t(x_1,x_2,\cdots, x_{mp}, 0,\cdots; \zeta_t),x_1,\cdots,x_{mp-1}\right)$$
    for any $v\in \{1,\cdots,p\}$. Clearly, $F_t$ is periodic in distribution and for any $v=1,\cdots,p$ 
    \begin{align*}
        \E \left[ \left | F_v(x_1,\cdots,x_{mp})-F_v(x'_1,\cdots,x'_{mp}) \right|_{ } \right] \preceq \Gamma_v \left| (x_1,\cdots,x_{mp})- (x'_1,\cdots,x'_{mp}) \right|_{ },
    \end{align*}
    where the matrices $\Gamma_1,\cdots, \Gamma_p$ are given in Assumption \ref{ass:contraction_periodic}.
    For $n \in \Z$, let 
    $$U_{n}=\left(X_{np},X_{(n-1)p+p-1},\cdots,X_{(n-m)p+1}\right)$$
    be the concatenation of $m$ consecutive seasons. This process satisfies $$U_{n+1}=G\left(U_n\right),$$
    where $G=F_p \circ F_{p-1}\circ \cdots \circ F_1$. Given two histories $\boldsymbol u = (x_1,x_2,\cdots,x_{mp})$ and $\boldsymbol{u}'=(x'_1,x'_2,\cdots, x'_{mp})$, we have that 
    \begin{align*}
        \E \left [\left| G(\boldsymbol u)-G(\boldsymbol u ')\right|_{ }\right] &= \E \left [ \E_{\zeta_1,\cdots,\zeta_p} \left [ \left | F_p \left( F_{p-1}\circ \cdots \circ F_1 (\boldsymbol u)\right)-F_p \left( F_{p-1}\circ \cdots \circ F_1 (\boldsymbol u')\right)\right|_{ }\right]\right]\\
        &\preceq \Gamma_p \E \left[ F_{p-1}\circ \cdots \circ F_1 (\boldsymbol u)- F_{p-1}\circ \cdots \circ F_1 (\boldsymbol u') \right],
    \end{align*}
     Repeating the same conditioning and bounding from above yields 
     \begin{align*}
          \E \left [\left| G(\boldsymbol u)-G(\boldsymbol u ')\right|_{ }\right] & \preceq (\Gamma_p\cdots \Gamma_1) |\boldsymbol{u}-\boldsymbol{u}'|_{ } 
     \end{align*}
     where the product of the matrix $\Gamma_p\cdots \Gamma_1$ has a spectral radius strictly smaller than $1$. Using Gelfand's formula there exists $\bar \rho \in (0,1)$ and $q \in \mathbb N$ such that 
     \begin{equation}
     \label{ineq:contraction}
     \E \left \|G^n(\boldsymbol{u})-G^n(\boldsymbol{u}') \right \|_1 \leq \bar\rho^n \|\boldsymbol{u}-\boldsymbol{u}'\|_1
     \end{equation}
     whenever $n\geq q$. Using Theorem 2 in \cite{WS} we conclude regarding the existence and weak dependence of a stationary process $(U_n)_{n\in \Z}$.
     Indeed, using the coupling argument of \cite{DP}, the weak dependence coefficient $\tau_n$ is bounded from above by $C\bar \rho ^n$ for some positive constant $C$. The uniqueness and exponentially fast convergence towards the stationary solution are deduced from Theorem 2.6.1 in \cite{Straumann}.
     \subsection{Proof of Theorem \ref{thm:infinite}}
     \label{sec:thm:infinite}
         First, we fix an \textit {ipd} (independent and periodically distributed) sequence of innovations $(\zeta_t)_{t\in \Z}$, that is, an independent sequence such that $\zeta_{t+p}$ has the same distribution as $\zeta_t$, for all $t\in \Z$. Let $q\geq m$ be two positive integers and let $\tilde X^{(m)}$ and $\tilde X^{(q)}$ be the periodically stationary solutions of
     $$\tilde X^{(m)}_t=f_t(\tilde X^{(m)}_{t-1},\tilde X^{(m)}_{t-2},\dots,\tilde X^{(m)}_{t-mp},0,\cdots; \zeta_t)$$
     and 
     $$\tilde X^{(q)}_t=f_t(\tilde X^{(q)}_{t-1},\tilde X^{(q)}_{t-2},\dots,\tilde X^{(q)}_{t-qp},0,\cdots; \zeta_t),$$
     respectively. The existence of both of these processes is given by Proposition \ref{prop:truncate}. Assumption \ref{ass:contraction} yields that for any $t \in \Z$ 
     \begin{align}
         \E \left[ |\tilde X^{(m)}_t-\tilde X^{(q)}_t|_{ }\right] &= \E \left [ |f_t(\tilde X^{(m)}_{t-1},\tilde X^{(m)}_{t-2},\dots,\tilde X^{(m)}_{t-mp},0,\cdots; \zeta_t)-f_t(\tilde X^{(q)}_{t-1},\tilde X^{(q)}_{t-2},\dots,\tilde X^{(q)}_{t-qp},0,\cdots; \zeta_t)|_{ }\right]\nonumber \\
         &= \E \left [\E_{t-1} |f_t(\tilde X^{(m)}_{t-1},\tilde X^{(m)}_{t-2},\dots,\tilde X^{(m)}_{t-mp},0,\cdots; \zeta_t)-f_t(\tilde X^{(q)}_{t-1},\tilde X^{(q)}_{t-2},\dots,\tilde X^{(q)}_{t-qp},0,\cdots; \zeta_t)|_{ }\right]\nonumber \\
         &\preceq \sum_{k=1}^{mp}A_k \E |\tilde X^{(m)}_{t-k}-\tilde X^{(p)}_{t-k}|_{ } + \sum_{k=mp+1}^{qp} A_k \E |\tilde X^{(q)}_{t-k}|_{ }. \label{ineq:cauchy}
     \end{align}
     Since $\tilde X^{(q)}$ is periodic stationary, there exists $0\preceq C_q $ such that $\E |\tilde X^{(q)}_t|_{ }\preceq C_q$ for any $t\in \Z$ and 
     \begin{align*}
         \E |\tilde X^{(q)}_v|_{ } &\preceq \E |f_t(\tilde X^{(q)}_{v-1},\cdots,\tilde X^{(q)}_{v-qp},0,\cdots; \zeta_0)-f_t(0,\cdots; \zeta_0)|_{ } +\E |f_v(0,\cdots; \zeta_v)|_{ }\\
         &\preceq \sum_{k=1}^{qp} A_k \E|\tilde X^{(q)}_{v-k}|_{ } +\max_{i=1,\cdots p} \E |f_i(0,\cdots; \zeta_i)|_{ }\\
         &\preceq \sum_{k=1}^{+\infty} A_k C_q +|b|_{ },
     \end{align*}
     where $|b|_{ }= \max_{i=1,\cdots p} \E |f_i(0,\cdots; \zeta_i)|_{ }$. Thus, the upper bound $C_q$ satisfies 
     $$(I-S_A)C_q \preceq  |b|_{ },$$
     with $S_A=\sum_{k=1}^{+\infty} A_k$, a non-negative matrix with spectral radius strictly less than $1$. The inverse of $(I-S_A)$ is $\sum_{i=0}^{+\infty} S_A^{(i)}$ which is non-negative as well, hence multiplying by it does not change the sign of the inequality and we have that for any $t \in \Z$ and $q \in \N$
     \begin{equation}
     \label{ineq:terme-cst}
         \E |X^{(q)}_t|\preceq C_q \preceq (I-S_A)^{-1}|b|_{ }.
     \end{equation}
     Combining Inequalities \eqref{ineq:cauchy} and \eqref{ineq:terme-cst} yields 
     $$ \E \left[ |\tilde X^{(m)}_t-\tilde X^{(q)}_t|_{ }\right] \preceq \sum_{k=1}^{+\infty}A_k \E |\tilde X^{(m)}_{t-k}-\tilde X^{(p)}_{t-k}|_{ } + \sum_{k=mp+1}^{qp} A_k (I-S_A)^{-1}|b|_{ },$$
     which combined with Lemma \ref{lmm:convolution} yields
     $$\E |\tilde X^{(m)}_t-\tilde X^{(q)}_t|_{ } \preceq (I-S_A) ^{-1}\left(\sum_{k=mp+1}^{qp} A_k \right)(I-S_A)^{-1}|b|_{ }.$$
     The sequence $(\tilde X^{(m)})_{m\in \N}$ is thus a Cauchy sequence in the Banach space $L_1$, which means that it admits a unique limit $\tilde X$. Using the same arguments from Section 5.3 in \cite{DW}, we conclude regarding the periodic stationarity, measurability with respect to the filtration $\F_t = \sigma \left (\zeta_k, k\leq t\right)$ and the fact that $X$ solves \eqref{eq:inf_reg}.
\subsection{Proof of Proposition \ref{prop:speed}}
\label{sec:prop:speed}
  Using Assumption \ref{ass:contraction} we have that for any 
    \begin{align*}
        \E \left[|\tilde X_t - X_t|_{ } \right] & \preceq \sum_{k=1}^{+\infty} A_k \E [|\tilde X_{t-k}-X_{t-k}|_{ }]\\
        & = \sum_{k=1}^{t-1} A_k \E [|\tilde X_{t-k}-X_{t-k}|_{ }]+ \sum_{k=t}^{+\infty} A_k \E [|\tilde X_{t-k}-X_{t-k}|_{ }]\\
        & \preceq \sum_{k=1}^{t-1} A_k \E [|\tilde X_{t-k}-X_{t-k}|_{ }]+ \sum_{k=t}^{+\infty} A_k C,\\
    \end{align*}
    where $C$ is such that $x_{-k}+ \E[X_0] \preceq C$ for any $k \in \N$. Following the same techniques used in the proof of Lemma \ref{lmm:convolution} we have that 

    $$\E \left [|X_t-\tilde X_t|_{ } \right]\preceq \left(\sum_{k=1}^{t-1} B_kU_{t-k}\right)C,$$
    and since $\lim_{t \to +\infty} U_t = 0$ and $B\in \ell_1(\N^*)$, we have that $\lim_{t\to +\infty}\left(\sum_{k=1}^{t} B_kU_{t-k}\right)C=0$. \\
Assume now that for some $\beta >0$ and some nonnegative matrix $C$ such that $A_k \preceq C e^{-\beta k}$, then we have that 
\begin{equation}
\label{ineq:U_exp}
     U_t  =  \sum_{k=t}^{+\infty}A_k C \preceq C e^{-\beta t}. 
\end{equation}
Fix $\delta \in (0,\beta)$ and for any $n \in \N^*$, let $M_n:=\sum_{k=1}^{+\infty} e^{\delta k} A^{*n}_k$ which is an element of $\mathcal M_d([0,+\infty])$. We now prove that for $\delta$ small enough, $M_n$ is finite. For a given $n\in \mathbb N^*$ we have that
\begin{align*}
    M_{n+1}&=\sum_{k=1}^{+\infty} e^{\delta k} A^{*(n+1)}_k\\
    &=\sum_{k=1}^{+\infty} e^{\delta k}  \sum_{j=1}^{+\infty}A^{*n}_j A_{k-j} \boldsymbol{1}_{j\leq k-1}\\
    &=\sum_{j=1}^{+\infty} A^{*n}_j  \sum_{k=j+1}^{+\infty} e^{\delta k} A_{k-j}, 
\end{align*}
and, by a change of counter, we obtain that 
$$M_{n+1} = \sum_{j=1}^{+\infty} e^{\delta j}A^{*n}_j \sum_{k=1}^{+\infty} e^{\delta k}A_k  =M_n  \sum_{k=1}^{+\infty} e^{\delta k} A_{k}.$$
Since the function $g:\delta \to \rho \left( \sum_{k=1}^{+\infty} e^{\delta k} A_{k}\right)$ is continuous near zero and $g(0)<1$, there exists $\delta \in (0,\beta)$ such that  $\rho \left( \sum_{k=1}^{+\infty} e^{\delta k} A_{k}\right) <1$. Hence, $(M_n)_{n\in \N^*}$ decreases exponentially and therefore $$\sum_{k=1}^{+\infty} e^{\delta k} B_k = \sum_{n\geq 1} M_n < + \infty.$$
In this case, taking \eqref{ineq:U_exp} into consideration, we have that 
\begin{align*}
    \sum_{k=1}^{(t)} B_kU_{t-k} & = e^{-\delta t}\sum_{k=1}^{t-1} e^{k\delta}B_ke^{\delta(t-k)}U_{t-k}\\
    &\preceq C e^{-\delta t},
\end{align*}
which yields the exponential decay of the distance if the family $(A_k)_{k\in \N^*}$ vanishes exponentially fast. \\
Assume now that for some nonnegative matrix $C$ and $\beta>0$ we have that $A_k \preceq C k^{-2(1+\beta)}$, in this case the remainder sequence $U$ verifies 
\begin{align*}
    U_t &=\sum_{k=t}^{+\infty} k^{-(1+\beta)} k ^{1+\beta} A_kC\\
    & \preceq  t^{-(1+\beta)}\sum_{k=t}^{+\infty} k ^{1+\beta} A_kC\\
    & \preceq t^{-(1+\beta)}\sum_{k=t}^{+\infty} k ^{-(1+\beta)}C\\
    & \preceq  t^{-(1+2\beta)} C,
\end{align*}
which means that $U$ is in $\ell_1(\N)$. For a given $n\in\N^*$, define $M_n=\sum_{k=1}^{+\infty} k A^{*n}_k \in \mathcal M_d([0,+\infty])$. For a given $n \in \N^*$ we have that 
\begin{align*}
    M_{n+1} &= \sum_{k=1}^{+\infty}k A^{*(n+1)}_k\\
    &=\sum_{k=1}^{+\infty}k  \sum_{j=1}^{+\infty}A^{*n}_j A_{k-j} \boldsymbol{1}_{j\leq k-1}\\
    &=\sum_{j=1}^{+\infty}A^{*n}_j \sum_{k=1}^{+\infty} (k+j)A_k\\
    &=\left(\sum_{j=1}^{+\infty}A_j\right)^n M_1 + M_n \left(\sum_{k=1}^{+\infty}A_k\right),
\end{align*}
and since $\rho\left(\sum_{k=1}^{+\infty}A_k\right)<1$, we conclude that $(M_n)_{n\in \N^*} \in \ell_1(\N^*)$. therefore, $\sum_{k=1}^{+\infty}k B_k = \sum_{n\geq 1} M_k <+\infty$. We now proceed to bounding the convolution 
\begin{align*}
    \sum_{k=1}^{t-1} B_kU_{t-k} &= \sum_{k=1}^{\lfloor (t-1)/2 \rfloor} B_kU_{t-k} + \sum_{k=\lfloor (t-1)/2 \rfloor+1}^{t-1} B_kU_{t-k}\\
    &\preceq \sum_{k=1}^{\lfloor (t-1)/2 \rfloor} B_kC (t-k)^{-(1+2\beta)} +  \sum_{k=\lfloor (t-1)/2 \rfloor+1}^{t-1} Ck^{-1}U_{t-k}\\
    &\preceq C \left(\frac{2^{1+2\beta}}{t^{1+2\beta}} \|B\|_1+ \frac{2}{t} \|U\|_1 \right),
\end{align*}
which yields the desired result.

\subsection{Proof of Corollary \ref{cor:almost_sure}}
\label{sec:almost_sure}
    Without loss of generality we can assume that $X$ and $\tilde X$ are univariate, with the result for the multivariate case easily inferred. Let $\varepsilon >0$ and let $\delta \in (0,\beta)$ be as in Proposition \ref{prop:stationary}. Using Markov's inequality we have that 
    \begin{align*}
        \mathbb P \left(e^{\frac{\delta}{2}t}|\tilde X_t - X_t|\geq \varepsilon\right) &\leq \frac{e^{\frac{\delta}{2}t} \E[|\tilde X_t - X_t|]}{\varepsilon }\\
        &\leq \frac{e^{-\frac{\delta}{2}t}}{\varepsilon},
    \end{align*}
    which by Borel-Cantelli's Lemma yields that $e^{\frac{\delta}{2}t}|\tilde X_t - X_t|\to 0$ almost surely. The result follows immediately.

\subsection{Proof of Proposition \ref{prop:stationary}}
\label{sec:prop:stationary}
    Given the \textit{iid} (and thus \textit{ipd}) sequence of Poisson processes $(N_t)_{t\in \Z}$ and an infinite sequence of integer vectors $(x_1,x_2,\cdots)$, we define the periodic function 
    $$f_t(x_1,x_2,\cdots;N_t)=N_t \left(\psi\left(\mu _t + \sum_{k=1}^{+\infty}\phi^{(t)}_k x_k\right)\right).$$
    We clearly have that $\tilde Y$ as defined by Equation \eqref{def:TypeI_s} satisfies the recursion $$\tilde Y_{t}=f_{t}(\tilde Y_{t-1}, \tilde Y_{t-2},\cdots;N_t),$$
    and for $Y$ defined by Equation \eqref{def:TypeI} 
    $$ Y_{t}=f_{t}( Y_{t-1}, Y_{t-2},\cdots,Y_0,0,\cdots ;N_t).$$
    Clearly, we have that 
    \begin{align*}
        \E [|f_t (0,0,\cdots;N_t)|]&= \E [N_t \left(\psi (\mu_t)\right)]\\
        &=\psi(\mu_t)\\
        &\preceq \max_{v=1,\cdots,p} \psi(\mu_v),
    \end{align*}
    which is finite. Similarly, we have that for any $v \in \{1,\cdots,p\}$ 
    \begin{align*}
        \E[|f_v(x_1,x_2,\cdots;N_v)-&f_v(x'_1,x'_2,\cdots;N_v)|]\\
        &=\E \left[ \left| N_v \left( \psi \left(\mu_t + \sum_{k=1}^{+\infty}\phi^{(v)}_k x_k\right)\right)-N_v \left( \psi \left(\mu_t + \sum_{k=1}^{+\infty}\phi^{(v)}_k x'_k\right)\right)\right|\right]\\
        &=\E \left[ N_v \left ( \left | \psi\left(\mu_t+\sum_{k=1}^{+\infty}\phi^{(v)}_k x_k\right)-\psi \left(\mu_t+\sum_{k=1}^{+\infty}\phi^{(v)}_k x'_k \right)\right|\right)\right]\\
        &=\left | \psi\left(\mu_t+\sum_{k=1}^{+\infty}\phi^{(v)}_k x_k\right)-\psi \left(\mu_t+\sum_{k=1}^{+\infty}\phi^{(v)}_k x'_k \right)\right|,
    \end{align*}
    and using the fact that $\psi$ is $L-$Lipschitz we have 
    \begin{align*}
         \E[|f_v(x_1,x_2,\cdots;N_v)-f_v(x'_1,x'_2,\cdots;N_v)|]&\preceq L \left |\sum_{k=1}^{+\infty}\phi^{(v)}_k (x_k -x'_k) \right|\\
         &\preceq  \sum_{k=1}^{+\infty}L|\phi^{(v)}_k| \left |x_k -x'_k\right|\\
         &\preceq  \sum_{k=1}^{+\infty}A_k \left |x_k -x'_k\right|.\\
    \end{align*}
    Assumption \ref{ass:contraction} is thus in force and the results are obtained using Theorem \ref{thm:infinite} and Proposition \ref{prop:truncate}.\\
    For the moments, given a real number $r\geq 1 $ we introduce the vector adapted norm $\E^{1/r}[Y^r]:=(\E^{1/r}[Y_1^r],\cdots,\E^{1/r}[Y_d^r])$. Using the definition of $Y$, and applying the expectation and the powers component-wise we have that 
    \begin{align*}
        \E^{1/r}[Y^r_t]&=\E^{1/r} \left[N_t \left(\lambda_t\right)^r \right]\\
        &=\E^{1/r} \left[\E\left[N_t \left( \lambda_t\right)^r\Bigg | \mathcal F_{t-1}\right] \right].
    \end{align*}
    Let $\delta > 0$ be such that $\rho \left( (1+\delta)^{1/r}\sum_{k=1}^{+\infty} A_k\right)<1$. Using the proof of Lemma 2 in \cite{DT} we have that, component-wise 
    \begin{align*}
        \E^{1/r}[Y_t^r] &\preceq \E^{1/r} \left[(1+\delta)(\lambda_t)^r+C_{r,\delta} \mathds 1\right],
    \end{align*}
    where $\mathds 1$ is the vector with components $1$ and $C_{r,\delta}$ is a positive constant that does not depend on $\lambda_t$ and that can change from one line to the next. Using the fact that $(1+x)^{1/r}\leq 1+x^{1/r}$ for $x\geq 0$ we have that 
    \begin{align*}
          \E^{1/r}[Y_t^r] &\preceq (1+\delta)^{1/r} \E^{1/r} \left[(\lambda_t)^r +C_{r,\delta} \mathds 1\right]\\
          &= (1+\delta)^{1/r} C_{r,\delta}^{1/r}\E^{1/r} \left[\left(\frac{\lambda_t}{C_{r,\delta}^{1/r}}\right)^r +\mathds 1\right]\\
          &\preceq (1+\delta)^{1/r} C_{r,\delta}^{1/r} \left(\E^{1/r} \left[\left(\frac{\lambda_t}{C_{r,\delta}^{1/r}}\right)^r \right]+\mathds 1\right)\\
          &=(1+\delta)^{1/r} \E^{1/r} [(\lambda_t)^r] + C_{r,\delta} \mathds 1.
    \end{align*}
    Using Minkowski's inequality component-wise we obtain 
    \begin{align*}
        \E^{1/r}[Y_t^r] &\preceq (1+\delta)^{1/r} \E^{1/r}\left[\psi\left(\mu_t + \sum_{k=1}^{t-1} \phi^{(t)}_{t-k} Y_k\right)^r\right] + C_{r,\delta} \mathds 1 \\
        &\preceq (1+\delta)^{1/r}L \E^{1/r}\left[\left|\mu_t + \sum_{k=1}^{t-1} \phi^{(t)}_{t-k} Y_k\right |^r\right] + C_{r,\delta} \mathds 1 \\
        &\preceq (1+\delta)^{1/r}L\left(\mu_t + \sum_{k=1}^{t-1} |\phi^{(t)}_{t-k}|  \E^{1/r}\left[Y_k^r\right]\right) + C_{r,\delta} \mathds 1 \\
        &\preceq (1+\delta)^{1/r}L\left(\sum_{k=1}^{t-1} A_{t-k}  \E^{1/r}\left[Y_k^r\right]\right) + C_{r,\delta} \mathds 1.
    \end{align*}
    We thus conclude that the $r-$th moments are finite using Lemma \ref{lmm:convolution}.\\
    The same can be said about Type II periodicity (Equations \eqref{def:TypeII} and \eqref{def:TypeII_s}) using the periodic function $$f_t(x_1,x_2,\cdots;N_t)=N_t \left(\psi\left(\mu _t + \sum_{k=1}^{+\infty}\phi^{(t-k)}_k x_k\right)\right).$$ 

    \subsection{Proof of Proposition \ref{prop:continuity}}
    \label{sec:prop:continuity}
We start by proving the continuity result on Type I periodicity. Given $t \in \N^*$ we have
\begin{align*}
    \E \left[|Y_t-\bar Y_t| \right]&=\E \left[\left |N_t (\lambda_t) - N_t (\bar \lambda_t)\right | \right]\\
    &=\E \left [ \left | N_t \left (\psi \left(\mu_t + \sum_{k=1}^{t-1}\phi^{(t)}_{t-k}Y_k\right) \right) - N_t \left (\psi \left( \mu_t + \sum_{k=1}^{t-1}\bar\phi^{(t)}_{t-k} \bar Y_k\right) \right)\right |\right].
\end{align*}
By conditioning on $\mathcal F_{t-1}$ and using the fact that $\psi$ is $L-$Lipschitz 
\begin{align*}
     \E \left[|Y_t-\bar Y_t| \right]\preceq & L \E \left[\left |\sum_{k=1}^{t-1}\phi^{(t)}_{t-k}  Y_k-\bar\phi^{(t)}_{t-k} \bar Y_k \right| \right] \\
      \preceq & L \sum_{k=1}^{t-1}\E \left | \phi^{(t)}_{t-k}  Y_k-\bar\phi^{(t)}_{t-k} \bar Y_k\right|\\
     \preceq & L \sum_{k=1}^{t-1} |\phi^{(t)}_{t-k}|\E |Y_k-\bar Y_k| + L \sum_{k=1}^{t-1} |\phi^{(t)}_{t-k}-\bar \phi^{(t)}_{t-k}|\E|\bar Y_k| \\
     \preceq & L \sum_{k=1}^{t-1}A_{t-k}\E |Y_k-\bar Y_k| + L \sum_{k=1}^{t-1} |\phi^{(t)}_k-\bar \phi^{(t)}_k|C, \\
\end{align*}
where $C$ is an upper bound on $\E \bar Y_t$ (\textit{cf.} Proposition \ref{prop:stationary}). Bounding $ \sum_{k=1}^{t-1} |\phi^{(t)}_k-\bar \phi^{(t)}_k|$ from above by $\max_{v=1,\cdots,p}\|\phi^{(v)}-\bar \phi^{(v)}\|$ and using Lemma \ref{lmm:convolution} yield the desired result. \\
We now have the continuity result for $r>1$. For a general power $r$, we proceed by induction. Fix $r>1$ and assume that for all $i=1,\cdots,r-1$,
$$\E^{1/i}[|Y_t-\bar Y_t|^i]\preceq C \max_{v=1,\cdots,p} \|\phi^{(v)}-\bar\phi^{(v)}\|^{1/i}_1\quad \text{for all } t\geq 1.$$
Note that the induction hypothesis implies that $\E^{1/i}[|\lambda_t-\bar \lambda_t|^i]\preceq C \max_{v=1,\cdots,p} \|\phi^{(v)}-\bar\phi^{(v)}\|_1^{1/i} $ for all $ t\geq 1.$ 
Conditioning on $\mathcal F_{t-1}$, we have that
\begin{align*}
    \E[|Y_t-\bar Y_t|^r]&=\E \left[ \E\left[ |N_t(\lambda_t)-N_t(\bar \lambda _t)|^r | \mathcal F_{t-1}\right]\right]\\
    &=\E \left[\E\left[ N_t(|\lambda_t-\bar \lambda _t|)^r | \mathcal F_{t-1}\right]\right]\\
    &=\E \left[|\lambda_t-\bar \lambda_t|^r + \sum_{i=1}^{r-1}  {i\brace r}|\lambda_t-\bar \lambda_t|^i\right],
\end{align*}
where $ {i\brace r}$ are the Stirling coefficients of second kind, \textit {cf.} \cite{Johnson}. Hence, using the induction hypothesis 
\begin{align*}
    \E[|Y_t-\bar Y_t|^r]&\preceq \E [|\lambda_t-\bar \lambda_t|^r] + \sum_{i=1}^{r-1}  {i\brace r} \max_{v=1,\cdots,p} \|\phi^{(v)}-\bar\phi^{(v)}\|_1^i C\\
    &\preceq \E [|\lambda_t-\bar \lambda_t|^r] +\max_{v=1,\cdots,p} \|\phi^{(v)}-\bar\phi^{(v)}\|_1 C.\\
\end{align*}
 Taking the power $1/r$ and using Minkowski's inequality, the fact that $(x+y)^{1/r}\leq  x^{1/r}+y^{1/r}$ and the Lipschitz continuity of $\psi$ we have
\begin{align*}
     \E^{1/r}[|Y_t-\bar Y_t|^r]\preceq &\E [|\lambda_t-\bar \lambda_t|^r]^{1/r} + \left( \max_{v=1,\cdots,p} \|\phi^{(v)}-\bar\phi^{(v)}\|_1\right)^{1/r} C\\
     \preceq & L \E \left[\left |\sum_{k=1}^{t-1}\phi^{(t)}_{t-k}Y_k -\bar \phi^{(t)}_{t-k}\bar Y_k \right|^r\right]^{1/r}+ \left( \max_{v=1,\cdots,p} \|\phi^{(v)}-\bar\phi^{(v)}\|_1\right)^{1/r} C \\
          \preceq & L \sum_{k=1}^{t-1} |\phi^{(t)}_{t-k}|\E^{1/r} [|Y_k-\bar Y_k|^r]\\
          &+ L \sum_{k=1}^{t-1} |\phi^{(t)}_{t-k}-\bar \phi^{(t)}_{t-k}|\E^{1/r}[|\bar Y_k|^r] + \left( \max_{v=1,\cdots,p} \|\phi^{(v)}-\bar\phi^{(v)}\|_1\right)^{1/r} C \\
     \preceq & L \sum_{k=1}^{t-1}A_{t-k}\E ^{1/r}[|Y_k-\bar Y_k|^{r}]\\
     &+ \left( \max_{v=1,\cdots,p} \|\phi^{(v)}-\bar\phi^{(v)}\|_1+\left( \max_{v=1,\cdots,p} \|\phi^{(v)}-\bar\phi^{(v)}\|_1\right)^{1/r}\right)C, \\
\end{align*}
where we used Proposition \ref{prop:stationary} to bound $\E^{1/r}[|\bar Y_k|^r]$ from above. We conclude using Lemma \ref{lmm:convolution}.\\
For Type II periodicity, we have using the same arguments that
\begin{align*}
     \E \left[|Y_t-\bar Y_t| \right]\preceq & L \sum_{k=1}^{t-1}A_{t-k}\E |Y_k-\bar Y_k| + L \sum_{k=1}^{t-1} |\phi^{(k)}_{t-k}-\bar \phi^{(k)}_{t-k}|C, \\
\end{align*}
which yield the desired result by bounding $\sum_{k=1}^{t-1} |\phi^{(k)}_{t-k}-\bar \phi^{(k)}_{t-k}|$ from above by $\sum_{v=1}^p\|\phi^{(v)}-\bar \phi^{(v)}\|_1$ and applying Lemma \ref{lmm:convolution}. 

\subsection{Proof of Theorem \ref{thm:4.3}}
\label{proof:4.3}
\begin{proof}
    For the first point, we use Lemma \ref{lmm:l1_desity} to find $q \in \N^*$ and $(G^{(m)}_v)_{v\in 1\in\{1,\cdots,p\}, m\in\{1,\cdots,q\}}$ such that 
    $$\max_{v\in \{1,\cdots,p\}}\left \|\phi^{(v)} - \sum_{m=1}^{q} G_v^{(m)} e^{-(2m+1)\frac{\cdot}{\tau}}\right\|_1\preceq \frac{\varepsilon}{d C} \mathds 1,$$
    where $C$ is the constant that appears in the first point of Proposition \ref{prop:continuity}. We point out that $\varepsilon$ can be chosen small enough to ensure that for any $v \in \{1,\cdots,p\}$ 
    $$\left|\sum_{m=1}G_v^{(m)} e^{-(2m+1)\frac{k}{\tau}} \right| \preceq A_k\quad \text{for all }k\geq 1.$$
    Let $\bar Y$ be the Poisson autoregression constructed using $N$ and the kernel $\bar \phi ^{(t)}_k=\sum_{m=1}^{(q)} G_t^{(m)} e^{-(2m+1)\frac{k}{\tau}}$. 
    Using Proposition \ref{prop:continuity} we have that 
    $$\E [|Y_t-\bar Y_t|]\preceq \varepsilon \mathds 1.$$
    If we define $\xi^{(m)}_t=\sum_{k=1}^{t-1}e^{-(2m+1)\frac{t-k}{\tau}}Y_k$, then we clearly have that $(\bar Y, \xi^{(1)},\cdots,\xi^{(q)})$ is a Markov chain.
For the second point, we construct $\bar Y$ according to 
\begin{equation*}
    \begin{cases}
    \bar Y_t &=N_t \left(\lambda_t\right)\\
    \lambda_t &= \psi \left (\mu_t+ \sum_{k=1}^{t-1} \sum_{m=1}^{q} J^{(m)}_{k} e^{-(2m+1)\frac{(t-k)}{\tau}}Y_k\right)
    \end{cases}   .
\end{equation*}
We set $$\zeta^{(m)}_t=\sum_{k=1}^{t-1}J^{(m)}_ke^{-(2m+1)\frac{t-k}{\tau}}Y_k,$$
and using the properties of the exponential we get
$$\zeta^{(m)}_t= e^{-\frac{2m+1}{\tau}} \zeta^{(m)}_{t-1} + e^{-\frac{2m+1}{\tau}} J_{t-1}^{(m)} Y_{t-1}.$$
The result follows from the continuity of Poisson autoregressions with respect to the kernel (Proposition \ref{prop:continuity}) and the density of the exponential polynomials in $\ell_1(\N^*)$ (Lemma \ref{lmm:l1_desity}).
\end{proof}

\subsection{Proof of Theorem \ref{thm:strong-cosistency}}
\label{sec:thm:strong-cosistency}

   The proof of the strong consistency of the MLE for a general integer valued autoregression has been established in \cite{AF} and extended to the case of time series with periodically changing coefficients in \cite{almohaimeed} and to the multivariate case with a distribution in the exponential family in \cite{SKK}. These articles also prove strong consistency even if the distribution used in the likelihood is mis-specified (\textit{e.g.} the use of a negative binomial MLE on data coming from Poisson autoregession). We point out that \cite{AF} and \cite{almohaimeed} prove the strong consistency for general autoregressions of the form 
    \begin{equation*}
        \begin{cases}
            \E[Y_t |\mathcal F^Y_{t-1}]&=\lambda_t(\theta^*),\\
            \lambda_t(\theta ^*)&=f_t(Y_{t-1},Y_{t-2},\cdots;\theta^*),
        \end{cases}
    \end{equation*}
    that encompasses the form we are dealing with in this article. It is then sufficient to show that \eqref{eq:def_likelihood} satisfies Assumptions A1-A6 in \cite{almohaimeed}. \\
    Without loss of generality, we focus on the univariate case. The multivariate case is inferred component by component. 
    Given that Assumption \ref{ass:mle} is in force and using the fact that $\psi$ is Lipschitz continuous, A1, A2, A5 and A6 are satisfied. Proposition \ref{prop:stationary} yields that once the stability assumption is met, $\tilde Y$ has moments of any order $r\geq 1$, hence A4 holds. We now show that A3 also holds, \textit{i.e.} that the initial values are asymptotically not important. Given a strictly periodically stationary observation $(\tilde Y_t)_{t \in \Z}$, let 
    $$\tilde \lambda_t(\gamma)=\psi\left(\mu_t +\sum_{k=-\infty}^{t-1}\phi^{(t)}_{t-k}\tilde Y_k\right) \quad \text{and}\quad  \lambda_t(\gamma)=\psi\left(\mu_t +\sum_{k=0}^{t-1}\phi^{(t)}_{t-k}\tilde Y_k\right),$$
    where $\phi$ are exponential polynomials. Using the Lipschitz continuity of $\psi $ we have that 
    \begin{align*}
        \left|\tilde \lambda_t(\gamma)- \lambda_t(\gamma) \right| &\leq L \sum_{k=-\infty}^{0} |\phi^{(t)}_{t-k}|\tilde Y_k\\
        &\leq L \sum_{k=-\infty}^0 \sum_{m=1}^{(q)} |G_t^{(m)}| e^{-m\frac{t-k}{\tau}} \tilde Y_k,
    \end{align*}
    and since $\Gamma$ is a compact set, 
    \begin{align*}
        \sup_{\gamma \in \Gamma} \left|\tilde \lambda_t(\gamma)- \lambda_t(\gamma) \right| &\leq C \sum_{k=-\infty}^0 e^{-\frac{t-k}{\tau}}\tilde Y_k,
    \end{align*}
    for some positive constant $C$. For any $\varepsilon >0$ we have using Markov's inequality 
    \begin{align*}
        \mathbb P \left(e^{\frac{t}{2\tau}}\sup_{\gamma \in \Gamma} \left|\tilde \lambda_t(\gamma)- \lambda_t(\gamma) \right|\geq \varepsilon\right) &\leq \frac{Ce^{\frac{t}{2\tau}} \sum_{k=-\infty}^0 e^{-\frac{t-k}{\tau}}\E[\tilde Y_k]}{\varepsilon}\\
        &\leq  \frac{C e^{\frac{t}{2\tau}}\sum_{k=t}^{+\infty}e^{-\frac{k}{\tau}}}{\varepsilon}\\
        &\leq \frac{Ce^{-\frac{t}{2\tau}}}{\varepsilon},
    \end{align*}
    which yields by Borel-Cantelli's Lemma the almost sure existence of a constant $C$ such that 
    $$\sup_{\gamma \in \Gamma} \left|\tilde \lambda_t(\gamma)- \lambda_t(\gamma) \right| \leq C e^{-\frac{t}{2\tau}}.$$
    We then have that 
    $$\tilde Y_t \sup_{\gamma \in \Gamma} \left|\tilde \lambda_t(\gamma)- \lambda_t(\gamma) \right| \leq C e^{-\frac{t}{2\tau}} \tilde Y_t,$$
    which again by applying Markov's Inequality and the Borel-Cantelli Lemma yields $$\lim_{t \to +\infty}\tilde Y_t \sup_{\gamma \in \Gamma} \left|\tilde \lambda_t(\gamma)- \lambda_t(\gamma) \right|=0, \quad \text{almost surely}$$ and therefore the desired result for the parameters $\mu_v$ and $G_{v}^{(m)}$. For the reconstructed kernels, we have that 
    \begin{align*}
        \|\phi^{(v)}-\phi^{(v)}_T\|_1 &= \sum_{k=1}^{+\infty} |\phi^{(v)}_k-\phi^{(v)}_k|\\
        &\preceq  \sum_{m=1}^{(q)} |G^{(m)}_v-G^{(m)}_{v,T}|\sum_{k=1}^{+\infty} e^{-(2m+1)\frac{k}{\tau}},
    \end{align*}
    which tends to zero almost surely as $T$ goes to infinity.

\section{Preliminary lemmas and proofs}
\begin{Lemma}
    \label{lmm:spectral}
    Let $A$ and $B$ be two matrices with non-negative coefficients. If $A \preceq B$, then $\rho(A)\leq \rho(B)$.
\end{Lemma}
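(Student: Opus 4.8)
The plan is to reduce everything to the monotonicity of matrix powers under the entrywise order $\preceq$, combined with Gelfand's formula for the spectral radius.

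First I would prove by induction on $n \in \N^*$ that $0 \preceq A^n \preceq B^n$. The case $n=1$ is the hypothesis. For the inductive step, assume $0 \preceq A^n \preceq B^n$ and write $A^{n+1} = A A^n$. Since every entry of $A$ is nonnegative and $A^n \preceq B^n$ entrywise, left multiplication by $A$ preserves the order, so $A^{n+1} \preceq A B^n$; and since every entry of $B^n$ is nonnegative and $A \preceq B$, right multiplication by $B^n$ gives $A B^n \preceq B B^n = B^{n+1}$. Nonnegativity of $A^{n+1}$ is immediate as a product of nonnegative matrices, which completes the induction.

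Next I would pass to a scalar inequality by applying a submultiplicative matrix norm that is monotone for $\preceq$ on nonnegative matrices — for instance the entrywise sum norm $\|M\| = \sum_{i,j} |m_{ij}|$, which clearly satisfies $\|M\| \le \|N\|$ whenever $0 \preceq M \preceq N$. The previous step then gives $\|A^n\| \le \|B^n\|$ for every $n$. Finally, Gelfand's formula $\rho(M) = \lim_{n\to\infty}\|M^n\|^{1/n}$, valid for any matrix norm, lets me take $n$-th roots and pass to the limit, yielding $\rho(A) \le \rho(B)$.

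The argument is essentially routine; the only step worth stating carefully — and the closest thing to an obstacle — is verifying that multiplication by a nonnegative matrix is order-preserving for $\preceq$ and that the chosen norm is monotone, since the whole proof rests on these two elementary observations.
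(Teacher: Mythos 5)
Your proof is correct and follows essentially the same route as the paper: entrywise monotonicity of matrix powers for nonnegative matrices, a norm that is monotone with respect to $\preceq$ (you use the entrywise sum norm, the paper the maximum column sum norm), and Gelfand's formula to pass to the spectral radius. Your inductive argument just spells out in more detail the paper's one-line observation that $A \preceq B$ implies $A^j \preceq B^j$.
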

\begin{proof}
Since the matrix product is a combination of sums and products of the coefficients, we have that $A^j \preceq B^j$ for any $j\geq 1$. By taking $\|\cdot\|_1$ to be the maximum column sum norm and using the fact that the function $x\mapsto x^{1/j}$ is increasing we have that 
    $$\|A^j\|_1^{1/j} \leq \|B^j\|_1^{1/j}.$$
    We now let $j\to +\infty$ and use Gelfand's formula to obtain that $$\rho \left (A\right) \leq \rho \left (B\right) .$$ 
\end{proof}
\begin{Lemma}
    \label{lmm:convolution}
    Let $(A_k)_{k\in \N}$ be a family of non-negative matrices satisfying $\rho \left (\sum_{k=1}^{+\infty}A_j\right)<1$. If a  sequence $(x_t)_{t\in \Z}$ that takes finite values satisfies 
    $$|x_t|_{ }\preceq \sum_{k=1}^{+\infty}A_k |x_{t-k}|_{ } + |K_t|_{ }$$
    for any $t\in \Z$ and for some vector $(K_t)_{t \in \Z}$, then

    $$|x_t| \preceq \left (\left(\sum_{m=0}^{+\infty}  A ^{*m} \right)*|K|\right)_t.$$
    In particular, if there exists a nonnegative constant vector $\bar{K}$ such as $K_t \preceq \bar K$ then
    $$|x_t|_{ }\preceq \left (I-\sum_{k\geq 1}A_k \right)^{-1}|\bar K|_{ }.$$
\end{Lemma}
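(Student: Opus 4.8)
The plan is to iterate the displayed inequality and then pass to the limit, at which point everything collapses onto the Neumann series of $\left(I-\sum_{k\geq 1}A_k\right)^{-1}$. Write $S=\sum_{k\geq 1}A_k$, a nonnegative matrix with $\rho(S)<1$ by hypothesis, so that $I-S$ is invertible with nonnegative inverse $(I-S)^{-1}=\sum_{m\geq 0}S^m$. Introduce the operator $\mathcal T$ acting on $\Z$-indexed sequences of nonnegative vectors by $(\mathcal T y)_t=\sum_{k\geq 1}A_k\,y_{t-k}$; it is additive on such sequences and monotone for $\preceq$, and the hypothesis reads $|x_t|\preceq(\mathcal T|x|)_t+|K_t|$ for every $t\in\Z$.

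First I would prove by induction on $M\geq 0$ that
$$|x_t|\preceq\bigl(\mathcal T^{M+1}|x|\bigr)_t+\sum_{m=0}^{M}\bigl(\mathcal T^{m}|K|\bigr)_t ,$$
the inductive step using only additivity and monotonicity of $\mathcal T$ together with the hypothesis applied inside the term $\mathcal T^{M+1}|x|$. Unwinding the convolutions gives $\bigl(\mathcal T^{m}y\bigr)_t=\sum_{k\geq m}(A^{*m})_k\,y_{t-k}$, i.e. $\mathcal T^{m}y=A^{*m}*y$, and — since all matrices are nonnegative so the multi-index sum factorises — $\sum_{k\geq m}(A^{*m})_k=S^{m}$. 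In particular $\sum_{m\geq 0}\mathcal T^{m}|K|$ is exactly the convolution $\left(\sum_{m\geq 0}A^{*m}\right)*|K|$ of the statement, the $m=0$ term contributing the lag-$0$ matrix $A^{*0}=I$; moreover $\sum_{m\geq 0}A^{*m}$ lies in $\ell_1(\N)$ with total mass $(I-S)^{-1}$.

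It then remains to pass to the limit $M\to\infty$. Assuming $(x_t)_t$ is bounded, say $|x_t|\preceq R\,\mathds 1$ for all $t$ — which is the case in every application in this paper, the sequences there being periodic or convergent — one has $\bigl(\mathcal T^{M+1}|x|\bigr)_t\preceq S^{M+1}R\,\mathds 1\to 0$, because $\rho(S)<1$ forces $S^{M}\to 0$. Since the partial sums $\sum_{m=0}^{M}\mathcal T^{m}|K|$ are nondecreasing in $M$, letting $M\to\infty$ in the displayed estimate yields $|x_t|\preceq\left(\left(\sum_{m\geq 0}A^{*m}\right)*|K|\right)_t$, which is the first claim. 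For the ``in particular'' part, if $K_t\preceq\bar K$ for all $t$ then $\mathcal T^{m}|K|\preceq\mathcal T^{m}\bar K=S^{m}\bar K$ (the constant sequence $\bar K$ is sent by $\mathcal T$ to the constant sequence $S\bar K$), so $\sum_{m\geq 0}\mathcal T^{m}|K|\preceq\sum_{m\geq 0}S^{m}\bar K=(I-S)^{-1}\bar K$, giving $|x_t|\preceq\left(I-\sum_{k\geq 1}A_k\right)^{-1}|\bar K|$.

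The one delicate point is the vanishing of the remainder $\mathcal T^{M+1}|x|$: it genuinely uses boundedness of $(x_t)_t$ (subexponential growth at rate below $\rho(S)^{-1}$ would also suffice), since an unbounded history can otherwise sustain a solution violating the conclusion. I would therefore carry the boundedness of $(x_t)_t$ explicitly, or add a one-line remark to that effect; everything else is the bookkeeping of nonnegative convolution powers sketched above.
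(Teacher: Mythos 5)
Your proof follows essentially the same route as the paper's: iterate the inequality (equivalently, convolve it with the powers $A^{*m}$), telescope the resulting bounds, and let the remainder vanish using $\rho\left(\sum_{k\geq 1}A_k\right)<1$, then specialise to a constant bound $\bar K$ via the Neumann series $\left(I-\sum_{k\geq 1}A_k\right)^{-1}=\sum_{m\geq 0}S^m$. Your explicit observation that killing the remainder term $\mathcal{T}^{M+1}|x|$ requires boundedness (or sub-exponential growth) of $(x_t)$ is well taken --- the paper's proof glosses over this by merely ``noticing that $\lim_{n\to+\infty}\tilde A^{*n}=0$'', and boundedness does hold in every application of the lemma --- but it does not alter the substance of the argument.
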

\begin{proof}
    First, set $\tilde A_k = A_k \boldsymbol{1}_{k\geq 1}$ for any $k \in \Z$. We recall that the discrete convolution operator 
    $$(a*b)_t=\sum_{j=-\infty}^{+\infty} a_jb_{t-j}$$
    is associative and that for $a$ and $b$ summable families of matrices $\sum_{t=-\infty}^{+\infty} (a*b)_t = \left(\sum_{t=-\infty}^{+\infty} a_t \right)\left(\sum_{t=-\infty}^{+\infty} b_t \right)$. \\
    In particular, this means that for a given $m \in \N$, $\sum_{k=-\infty}^{+\infty} \tilde A^{*m}_k = \left(\sum_{k=-\infty}^{+\infty}\tilde A _k\right)^{(m)}$, where $\tilde A ^{*m}$ is given recursively by $\tilde A^{*0}=\delta_k I$ and $\tilde A^{*(k+1)}=\tilde A * \tilde A^{*k}$.
    When can then write for any $t \in \Z$ 
    $$|x_t|_{ }\preceq \left(\tilde A*|x|_{ }\right)_t + |K|_{ }.$$
    For any given $m\in \N$, taking the convolution by $\tilde A ^{*m}$ to the right and the left and rearranging the terms yield
    \begin{align}
    \label{ineq:convolution}
        \left(\tilde A^{*m}*|x|_{ }\right)_t-\left(\tilde A^{*(m+1)}*|x|_{ }\right)_t \preceq \left(\tilde A ^{*m}*|K|_{ }\right)_t.
    \end{align}
    Summing the last inequality for $m$ ranging from $0$ to some $n \in \N$ yields 
    $$|x_t|_{ }-\left(\tilde A ^{*(n+1)}*|x|_{ }\right)_t \preceq \sum_{m=0}^{n}\left(\tilde A ^{*m}*|K|_{ }\right)_t.$$
    Since $\rho \left(\sum_{k=-\infty}^{+\infty}\tilde A_k\right)<1$, the matrix $B:=\sum_{n\geq 0} \tilde A^{*n}$ is well-defined and $$\sum_{k=-\infty}^{+\infty} B_k= (I-\sum_{k=-\infty}^{+\infty}\tilde A_k).$$ Hence, letting $n$ go to infinity in \eqref{ineq:convolution} and noticing that $\lim_{n\to +\infty}\tilde A^{*n}=0$ yields the result.  
\end{proof}

 \begin{Lemma}
   \label{lmm:l2_density}
       Let $(\phi_k)_{k\in \N}$ be a square integrable sequence and fix $\tau >0$. For a given $\varepsilon >0$, there exists $r \in \N^*$ and $(\nu^r_1, \cdots, \nu^r_r) \in \R ^ r$ such that 
       $$\left ( \sum_{k=0}^{+\infty} \left( \phi_k-\sum_{m=1}^r \nu_m^r e^{-2\frac{k m}{\tau}} \right)^2\right)^{1/2} \leq \varepsilon. $$
   \end{Lemma}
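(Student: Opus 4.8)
The plan is to read this as a density statement in the Hilbert space $\ell^2(\N)$ and to establish it by showing that the orthogonal complement of the relevant span is trivial. First I would set $x := e^{-2/\tau}$, which lies in $(0,1)$ since $\tau>0$, so that the candidate approximants $k\mapsto e^{-2km/\tau}=x^{mk}$ are precisely the geometric sequences $v^{(m)}:=(x^{mk})_{k\ge 0}$ with ratios $x^m\in(0,1)$. Let $V:=\overline{\operatorname{span}}\{v^{(m)}:m\in\N^*\}\subseteq\ell^2(\N)$. It suffices to prove $V=\ell^2(\N)$: then for any square-summable $(\phi_k)$ and any $\varepsilon>0$ there is a finite linear combination $\sum_{m\in S}\nu_m v^{(m)}$ within $\ell^2$-distance $\varepsilon$ of $(\phi_k)$, and taking $r:=\max S$ together with $\nu^r_m:=0$ for $m\in\{1,\dots,r\}\setminus S$ and $\nu^r_m:=\nu_m$ for $m\in S$ produces exactly the stated form.

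By the projection theorem, $V=\ell^2(\N)$ is equivalent to $V^{\perp}=\{0\}$, so I would take an arbitrary $a=(a_k)_{k\ge 0}\in V^{\perp}$; by definition $\sum_{k\ge 0}a_k x^{mk}=0$ for every $m\ge 1$. The crucial step is to pass to the power series $f(z):=\sum_{k\ge 0}a_k z^k$: since $(a_k)\in\ell^2$ is in particular bounded, the radius of convergence is at least $1$, so $f$ is holomorphic on the open unit disc $\mathbb D$. The hypothesis says that $f$ vanishes at every point of the sequence $(x^m)_{m\ge 1}$, which lies in $\mathbb D$ and converges to $0\in\mathbb D$. Hence the zero set of $f$ has an accumulation point interior to $\mathbb D$, and the identity theorem for holomorphic functions forces $f\equiv 0$ on $\mathbb D$; comparing Taylor coefficients gives $a_k=0$ for all $k$, i.e.\ $a=0$. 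This shows $V^{\perp}=\{0\}$ and hence $V=\ell^2(\N)$, completing the argument.

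The only point requiring a little care — and the one I would flag as the main (mild) obstacle — is the passage to the holomorphic function and the verification that $0$ is genuinely an interior accumulation point of its zeros, so that the identity theorem truly applies; everything else (convergence of the power series, the projection-theorem reduction, and the bookkeeping to match the indexing $m=1,\dots,r$) is routine. I note that this is essentially a Hardy-space version of a classical M\"untz--Sz\'asz-type density statement, and that it works for \emph{every} fixed $\tau>0$ — precisely the feature exploited elsewhere in the paper, where the characteristic time $\tau$ is treated as known rather than estimated.
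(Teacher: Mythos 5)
Your proposal is correct, and it follows the same overall strategy as the paper: reduce the statement to showing that the closed span of the geometric sequences $\bigl(e^{-2mk/\tau}\bigr)_{k\ge 0}$, $m\ge 1$, is all of $\ell^2(\N)$, take an element of the orthogonal complement, and encode the orthogonality relations as the vanishing of the power series $f(z)=\sum_k \phi_k z^k$ at the points $e^{-2m/\tau}\to 0$. Where you differ is in the finishing step. You observe that $f$ is holomorphic on the open unit disc (radius of convergence at least $1$ since an $\ell^2$ sequence is bounded) and that its zeros accumulate at the interior point $0$, so the identity theorem forces $f\equiv 0$ and hence all coefficients vanish. The paper instead avoids complex analysis entirely: it only uses continuity of the power series at $0$ to get $f(0)=0$, i.e.\ $\phi_0=0$, and then iterates the argument on the shifted series $f_j(x)=\sum_k \phi_{k+j}x^k$ (noting $f_j(e^{-2n/\tau})=e^{2nj/\tau}\langle \phi, e^{-2n\cdot/\tau}\rangle=0$) to conclude $\phi_j=0$ by induction. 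Your route is shorter and appeals to a standard theorem; the paper's induction is more elementary and self-contained, using nothing beyond real continuity at the origin. Both arguments are valid, and your bookkeeping (padding the coefficient vector with zeros to reach the stated indexing $m=1,\dots,r$) is fine.
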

   \begin{proof}
       We show that the linear span of $\left \{\left ( e^{-2\frac{nk}{\tau}}\right)_{k\in \N}; n\in \N \right \}$ is dense in $\ell_2$. To do so, we show that the only square integrable sequence $(\phi_k)_{k\in N}$ to satisfy
       \begin{equation}
       \label{density}
       \left \langle \phi, e^{-2\frac{n \cdot}{\tau}} \right \rangle=0, \quad \text{for all } n \in \N^* 
       \end{equation}
       is the sequence $\phi=0$. Let $\phi \in \ell _2$ satisfying \eqref{density}. Define the function $$f(x)=\sum_{k=0}^{+\infty}\phi_{k}x^k$$ on $(-1,1)$. $f$ is continuous in the vicinity of $0$ ($\phi$ being square integrable) and satisfies 
       $$f \left(e^{-2\frac{n}{\tau}}\right)=0, \quad \text{for all $n \in \N$},$$
       thus by letting $n \to +\infty$, $f(0)=0$. From that we deduce that $$\phi_0=0.$$
       Let $j$ be a positive integer and suppose that $\psi_0=\cdots=\psi_{j-1}=0$ and define $f_j(x)=\sum_{k=0}^{+\infty}\phi_{k+j}x^k$ (continuous at zero). For $n \in \N$ we have that 
       \begin{align*}
           f_j(e^{-2\frac{n}{\tau}})&=\sum_{k=0}^{+\infty} \phi_{k+j}e^{-2\frac{n k}{\tau}}\\
           &=e^{2\frac{nj}{\tau}} \left \langle \phi, e^{-2\frac{n\cdot}{\tau}}\right \rangle\\
           &=0.
       \end{align*}
       Using continuity, we have that $f_j(0)=0$, yielding $\phi_j=0$. The result then follows by induction. 
   \end{proof}

   \begin{Lemma}
       \label{lmm:l1_desity}
       Let $(\phi_k)_{k\in \N}$ be an $\ell_1$ sequence and fix  $\tau >0$. For a given $\varepsilon >0$, there exists $r \in \N^*$ and $(\nu^r_1, \cdots, \nu^r_r) \in \R ^ r$ such that 
       $$ \sum_{k=0}^{+\infty} \left |\phi_k-\sum_{m=1}^r \nu_m^r e^{-(2m+1)\frac{k }{\tau}} \right| \leq \varepsilon. $$
   \end{Lemma}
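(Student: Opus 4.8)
The plan is to bootstrap the $\ell_2$-density statement of Lemma \ref{lmm:l2_density} into the desired $\ell_1$-density statement. The key observation is that an approximation in $\ell_2(\N)$ becomes an approximation in $\ell_1(\N)$ once it is multiplied by the summable weight $(e^{-k/\tau})_{k\in\N}$, via the Cauchy--Schwarz inequality; the cost of inserting this extra factor of $e^{-k/\tau}$ is precisely the shift from the even exponentials $e^{-2mk/\tau}$ appearing in Lemma \ref{lmm:l2_density} to the odd exponentials $e^{-(2m+1)k/\tau}$ appearing here, which is why the rest of the paper works with odd powers.

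First I would reduce to finitely supported sequences. Since $(\phi_k)_{k\in\N}\in\ell_1(\N)$, choose $N\in\N$ with $\sum_{k>N}|\phi_k|<\varepsilon/2$ and let $\phi^N$ be the truncation ($\phi^N_k=\phi_k$ for $k\le N$ and $\phi^N_k=0$ otherwise), so that $\|\phi-\phi^N\|_{\ell_1}<\varepsilon/2$; it then suffices to approximate $\phi^N$ within $\varepsilon/2$. Next, set $\psi_k:=e^{k/\tau}\phi^N_k$. Because $\phi^N$ has finite support, the sequence $(\psi_k)_{k\in\N}$ is finitely supported and in particular lies in $\ell_2(\N)$, so Lemma \ref{lmm:l2_density} applies to it: with tolerance $\delta:=\tfrac{\varepsilon}{2}\sqrt{1-e^{-2/\tau}}>0$ there exist $r\in\N^*$ and $(\nu^r_1,\dots,\nu^r_r)\in\R^r$ with
$$\left(\sum_{k=0}^{+\infty}\left(\psi_k-\sum_{m=1}^r \nu^r_m e^{-2mk/\tau}\right)^{\!2}\right)^{\!1/2}\le \delta.$$

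Finally I would multiply back through by $e^{-k/\tau}$ and apply Cauchy--Schwarz. Using $e^{-(2m+1)k/\tau}=e^{-k/\tau}e^{-2mk/\tau}$ and $\phi^N_k=e^{-k/\tau}\psi_k$,
$$\sum_{k=0}^{+\infty}\left|\phi^N_k-\sum_{m=1}^r \nu^r_m e^{-(2m+1)k/\tau}\right|=\sum_{k=0}^{+\infty}e^{-k/\tau}\left|\psi_k-\sum_{m=1}^r \nu^r_m e^{-2mk/\tau}\right|\le\left(\sum_{k=0}^{+\infty}e^{-2k/\tau}\right)^{\!1/2}\!\delta=\frac{\delta}{\sqrt{1-e^{-2/\tau}}}=\frac{\varepsilon}{2},$$
since $\sum_{k\ge 0}e^{-2k/\tau}=(1-e^{-2/\tau})^{-1}$. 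Combining this with $\|\phi-\phi^N\|_{\ell_1}<\varepsilon/2$ through the triangle inequality gives $\sum_{k}|\phi_k-\sum_{m}\nu^r_m e^{-(2m+1)k/\tau}|\le\varepsilon$, which is the claim.

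The only real subtlety — and the step I would flag as the ``hard part'' — is the truncation in the first step: one cannot apply the weighting trick directly to $\phi$, because an $\ell_1$ sequence need not satisfy $(e^{k/\tau}\phi_k)_k\in\ell_2$ (take $\phi_k=k^{-2}$), so the finite-support reduction is exactly what is needed to make the weighted sequence square-summable. Everything after that is a one-line Cauchy--Schwarz estimate. (As an aside, one could instead prove the $\ell_1$-density directly by Hahn--Banach: a functional $a\in\ell_\infty$ annihilating every $(e^{-(2m+1)k/\tau})_k$ makes the analytic function $z\mapsto\sum_k a_k z^k$ on the unit disc vanish along the sequence $e^{-(2m+1)/\tau}\to 0$, forcing $a=0$ by the identity theorem; but that argument does not single out odd powers, whereas the route above does, in line with the remark following Theorem \ref{thm:4.3}.)
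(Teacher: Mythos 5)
Your proposal is correct and follows essentially the same route as the paper's own proof: truncate $\phi$ in $\ell_1$, apply the $\ell_2$-density Lemma \ref{lmm:l2_density} to the exponentially reweighted finitely supported sequence, and convert back to $\ell_1$ via Cauchy--Schwarz against the geometric weight $e^{-k/\tau}$, with the same choice of tolerance $\frac{\varepsilon}{2}\sqrt{1-e^{-2/\tau}}$. Your remark on why the truncation is indispensable (the weighted sequence $e^{k/\tau}\phi_k$ need not be square-summable) is a correct reading of the role that step plays in the paper's argument as well.
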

   \begin{proof}
       Let $\varepsilon>0$. Using the fact that the remainder of the sum of an $\ell_1$ series tends to zero, we can find $T>0$ such that 
       \begin{align*}
           \sum_{k=0}^{+\infty} |\phi_k-\phi_k \boldsymbol{1} _{k\leq T}| \leq \frac{\varepsilon}{2}. 
       \end{align*}
       The sequence $(\phi_k \boldsymbol{1}_{k\leq T}e^{\frac{k}{\tau}})_{k\in \N}$ is of a finite support and hence square integrable. Using Lemma \ref{lmm:l2_density} we have that
       \begin{equation}
           \label{ineq:l2}
           \left(\sum_{k=0}^{+\infty} \left( \phi_k \boldsymbol{1}_{k\leq T} e^{\frac{k}{\tau}}-\sum_{m=1}^r \nu_m^r e^{-2\frac{k m}{\tau}}\right)^2\right)^{1/2} \leq \frac{\varepsilon \sqrt{1-e^{-2\tau^{-1}}}}{2},
       \end{equation}
       for some $r\in \N^*$ and $(\nu_1^r,\cdots,\nu_r^r)$. Thus, using Cauchy-Schwarz's inequality we have that 
       \begin{align}
           \sum_{k=0}^{+\infty} \left|\phi_k \boldsymbol{1}_{k\leq T}-\sum_{m=1}^r \nu_m^r e^{-(2m+1)\frac{k }{\tau}} \right| &=\sum_{k=0}^{+\infty} \left|\phi_k \boldsymbol{1}_{k\leq T}e^{\frac{k}{\tau}}-\sum_{m=1}^r \nu_m^r e^{-2m\frac{k }{\tau}} \right| e^{-\frac{k}{\tau}} \nonumber\\
           & \leq \left( \sum_{k=0}^{+\infty} \left|\phi_k \boldsymbol{1}_{k\leq T}e^{\frac{k}{\tau}}-\sum_{m=1}^r \nu_m^r e^{-2m\frac{k }{\tau}} \right|^2 \right) ^{1/2} \left (\sum_{k=0}^{+\infty} e^{-2\frac{k}{\tau}} \right)^{1/2} \label{ineq:bound_l_2}\\
           &=\left( \sum_{k=0}^{+\infty} \left|\phi_k \boldsymbol{1}_{k\leq T}e^{\frac{k}{\tau}}-\sum_{m=1}^r \nu_m^r e^{-2m\frac{k }{\tau}} \right|^2 \right) ^{1/2} \frac{1}{\sqrt{1-e^{-2\tau^{-1}}}}\nonumber \\
           &\leq \frac{\varepsilon}{2} \quad \text{using \eqref{ineq:l2}} \nonumber .
       \end{align}
       And finally, 
       \begin{align*}
           \sum_{k=0}^{+\infty} \left|\phi_k -\sum_{m=1}^r \nu_m^r e^{-(2m+1)\frac{k }{\tau}} \right| &\leq \sum_{k=0}^{+\infty} \left|\phi_k - \phi_k \boldsymbol{1}_{k\leq T} \right| + \sum_{k=0}^{+\infty} \left|\phi_k  \boldsymbol{1}_{k\leq T}-\sum_{m=1}^r \nu_m^r e^{-(2m+1)\frac{k }{\tau}} \right|\\
           & \leq \frac{\varepsilon}{2} + \frac{\varepsilon}{2}\\
           &\leq \varepsilon.
       \end{align*}
   \end{proof}


\begin{thebibliography}{10}

\bibitem{ABD}Aknouche, A., Bentarzi, W. \& Demouche, N. On periodic ergodicity of a general periodic mixed Poisson autoregression. {\em Statist. Probab. Lett.}. \textbf{134} pp. 15-21 (2018), https://doi.org/10.1016/j.spl.2017.10.014
\bibitem{DW}Doukhan, P. \& Wintenberger, O. Weakly dependent chains with infinite memory. {\em Stochastic Process. Appl.}. \textbf{118}, 1997-2013 (2008), https://doi.org/10.1016/j.spa.2007.12.004
\bibitem{AAD}Aknouche, A., Al-Eid, E. \& Demouche, N. Generalized quasi-maximum likelihood inference for periodic conditionally heteroskedastic models. {\em Stat. Inference Stoch. Process.}. \textbf{21}, 485-511 (2018), https://doi.org/10.1007/s11203-017-9160-x
\bibitem{ArFok}Armillotta, M. \& Fokianos, K. Count network autoregression. {\em Journal Of Time Series Analysis}. \textbf{45}, 584-612 (2024), https://onlinelibrary.wiley.com/doi/abs/10.1111/jtsa.12728
\bibitem{KLNN}Knight, M., Leeming, K., Nason, G. \& Nunes, M. Generalized Network Autoregressive Processes and the GNAR Package. {\em Journal Of Statistical Software}. \textbf{96}, 1-36 (2020), https://www.jstatsoft.org/index.php/jss/article/view/v096i05
\bibitem{Aknouche}Aknouche, A. Periodic autoregressive stochastic volatility. {\em Stat. Inference Stoch. Process.}. \textbf{20}, 139-177 (2017), https://doi.org/10.1007/s11203-016-9139-z
\bibitem{Glad}Gladyšev, E. Periodically correlated random sequences. {\em Dokl. Akad. Nauk SSSR}. \textbf{137} pp. 1026-1029 (1961)
\bibitem{TG}Tiao, G. \& Grupe, M. Hidden periodic autoregressive-moving average models in time series data. {\em Biometrika}. \textbf{67}, 365-373 (1980), https://doi.org/10.2307/2335479
\bibitem{DT}Debaly, Z. \& Truquet, L. A note on the stability of multivariate non-linear time series with an application to time series of counts. {\em Statist. Probab. Lett.}. \textbf{179} pp. Paper No. 109196, 7 (2021), https://doi.org/10.1016/j.spl.2021.109196
\bibitem{RS}Rota, G. \& Strang, G. A note on the joint spectral radius. {\em Indag. Math.}. \textbf{22} pp. 379-381 (1960), Nederl. Akad. Wetensch. Proc. Ser. A <b>63</b>
\bibitem{WS}Wu, W. \& Shao, X. Limit theorems for iterated random functions. {\em J. Appl. Probab.}. \textbf{41}, 425-436 (2004), https://doi.org/10.1239/jap/1082999076
\bibitem{Friedland}Friedland, S. Infimum of a matrix norm of A induced by an absolute vector norm. {\em Southeast Asian Bull. Math.}. \textbf{46}, 713-720 (2022)
\bibitem{friedland_book}Friedland, S. Matrices—algebra, analysis and applications. (World Scientific Publishing Co. Pte. Ltd., Hackensack, NJ,2016)
\bibitem{DP}Dedecker, J. \& Prieur, C. Coupling for $\tau$-dependent sequences and applications. {\em J. Theoret. Probab.}. \textbf{17}, 861-885 (2004), 
\bibitem{Straumann}Straumann, D. Estimation in conditionally heteroscedastic time series models. (Springer-Verlag, Berlin,2005)
\bibitem{Almohaimeed_2023}Almohaimeed, B. Ergodic Properties of Periodic Integer-Valued GARCH Models. {\em Advances And Applications In Statistics}. \textbf{72}, 55-70 (2022,1)
\bibitem{aknouche:tel-04553687}Aknouche, A. Modèles de séries chronologiques homogènes et périodiquement homogènes : stabilité, inférence et applications. (Université Charles de Gaulle (Lille 3), FRA.,2014,3), https://hal.science/tel-04553687
\bibitem{DFT}Doukhan, P., Fokianos, K. \& Tjøstheim, D. On weak dependence conditions for Poisson autoregressions. {\em Statist. Probab. Lett.}. \textbf{82}, 942-948 (2012), https://doi.org/10.1016/j.spl.2012.01.015
\bibitem{FSTD}Fokianos, K., Støve, B., Tjøstheim, D. \& Doukhan, P. Multivariate count autoregression. {\em Bernoulli}. \textbf{26}, 471-499 (2020), https://doi.org/10.3150/19-BEJ1132
\bibitem{FLO}Ferland, R., Latour, A. \& Oraichi, D. Integer-valued GARCH process. {\em J. Time Ser. Anal.}. \textbf{27}, 923-942 (2006), https://doi.org/10.1111/j.1467-9892.2006.00496.x
\bibitem{Bessy-Roland_Boumezoued_Hillairet_2021}Bessy-Roland, Y., Boumezoued, A. \& Hillairet, C. Multivariate Hawkes process for cyber insurance. {\em Annals Of Actuarial Science}. \textbf{15}, 14-39 (2021)
\bibitem{mahmoud}Huang, L. \& Khabou, M. Nonlinear Poisson autoregression and nonlinear Hawkes processes. {\em Stochastic Process. Appl.}. \textbf{161} pp. 201-241 (2023), https://doi.org/10.1016/j.spa.2023.03.015
\bibitem{Johnson}Johnson, N., Kemp, A. \& Kotz, S. Univariate discrete distributions. (Wiley-Interscience [John Wiley \& Sons], Hoboken, NJ,2005), https://doi.org/10.1002/0471715816
\bibitem{SKK}Lee, S., Kim, D. \& Kim, B. Modeling and inference for multivariate time series of counts based on the INGARCH scheme. {\em Comput. Statist. Data Anal.}. \textbf{177} pp. Paper No. 107579, 18 (2023), https://doi.org/10.1016/j.csda.2022.107579
\bibitem{DDM}Douc, R., Doukhan, P. \& Moulines, E. Ergodicity of observation-driven time series models and consistency of the maximum likelihood estimator. {\em Stochastic Process. Appl.}. \textbf{123}, 2620-2647 (2013), https://doi.org/10.1016/j.spa.2013.04.010
\bibitem{ALL}Armillotta, M., Luati, A. \& Lupparelli, M. Observation-driven models for discrete-valued time series. {\em Electron. J. Stat.}. \textbf{16}, 1393-1433 (2022), https://doi.org/10.1214/22-ejs1989
\bibitem{almohaimeed}Almohaimeed, B. Asymptotic negative binomial quasi-likelihood inference for periodic integer-valued time series models. {\em Comm. Statist. Theory Methods}. \textbf{53}, 587-606 (2024), https://doi.org/10.1080/03610926.2022.2087881
\bibitem{AF}Ahmad, A. \& Francq, C. Poisson QMLE of count time series models. {\em J. Time Series Anal.}. \textbf{37}, 291-314 (2016), https://doi.org/10.1111/jtsa.12167
\bibitem{knight2016modellingdetrendingdecorrelationnetwork}Knight, M., Nunes, M. \& Nason, G. Modelling, Detrending and Decorrelation of Network Time Series.  (2016), https://arxiv.org/abs/1603.03221
\bibitem{JSSv096i05}Knight, M., Leeming, K., Nason, G. \& Nunes, M. Generalized Network Autoregressive Processes and the GNAR Package. {\em Journal Of Statistical Software}. \textbf{96}, 1-36 (2020), https://www.jstatsoft.org/index.php/jss/article/view/v096i05
\bibitem{Zhu}Zhu, X., Pan, R., Li, G., Liu, Y. \& Wang, H. Network vector autoregression. {\em Ann. Statist.}. \textbf{45}, 1096-1123 (2017), https://doi.org/10.1214/16-AOS1476
\bibitem{liu2023newmethodsnetworkcount}Liu, H. \& Nason, G. New Methods for Network Count Time Series.  (2023), https://arxiv.org/abs/2312.01944
\bibitem{CGMT}Costa, M., Graham, C., Marsalle, L. \& Tran, V. Renewal in Hawkes processes with self-excitation and inhibition. {\em Adv. In Appl. Probab.}. \textbf{52}, 879-915 (2020), https://doi.org/10.1017/apr.2020.19
\bibitem{Berbee}Berbee, H. Chains with infinite connections: uniqueness and Markov representation. {\em Probab. Theory Related Fields}. \textbf{76}, 243-253 (1987), https://doi.org/10.1007/BF00319986
\bibitem{AA}Alzaid, A. \& Al-Osh, M. An integer-valued pth-order autoregressive structure (INAR(p)) process. {\em J. Appl. Probab.}. \textbf{27}, 314-324 (1990), https://doi.org/10.2307/3214650
\bibitem{Kirchner}Kirchner, M. Hawkes and INAR($\infty$) processes. {\em Stochastic Process. Appl.}. \textbf{126}, 2494-2525 (2016), https://doi.org/10.1016/j.spa.2016.02.008
\bibitem{coutin2024functionalapproximationmarkedhawkes}Coutin, L. \& Khabou, M. Functional approximation of the marked Hawkes risk process.  (2024), https://arxiv.org/abs/2409.06276
\bibitem{FRT}Fokianos, K., Rahbek, A. \& Tjøstheim, D. Poisson autoregression. {\em J. Amer. Statist. Assoc.}. \textbf{104}, 1430-1439 (2009), https://doi.org/10.1198/jasa.2009.tm08270, With electronic supplementary materials available online
\bibitem{FoTj}Fokianos, K. \& Tjøstheim, D. Nonlinear Poisson autoregression. {\em Ann. Inst. Statist. Math.}. \textbf{64}, 1205-1225 (2012), https://doi.org/10.1007/s10463-012-0351-3
\bibitem{EGG}Errais, E., Giesecke, K. \& Goldberg, L. Affine point processes and portfolio credit risk. {\em SIAM J. Financial Math.}. \textbf{1}, 642-665 (2010), https://doi.org/10.1137/090771272
\bibitem{BONNET2022109550}Bonnet, A., Dion-Blanc, C., Gindraud, F. \& Lemler, S. Neuronal network inference and membrane potential model using multivariate Hawkes processes. {\em Journal Of Neuroscience Methods}. \textbf{372} pp. 109550 (2022), https://www.sciencedirect.com/science/article/pii/S0165027022000772
\bibitem{FPE}Fallahi, M., Pourtaheri, R. \& Eskandari, F. The multivariate generalized linear Hawkes process in high dimensions with applications in neuroscience. {\em Methodol. Comput. Appl. Probab.}. \textbf{26}, Paper No. 1, 25 (2024), https://doi.org/10.1007/s11009-023-10063-w
\bibitem{CS}Crane, R. \& Didier Sornette Robust dynamic classes revealed by measuring the response function of a social system. {\em Proceedings Of The National Academy Of Sciences}. \textbf{105}, 15649-15653 (2008), https://www.pnas.org/doi/abs/10.1073/pnas.0803685105
\bibitem{BBGM}Bacry, E., Bompaire, M., Gaïffas, S. \& Muzy, J. Sparse and low-rank multivariate Hawkes processes. {\em J. Mach. Learn. Res.}. \textbf{21} pp. Paper No. 50, 32 (2020)
\bibitem{BJM}Bacry, E., Jaisson, T. \& Muzy, J. Estimation of slowly decreasing Hawkes kernels: application to high-frequency order book dynamics. {\em Quant. Finance}. \textbf{16}, 1179-1201 (2016), https://doi.org/10.1080/14697688.2015.1123287
\bibitem{ELL}Embrechts, P., Liniger, T. \& Lin, L. Multivariate Hawkes processes: an application to financial data. {\em J. Appl. Probab.}. \textbf{48A} pp. 367-378 (2011), https://doi.org/10.1239/jap/1318940477
\bibitem{RRT}Reynaud-Bouret, P., Rivoirard, V. \& Tuleau-Malot, C. Inference of functional connectivity in Neurosciences via Hawkes processes. {\em 2013 IEEE Global Conference On Signal And Information Processing}. pp. 317-320 (2013)
\bibitem{Latour}Latour, A. The multivariate GINAR(p) process. {\em Adv. In Appl. Probab.}. \textbf{29}, 228-248 (1997), https://doi.org/10.2307/1427868
\bibitem{Kirchner_multi}Kirchner, M. An estimation procedure for the Hawkes process. {\em Quant. Finance}. \textbf{17}, 571-595 (2017), https://doi.org/10.1080/14697688.2016.1211312
\bibitem{Fokianos_multi}Fokianos, K. Multivariate count time series modelling. {\em Econom. Stat.}. \textbf{31} pp. 100-116 (2024), https://doi.org/10.1016/j.ecosta.2021.11.006
\bibitem{BG}Bollerslev, T. \& Ghysels, E. Periodic Autoregressive Conditional Heteroscedasticity. {\em Journal Of Business \& Economic Statistics}. \textbf{14}, 139-151 (1996), http://www.jstor.org/stable/1392425
\bibitem{GARDNER2006639}Gardner, W., Napolitano, A. \& Paura, L. Cyclostationarity: Half a century of research. {\em Signal Processing}. \textbf{86}, 639-697 (2006), https://www.sciencedirect.com/science/article/pii/S0165168405002409
\bibitem{BoylesGardner}Boyles, R. \& Gardner, W. Cycloergodic properties of discrete- parameter nonstationary stochastic processes. {\em IEEE Transactions On Information Theory}. \textbf{29}, 105-114 (1983)
\bibitem{FP}Franses, P. \& Paap, R. Periodic time series models. (Oxford University Press, Oxford,2004), https://doi.org/10.1093/019924202X.001.0001
\bibitem{RZ}Regnard, N. \& Zakoïan, J. A conditionally heteroskedastic model with time-varying coefficients for daily gas spot prices. {\em Energy Economics}. \textbf{33}, 1240-1251 (2011), https://www.sciencedirect.com/science/article/pii/S0140988311000405
\bibitem{BB}Bentarzi, M. \& Bentarzi, W. Periodic integer-valued GARCH(1, 1) model. {\em Comm. Statist. Simulation Comput.}. \textbf{46}, 1167-1188 (2017), https://doi.org/10.1080/03610918.2014.994780
\bibitem{SPS}Santos, C., Pereira, I. \& Scotto, M. On the theory of periodic multivariate INAR processes. {\em Statist. Papers}. \textbf{62}, 1291-1348 (2021), https://doi.org/10.1007/s00362-019-01136-5
\bibitem{locherbach}Löcherbach, E. Spiking neurons: interacting Hawkes processes, mean field limits and oscillations. {\em Journées MAS 2016 De La SMAI—Phénomènes Complexes Et Hétérogènes}. \textbf{60} pp. 90-103 (2017), https://doi.org/10.1051/proc/201760090
\bibitem{HBB}Hardiman, S., Bercot, N. \& Bouchaud, J. Critical reflexivity in financial markets: a Hawkes process analysis. {\em EUROPEAN PHYSICAL JOURNAL B}. \textbf{86} (2013,10,28)
\bibitem{EDD}Eichler, M., Dahlhaus, R. \& Dueck, J. Graphical modeling for multivariate Hawkes processes with nonparametric link functions. {\em J. Time Series Anal.}. \textbf{38}, 225-242 (2017), https://doi.org/10.1111/jtsa.12213
\bibitem{SCN}Shlomovich, L., Cohen, E. \& Adams, N. A parameter estimation method for multivariate binned Hawkes processes. {\em Stat. Comput.}. \textbf{32}, Paper No. 98, 13 (2022), https://doi.org/10.1007/s11222-022-10121-2
\bibitem{Veraart2024}Veraart, A. Periodic Trawl Processes: Simulation, Statistical Inference and Applications in Energy Markets. {\em Quantitative Energy Finance: Recent Trends And Developments}. pp. 73-132 (2024), 
\bibitem{almut}Veraart, A. Modeling, simulation and inference for multivariate time series of counts using trawl processes. {\em J. Multivariate Anal.}. \textbf{169} pp. 110-129 (2019), https://doi.org/10.1016/j.jmva.2018.08.012
\bibitem{HL}Herrmann, S. \& Landon, D. Statistics of transitions for Markov chains with periodic forcing. {\em Stoch. Dyn.}. \textbf{15}, 1550022, 30 (2015), https://doi.org/10.1142/S0219493715500227
\bibitem{DM}Diebold, F. \& Mariano, R. Comparing predictive accuracy. {\em J. Bus. Econom. Statist.}. \textbf{20} pp. 134-144 (2002), https://doi.org/10.1198/073500102753410444, Twentieth anniversary commemorative issue
\bibitem{MW}Maillard, P. \& Wintenberger, O. Moment conditions for random coefficient AR($\infty$ ) under non-negativity assumptions. {\em Braz. J. Probab. Stat.}. \textbf{38}, 88-107 (2024), https://doi.org/10.1214/23-bjps594
\bibitem{kaur2024dynamiclatentspacetime}Kaur, H. \& Rastelli, R. A dynamic latent space time series model to assess the spread of mumps in England.  (2024), https://arxiv.org/abs/2411.07749
\bibitem{kaur2024latentspacemodelmultivariate}Kaur, H. \& Rastelli, R. A latent space model for multivariate count data time series analysis.  (2024), https://arxiv.org/abs/2408.13162
\bibitem{JL1978a}Jacobs, P. \& Lewis, P. Discrete Time Series Generated by Mixtures. I: Correlational and Runs Properties. {\em Journal Of The Royal Statistical Society. Series B (Methodological)}. \textbf{40}, 94-105 (1978), http://www.jstor.org/stable/2984870
\bibitem{JL1978b}Jacobs, P. \& Lewis, P. Discrete Time Series Generated by Mixtures II: Asymptotic Properties. {\em Journal Of The Royal Statistical Society. Series B (Methodological)}. \textbf{40}, 222-228 (1978), http://www.jstor.org/stable/2984759
\bibitem{SvH1979}Steutel, F. \& Harn, K. Discrete Analogues of Self-Decomposability and Stability. {\em The Annals Of Probability}. \textbf{7}, 893 - 899 (1979), https://doi.org/10.1214/aop/1176994950

\end{thebibliography}
\end{document}